\newcommand{\comment}[1]{}
\newcommand{\ind}{{\bf 1}}
\def\inddd#1{{\ind}_{\left\{#1\right\}}}
\def\indn#1{\{#1_n\}_{n\in\N}}
\newcommand{\proba}{\mathbb P}
\newcommand{\esp}{{\mathbb E}}
\newcommand{\inv}{^{-1}}
\newcommand{\cov}{{\rm{Cov}}}
\newcommand{\var}{{\rm{Var}}}
\newcommand{\eqnh}{\begin{eqnarray*}}
\newcommand{\eqne}{\end{eqnarray*}}
\newcommand{\eqnhn}{\begin{eqnarray}}
\newcommand{\eqnen}{\end{eqnarray}}
\newcommand{\equh}{\begin{equation}}
\newcommand{\eque}{\end{equation}}
\def\summ#1#2#3{\sum_{#1 = #2}^{#3}}
\def\prodd#1#2#3{\prod_{#1 = #2}^{#3}}
\def\sif#1#2{\sum_{#1=#2}^\infty}
\newcommand{\eqd}{\stackrel{d}{=}}
\def\topp#1{^{(#1)}}
\def\abs#1{\left|#1\right|}
\def\ccbb#1{\left\{#1\right\}}
\def\pp#1{\left(#1\right)}
\def\spp#1{(#1)}
\def\bb#1{\left[#1\right]}
\def\mmid{\;\middle\vert\;}
\def\floor#1{\left\lfloor #1 \right\rfloor}
\def\sfloor#1{\lfloor #1 \rfloor}
\def\vv#1{{\boldsymbol #1}}
\def\vvi{{\boldsymbol i}}
\def\vvj{{\boldsymbol j}}
\def\vvk{{\boldsymbol k}}
\def\vvl{{\boldsymbol \ell}}
\def\vvm{{\boldsymbol m}}
\def\vvn{{\boldsymbol n}}
\def\vvs{{\boldsymbol s}}
\def\vvt{{\boldsymbol t}}
\def\vvH{{\boldsymbol H}}
\def\mand{\mbox{ and }}
\def\qmand{\quad\mbox{ and }\quad}
\def\qmwith{\quad\mbox{ with }\quad}
\def\mfa{\mbox{ for all }}
\def\mmas{\mbox{ as }}
\def\wt#1{\widetilde{#1}}
\def\wb#1{\overline{#1}}
\def\what#1{\widehat{#1}}
\def\limn{\lim_{n\to\infty}}
\def\limm{\lim_{m\to\infty}}
\def\weakto{\Rightarrow}
\def\Z{{\mathbb Z}}
\def\Zd{{\mathbb Z^d}}
\def\R{{\mathbb R}}
\def\N{{\mathbb N}}
\def\Nd{{\mathbb N^d}}
\def\B{{\mathbb B}}
\def\calA{\mathcal A}
\def\calC{\mathcal C}
\def\calF{\mathcal F}
\def\calG{\mathcal G}
\def\calN{\mathcal N}
\def\calP{\mathcal P}
\def\calS{\mathcal S}
\def\calT{\mathcal T}
\def\calY{\mathcal Y}
\def\calZ{\mathcal Z}
\def\topp#1{^{\scriptscriptstyle (#1)}}
\def\X{{\mathbb X}}
\newtheorem{Thm}{Theorem}[section]
\newtheorem{Lem}[Thm]{Lemma}
\newtheorem{Prop}[Thm]{Proposition}
\theoremstyle{definition}
\newtheorem{Rem}[Thm]{Remark}
\numberwithin{equation}{section}
\title[From random partitions to fractional Brownian sheets]{From random partitions  \\to fractional Brownian sheets}
\date{\today}
\author{Olivier Durieu}
\address{
Olivier Durieu\\
Laboratoire de Math\'ematiques et Physique Th\'eorique, UMR-CNRS 7350\\
F\'ed\'eration Denis Poisson, FR-CNRS 2964\\
Universit\'e de Tours, Parc de Grandmont, 37200 Tours, France.
}
\email{olivier.durieu@lmpt.univ-tours.fr}
\author{Yizao Wang}
\address
{
Yizao Wang\\
Department of Mathematical Sciences\\
University of Cincinnati\\
2815 Commons Way\\
Cincinnati, OH, 45221-0025, USA.
}
\email{yizao.wang@uc.edu}
\def\Zt{\mathbb Z^2}
\date{}
\begin{document}\sloppy

\begin{abstract}
We propose discrete random-field models that are based on random partitions of $\N^2$. 
The covariance structure of each random field is determined by the underlying random partition.   Functional central limit theorems are established for the proposed models, and fractional Brownian sheets, with full range of Hurst indices, arise in the limit. 
Our models could be viewed as  discrete analogues of fractional Brownian sheets, in the same spirit that the simple random walk is the discrete analogue of the Brownian motion.  
\end{abstract}
\keywords{Fractional Brownian motion, fractional Brownian sheet, invariance principle, random field, random partition, regular variation, long-range dependence}
\subjclass[2010]{Primary, 60F17, 
 60G22; 
  Secondary, 60C05, 
   60G60 
   }

\maketitle

\section{Introduction}
In this paper, we propose random-field models that are based on random partitions, and show that their partial-sum random fields scale to fractional Brownian sheets. 
Our motivation came from three recent papers, one by \citet{hammond13power} and two by the authors and collaborator \citep{durieu16infinite,bierme17invariance}, where it was shown that fractional Brownian motions and some operator-scaling Gaussian random fields (that can be viewed as random-field generalizations of fractional Brownian motions, see \citep{bierme07operator}) may arise 
as the scaling limits of certain stochastic models, the dependence structure of which is essentially determined by certain random partitions of $\N=\{1,2,\ldots\}$. 
We start by briefly recalling the results in one dimension.

The two papers \citep{hammond13power,durieu16infinite} established functional central limit theorems for fractional Brownian motions based on two different random partitions. 
In each model, there is an underlying random partition of the integers $\{1,\dots,n\}$, and conditioning on the random partition, $\pm1$-valued random spins $X_1,\dots,X_n$ are assigned, in certain ways to be specified later. The advantage of taking random spins is that in this way, the covariances of the partial sums are determined by the underlying random partitions.  By appropriately choosing the random partition and the assignment rule of random spins, the partial sum $S_n = X_1+\cdots+X_n$ scales to a fractional Brownian motion as $n\to\infty$ in the form of
\equh\label{eq:fBm}
\ccbb{\frac{S_{\floor {nt}}}{n^HL(n)}}_{t\in[0,1]}\weakto \ccbb{\B^H_t}_{t\in[0,1]}
\eque
in $D([0,1])$ as $n\to\infty$,  
where
$L$ is a slowly varying function at infinity and 
 $\B^H$ on the right-hand side above denotes the fractional Brownian motion with Hurst index $H\in(0,1)$, a centered Gaussian process with covariance function
\[
\cov(\B^H_s,\B^H_t) = \frac12\pp{t^{2H}+s^{2H}-|t-s|^{2H}},\quad s,t\ge 0.
\]Throughout, $\weakto$ stands for convergence in distribution and $D([0,1])$ for the space of c\`adl\`ag functions equipped with the Skorohod topology \citep{billingsley99convergence}.

The models in \citep{hammond13power,durieu16infinite} are different in  both the underlying random partitions and the ways of assigning $\pm1$ spins, and they lead to different ranges of Hurst index: $H\in(0,1/2)$ in \citep{durieu16infinite} and $H\in(1/2,1)$ in \citep{hammond13power}.
The partial sum $S_n$ can be interpreted as a correlated random walk and provides a simple discrete counterpart to the fractional Brownian motion, in the same spirit that the simple random walk can be viewed as the discrete counterpart of the Brownian motion.

In view of the non-standard normalization $n^HL(n)$ in \eqref{eq:fBm} instead of $n^{1/2}$ for $S_n$ of the i.i.d.~random variables, such models are  sometimes referred to as having {\em long-range dependence} \citep{pipiras17long,samorodnitsky17stochastic}. 
Moreover,  the fractional Brownian motion in the limit characterizes the non-negligible dependence at macroscopic scale of the discrete model when $H\ne 1/2$ (recall that $\B^{1/2}$ is a standard Brownian motion). 
Such limit theorems are of special interest for the study of long-range dependence, as they often reveal different types of dynamics underlying certain common long-range dependence phenomena. Namely,  drastically different models may lead to the same stochastic process with long-range dependence, and fractional Brownian motions often show up in such limit theorems. 
Fractional Brownian motions, first considered by \citet{kolmogorov40wienersche} and studied rigorously by \citet{mandelbrot68fractional}, are arguably the most important stochastic processes in the investigation of long-range dependence: it is well known now that fractional Brownian motions arise in limit theorems on models from various areas, including finance \citep{kluppelberg04fractional}, telecommunications \citep{mikosch07scaling}, interacting particle systems \citep{peligrad08fractional}, 
aggregation of correlated random walks \citep{enriquez04simple}, just to mention a few. 
\medskip

Results in \citep{hammond13power,durieu16infinite} provide a new class of examples for long-range dependence: they may arise in the presence of certain random partitions. Such a point of view, to the best of our knowledge, has been rarely explored before.   Our motivation is to demonstrate that
the random-partition mechanism behind the long-range dependence phenomena in the aforementioned papers remains at work in a natural random-field setup. 
The generalization of aforementioned one-dimensional random partitions to high dimensions, however, is far from being unique. A first attempt has been successfully worked out in \citep{bierme17invariance}, where certain operator-scaling Gaussian random fields appear in the limit (see Remark \ref{rem:HS}).

Here we continue to explore other possibilities of random-field extensions. 
In particular, we shall propose three random-field extensions of the one-dimensional models, and show that the partial sums of proposed models scale to fractional Brownian sheets. Our limit theorems shall cover the full range of Hurst indices for the fractional Brownian sheets. 
This is in sharp contrast to the previous random-field model investigated earlier in \citep{bierme17invariance}, where the limit random fields are most of the time not fractional Brownian sheets.
This reflects the fact that the random partitions considered here are essentially different from the ones considered in \citep{bierme17invariance}, and hence  our models and limit theorems here complement the ones therein. 

Recall that a fractional Brownian sheet with Hurst index $\vvH = (H_1,H_2) \in(0,1)^2$ is a multi-parameter zero-mean Gaussian process $\{\B^\vvH_\vvt\}_{\vv t\in\R^2_+}$
 with covariance 
\[
\cov(\B^\vvH_\vvs,\B^\vvH_\vvt) = \prodd q12\frac12\pp{t_q^{2H_q} + s_q^{2H_q} - |t_q-s_q|^{2H_q}},\quad \vvt,\vvs\in\R^2_+.
\]
Fractional Brownian sheets are random-field generalizations of fractional Brownian motions proposed by \citet{kamont96fractional}. 
These are centered Gaussian processes that are operator-scaling (generalization of self-similarity to random fields, see e.g.~\citep{bierme07operator}) and with stationary rectangular increments. In the special case $H_1 =H_2 = 1/2$, the fractional Brownian sheet becomes the standard Brownian sheet, the random-field generalization of Brownian motion. For other Hurst indices,  fractional Brownian sheets exhibit anisotropic long-range dependence.  
Representation and path properties of these random fields have been extensively investigated. See for example the recent survey by \citet{xiao09sample}. Stochastic partial differential equations driven by fractional Brownian sheets have also been studied (see e.g.~\citep{hu00stochastic,oksendal01multiparameter}). 
At the same time, fractional Brownian sheets are not the only operator-scaling random fields with stationary rectangular increments. There are other random fields with long-range dependence which could also be viewed as generalization of fractional Brownian motions. Limit theorems for fractional Brownian sheets and other Gaussian random fields with long-range dependence, however, have not been as much developed as for fractional Brownian motions. 
Recent developments in this direction include for examples limit theorems for linear random fields \citep{lavancier07invariance,wang14invariance}, for set-indexed fields \citep{bierme14invariance}, and for aggregated models \citep{puplinskaite16aggregation,shen17operator}.

\medskip

Now we describe our models, which are extensions of the one-dimensional models in \citep{hammond13power,durieu16infinite} to two dimensions, in more details.
For these one-dimensional models, we first introduce a random partition of $\N$ into different components, where each component may have possibly an infinite number of elements. Next, given a random decomposition $\{\calC_k\}_{k\in\N}$ of $\N$, 
for each component $\calC$ we sample $X_\calC = \{X_i\}_{i\in\calC}$ according to a specific {\it assignment rule}, applied in an independent manner to all components $\{X_{\calC_k}\}_{k\in\N}$. For these models, each $X_i$ takes the values $\pm1$ only. We consider two possible assignment rules:
\medskip

\noindent {\it Identical assignment rule.} Assign the same values for all $X_i$ in the same component. The identical value is either $1$ or $-1$, with equal probabilities. 
\medskip

\noindent {\it Alternating assignment rule.} Assign $\pm1$ values in an alternating manner with respect to the natural order on $\N$, for $X_i$ in the same component. Given a component, there are two such ways of assigning $\pm1$ values, and one of them is chosen with probability $1/2$. For example, given a component $\calC = \{1,2,5\}$, the alternating assignment rule assigns $(1,-1,1)$ or $(-1,1,-1)$ with equal probabilities to $(X_1,X_2,X_5)$.
\medskip

In particular, the Hammond--Sheffield model in \citep{hammond13power} is based on a random partition of $\N$ induced by a random forest with infinite components, each being an infinite tree, and the identical assignment rule (the random forest induces actually a random partition of $\Z$). The model in \citep{durieu16infinite} is based on an exchangeable random partition on $\N$ \citep{pitman06combinatorial} induced  by a certain infinite urn scheme and the alternating assignment rule. It is a modification of a model originally investigated in \citet{karlin67central}, and hence we refer to the model as the {\em randomized Karlin model}. The two models will be recalled in full detail in later sections. Note that this framework of building stationary sequences based on random partitions and assignment rules also includes the example of independent $\pm1$ spins, of which the partial-sum process is well known to scale to a Brownian motion. To achieve this it suffices to take the finest partition of $\N$, that is, each component corresponds exactly to one element from $\N$, and then apply either assignment rule (the two are the same in this case).
\medskip

Our random-field models are based on random partitions of $\N^2$ obtained as the product of two independent random partitions of $\N$. Namely,
let $\calC\topp q = \{\calC_i\topp q\}_{i\in\N}, q=1,2$, be two partitions of $\N$. Let $\vv\calC = \calC\topp 1\times\calC\topp2$ denote the partition of $\N^2$ whose components are the Cartesian products $\calC_i\topp1\times\calC_j\topp2$ for all $i,j\in\N$ (e.g.~$\{1,2\}\times\{1,3\} = \{(1,1),(1,3),(2,1),(2,3)\}$ is a subset of $\N^2$).
Once the random partition is given, one of the two assignment rules is applied in each direction. Figure~\ref{fig:1} illustrates the product of partitions (left), the alternating assignment rule (middle), and an assignment rule of mixed type (right).

\begin{figure}[ht]
\includegraphics[width = 0.3\textwidth]{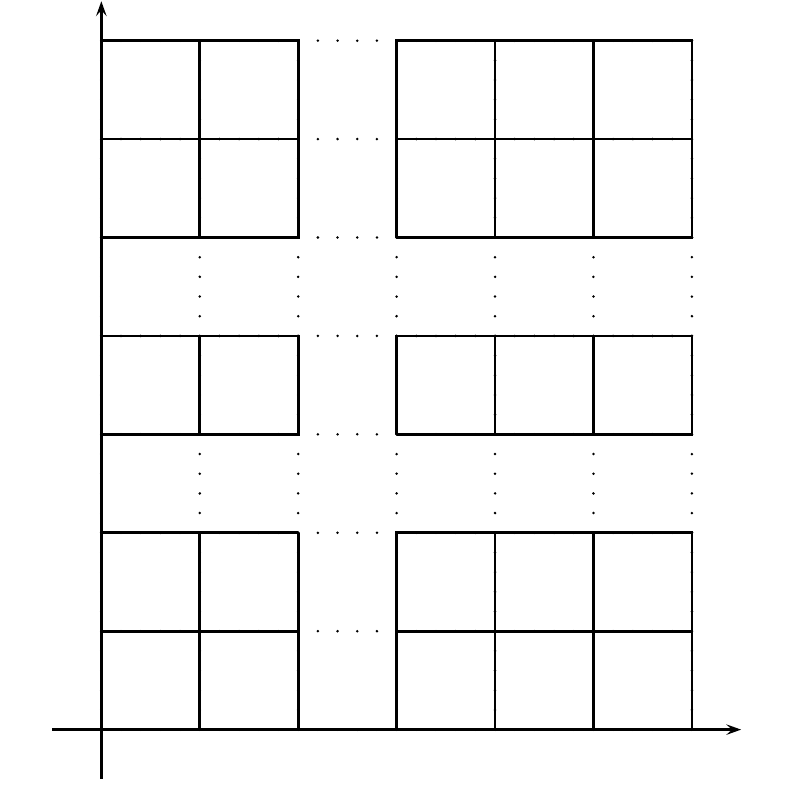}
\includegraphics[width = 0.3\textwidth]{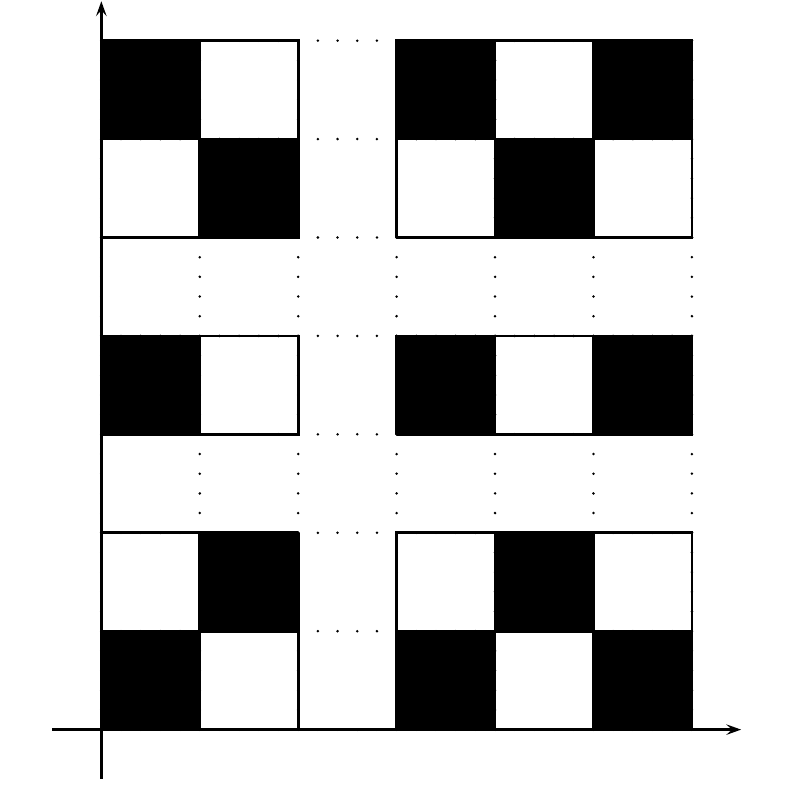}
\includegraphics[width = 0.3\textwidth]{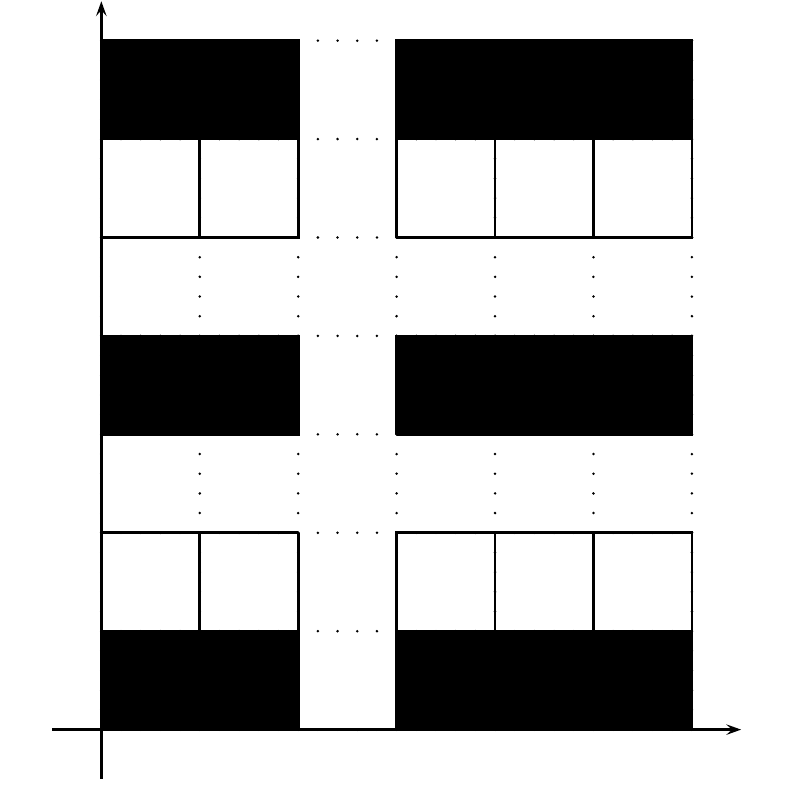}
\caption{\label{fig:1}A component from a product of partitions. Left: component $\{1,2,4,5,6\}\times\{1,2,4,6,7\}$. Middle: alternating assignment rule of $1$ (black) and $-1$ (white) values. Right: mixture of identical assignment rule in horizontal direction and alternating assignment rule in vertical direction.}
\end{figure}

We shall investigate the partial-sum random fields of three $\pm1$-valued models, each converging to fractional Brownian sheets in a different regime  in terms of the Hurst indices. The contributions of the paper are summarized here. 
\medskip

\noindent (i) In Section~\ref{sec:Karlin} we propose a generalization of the randomized Karlin model and show that the partial-sum random field scales to a fractional Brownian sheet with $\vv H\in(0,1/2)^2$. 
\medskip

\noindent (ii) In Section~\ref{sec:HS} we propose a generalization of the Hammond--Sheffield model and show that the partial-sum random field scales to a fractional Brownian sheet with $\vv H\in(1/2,1)^2$. 
\medskip
 
\noindent (iii) In Section~\ref{sec:combined} we propose a model that can be viewed as a combination of the  Hammond--Sheffield model and the randomized Karlin model, and show that the partial-sum random field scales to a  fractional Brownian sheet with $\vv H\in(1/2,1)\times(0,1/2)$.
\medskip

More specifically, our main results Theorems~\ref{thm:1},~\ref{thm:HSproduct}, and~\ref{thm:WIPcombined} are limit theorems in the form of
\[
\frac1{Z_\vvH(\vvn)}\ccbb{\sum_{\vvi \in [\vv1,\floor{\vvn\cdot \vvt}]}X_\vvi}_{\vvt\in[0,1]^2} \weakto 
\ccbb{\B_\vvt^\vvH}_{\vvt\in[0,1]^2}
\]
in $D([0,1]^2)$ as  $\min(n_1,n_2)\to\infty$, where $Z_\vvH(\vvn)$ is an appropriate normalization depending on the model (and hence on $\vvH$).
Our models can thus be viewed as discrete counterparts of fractional Brownian sheets.
Throughout, for any element $\vv a\in\R^2$, we write $\vv a = (a_1,a_2)$ and for any $\vv a, \vv b \in\Z^2$, we write $[\vv a,\vv b] = ([a_1,b_1]\times[a_2,b_2])\cap\Z^2$ the set of points in the rectangle $[a_1,b_1]\times[a_2,b_2]$ with integer coordinates.
We also use the notation $\floor{\vvn\cdot\vvt} = (\floor{n_1t_1},\floor{n_2t_2})$ where $\floor{\cdot}$ stands for the integer part.

\bigskip

The proofs for the two-dimensional randomized Karlin model and the other two models are completely different. For the randomized Karlin model, conditioning on the partition structure, the partial sums become sums of i.i.d.~random variables. 
For the other two models, the proof is based on a martingale central limit theorem due to \citet{mcleish74dependent}, already used for the one-dimensional Hammond--Sheffield model.
However, the proofs for two-dimensional models are much more demanding than their one-dimensional counterpart, as in general,  the martingale central limit theorem does not work as well for random fields as for stationary sequences, 
as pointed out by \citet{bolthausen82central} already in the 80s. Indeed, for the one-dimensional Hammond--Sheffield model, the normalized partial sum $S_n = X_1+\cdots+X_n$ 
can be expressed as
\[
\frac{S_n}{b_n} = \frac1{b_n}\sum_{j\in\Z}b_{n,j} X_j^*
\]
for a stationary sequence of martingale differences $\{X_j^*\}_{j\in\Z}$ and some coefficients $b_{n,j}$, with $b_n^2 = \sum_{j\in\Z}b_{n,j}^2$. 
This is a remarkable representation at the heart of the proof; see \citep{bierme17invariance} (the proof in \citep{hammond13power} did not use directly this convenient presentation, but applied nevertheless an argument by martingale approximation).
Then, to prove the weak convergence, by McLeish's central limit theorem, the key step is to show
\equh\label{eq:ergodicity1d}
\limn \frac1{b_n^2}\sum_{j\in\Z}b_{n,j}^2(X_i^*)^2 = \var (X_0^*)\mbox{ in probability}.
\eque
This requires already an involved argument in one dimension; see \citep[Lemma 3.2]{hammond13power} and \citep[Lemma 7]{bierme17invariance}. 

In two dimensions, the situation becomes even more complicated as now the partial sum $S_\vvn = \sum_{\vvi\in[\vv1,\vvn]}X_\vvi$ is expressed (Proposition~\ref{prop:representation} below) as 
\[
\frac{ S_{\vvn}}{b_\vvn} = \frac1{b_{n_1}\topp1}\sum_{j\in\Z}b_{n_1,j}\topp1 U_{j,n_2},
\]
where $\{U_{j,n_2}\}_{j\in\Z}$ is a stationary martingale-difference sequence with respect to the filtration corresponding to the first direction. 
The new difficulty of the random-field models comes from the fact that the martingale differences now also depend on $n_2$ and the dependence structure of the random partition in the second direction.  
To overcome the new difficulty, at the core of our proofs for the counterpart of \eqref{eq:ergodicity1d} is a decoupling argument. See Section \ref{sec:HSHS_CLT}.

\bigskip

We conclude the introduction with a few remarks.

\begin{Rem}
If one searches for a similar random field that scales to a fractional Brownian sheet with Hurst index $1/2$ in one direction, one can modify the proposed model by taking instead the finest partition (each integer consists of a component) in that  direction. Such a model and its analysis are much easier. The details are omitted. 
\end{Rem}

\begin{Rem}
It will become clear that our constructions are not limited to two dimensions only. Our limit theorems could also be extended accordingly to high dimensions, where the limit random fields cover fractional Brownian sheets with all legitimate indices. 
For high-dimensional models and the corresponding limit theorems, the analysis can be carried out by an induction argument, but will be notationally heavy.
Therefore, in this paper we focus on two dimensions, and only discuss the high dimension in Remark \ref{rem:high_dimension} at the end.
\end{Rem}

\begin{Rem}Our application of martingale central limit theorem is of a different nature from and much more complicated than the one for the other random-field extension in \citep{bierme17invariance} (see Remark~\ref{rem:HS}). 
There the partial sum can be expressed as a linear random field in the form of
\equh\label{eq:HS}
\frac{S_\vvn}{b_\vvn} = \frac1{b_\vvn}\sum_{\vvj\in\Z^d}b_{\vvn,\vvj}X_\vvj^*,\quad\vvn\in\Nd,
\eque
with a stationary sequence of martingale differences $\{X_\vvj^*\}_{\vvj\in\Zd}$ {\em in the lexicographical order} for all $d\in\N$, whence the analysis becomes {\em dimension-free}.  To the best of our knowledge, the model in \citep{bierme17invariance} is one of the very rare examples in the literature where a one-dimensional sequence of stationary martingale differences can be elegantly embedded into the presentation of the partial sums of a high-dimensional random field. In general,  embedding with respect to the lexicographical order could be formidable \citep{dedecker01exponential}, and the simple representation \eqref{eq:HS} seems rather a coincidence. 
\end{Rem}

\subsection*{Acknowledgements} 
The authors would like to thank two anonymous referees for their careful reading and helpful comments.
YW's research was partially supported by the NSA grants H98230-14-1-0318 and H98230-16-1-0322, the ARO grant W911NF-17-1-0006, and Charles Phelps Taft Research Center at University of Cincinnati.


\section{Randomized Karlin model}\label{sec:Karlin}
In this section we introduce the two-dimensional randomized Karlin model, and show that the partial-sum random field scales to a fractional Brownian sheet with Hurst index $\vvH\in(0,1/2)^2$.

\subsection{One-dimensional model}\label{sec:KarlinOne}
We first recall the one-dimensional randomized Karlin model \citep{karlin67central,durieu16infinite}. 
Let $\indn Y$ be i.i.d.~random variables with common distribution $\mu$ on $\N$. They induce a partition $\Pi_\infty$ of $\N$ by setting in the same equivalent class (component), denoted by $i\sim j$, if and only if $Y_i = Y_j$. 
Intuitively, imagine that we throw balls consecutively and independently into boxes labeled by $\N$, and set $Y_n = \ell$ if the $n$-th ball falls into the box with label $\ell$. This event occurs with probability $p_\ell=\mu(\{\ell\})$, and $i\sim j$ if and only if the balls at round $i$ and $j$ fall in the same box.
The partition obtained this way is an infinite exchangeable random partition of $\N$, sometimes referred to as the partition generated by random samplings, or the paintbox partition \citep{pitman06combinatorial}. 
Many estimates of this random partition that we apply here can be found in \citep{karlin67central,gnedin07notes}. 

Throughout, we assume that $\{p_\ell\}_{\ell\in\N}$ is a non-increasing sequence 
(this can always be assumed because the attached value of each label is irrelevant)
and that for some $\alpha\in(0,1)$,
\equh\label{eq:RV}
\nu(x)=\max\{\ell \ge 1 : p_\ell \ge 1/x\}\sim x^\alpha L(x),\mmas x\to\infty,
\eque
where $L$ is a slowly varying function at $\infty$. Without loss of generality $L$ is assumed to be bounded away from $0$ and $\infty$ on every compact set in $(0,\infty)$.
For example, the condition \eqref{eq:RV} is satisfied (with $L\equiv 1$) when
\[
p_k  \sim k^{-1/\alpha}, \mmas k\to\infty.
\]

The law of $(X_1,\dots,X_n)$ given the partition $\Pi_n$ of the set $\{1,\dots,n\}$ induced by $Y_1,\dots,Y_n$ is then determined by the alternating assignment rule. 
To express the alternating assignment rule explicitly, we introduce  
\equh\label{eq:Ynl}
Y_{n,\ell} = \summ i1n \inddd{Y_i = \ell},\;\ell \in \N,
\eque
 representing the number of balls in the box $\ell$ after the first $n$ sampling.
Then, the law of $(X_1,\dots,X_n)$ given the partition can be equivalently determined by letting $\indn\epsilon$ be i.i.d.~random variables, independent of $\Pi_\infty$, with common distribution $\proba(\epsilon_1 = -1) =\proba(\epsilon_1 = 1) = 1/2$ and setting for each $n\in\N$,
\equh\label{def:KarlinX_n}
X_n = \epsilon_\ell(-1)^{Y_{n,\ell}+1},\; \mbox{ if } Y_n = \ell.
\eque
Originally, \citet{karlin67central} obtained a central limit theorem for the non-randomized model, with $\epsilon_\ell \equiv 1$. The functional central limit theorem of the partial-sum process $S_n=X_1+\cdots+X_n$ 
was established in \citep{durieu16infinite}.

Later in Section~\ref{sec:combined}, we shall need a functional central limit theorem for a slightly more general version of  the one-dimensional Karlin model. 
We say that $\{X_n\}_{n\in\N}$ is a {\em generalized one-dimensional Karlin model}, if instead of \eqref{def:KarlinX_n} we have
\[
X_n = Z_\ell\epsilon_\ell(-1)^{Y_{n,\ell}+1},\; \mbox{ if } Y_n = \ell,
\]
for i.i.d.~random variables $\{Z_\ell\}_{\ell\in\N}$ with some common distribution $\nu$, independent from $Y$ and $\epsilon$. In this way, given the partition induced by $\{Y_n\}_{n\in\N}$, $X_i = X_j$ if $i\sim j$, and otherwise $X_i$ and $X_j$ are independent and identically distributed as $Z_1\epsilon_1$. 
\begin{Prop}\label{prop:Karlin1d}
For the generalized one-dimensional randomized Karlin model with $\mu$ satisfying~\eqref{eq:RV} with $\alpha\in(0,1)$ and a slowly varying function $L$, for a distribution $\nu$ with bounded support, we have
\[
\ccbb{\frac{S_{\floor {nt}}}{n^{\alpha/2} L(n)^{1/2}}}_{t\in[0,1]}\weakto \sigma_{\alpha}\ccbb{\B_{t}^{\alpha/2}}_{t\in[0,1]}
\]
in $D([0,1])$ as $n\to\infty$, with 
\[
\sigma_{\alpha}^2 =  \Gamma(1-\alpha)2^{\alpha-1}\var(X_1).
\]
\end{Prop}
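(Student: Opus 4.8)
The plan is to reduce the functional central limit theorem to two classical ingredients: convergence of finite-dimensional distributions via a conditioning argument, and tightness in $D([0,1])$. The crucial observation (already used in \citep{durieu16infinite}) is that, conditionally on the partition $\Pi_\infty$ generated by $\{Y_n\}_{n\in\N}$, the partial sum $S_n$ becomes a sum of {\em independent} random variables. Indeed, writing $K_n = \#\{\ell : Y_{n,\ell}\ge 1\}$ for the number of occupied boxes after $n$ samplings and recalling that under the alternating assignment rule the contribution of box $\ell$ to $S_n$ is $Z_\ell\epsilon_\ell$ times either $0$ or $1$ depending on the parity of $Y_{n,\ell}$, one obtains a representation
\[
S_n = \sum_{\ell\in\N} Z_\ell\epsilon_\ell\, \indd{Y_{n,\ell}\odd},
\]
so that conditionally on $\{Y_n\}_{n\in\N}$ the summands $Z_\ell\epsilon_\ell\indd{Y_{n,\ell}\odd}$ are independent across $\ell$, each symmetric with conditional variance $\var(X_1)\indd{Y_{n,\ell}\odd}$. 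Hence the conditional variance of $S_{\floor{nt}}$ is $\var(X_1)$ times $K_{\floor{nt}}^{\mathrm{odd}} \defe \#\{\ell: Y_{\floor{nt},\ell}\text{ odd}\}$, the number of odd-occupancy boxes.

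First I would establish the key deterministic-after-conditioning limit: for fixed $t$,
\[
\frac{K_{\floor{nt}}^{\mathrm{odd}}}{n^\alpha L(n)} \Pto \Gamma(1-\alpha)2^{\alpha-1} t^\alpha,
\]
and more generally the joint convergence of the vector $(K_{\floor{nt_1}}^{\mathrm{odd}},\dots,K_{\floor{nt_m}}^{\mathrm{odd}})$ appropriately normalized, with the covariance structure matching that of $\sigma_\alpha^2\B^{\alpha/2}$. This is exactly the type of estimate available from \citep{karlin67central,gnedin07notes}: the expectation of the number of odd boxes after $n$ draws is $\sum_\ell \proba(Y_{n,\ell}\text{ odd})$, which by a Poissonization argument and Karamata's Tauberian theorem behaves like $\Gamma(1-\alpha)2^{\alpha-1}n^\alpha L(n)$ (the factor $2^{\alpha-1}$ coming from $\sum_{k\ge 1, k\odd}$ weights in the Poisson expansion, $\tfrac12(1-e^{-2\lambda})$ versus $1-e^{-\lambda}$), and the variance is of smaller order, giving the $L^2$ (hence in-probability) convergence. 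For the increments over disjoint blocks one uses that occupancy counts in disjoint sampling windows are asymptotically governed by independent contributions, yielding the bilinear covariance $\prod$-free structure — but since we are in one dimension here, simply the fractional-Brownian covariance $\tfrac12(t^\alpha+s^\alpha-|t-s|^\alpha)$ after the normalization.

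Next, conditionally on $\Pi_\infty$, apply the Lindeberg–Feller central limit theorem to $S_{\floor{nt}}/(n^{\alpha/2}L(n)^{1/2})$: the summands are uniformly bounded (since $\nu$ has bounded support, $|Z_\ell\epsilon_\ell|\le C$), so the Lindeberg condition is immediate once the normalization $\to\infty$, which holds on the event $K_{\floor{nt}}^{\mathrm{odd}}\to\infty$ (a probability-one event). This gives that, conditionally, the finite-dimensional distributions converge to a Gaussian vector whose (random) covariance is the a.s.\ limit of the normalized $K^{\mathrm{odd}}$ counts; by the previous paragraph this limit is deterministic, so by dominated convergence the unconditional finite-dimensional distributions converge to those of $\sigma_\alpha\B^{\alpha/2}$. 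For tightness in $D([0,1])$ I would use the standard moment criterion: estimate $\esp|S_{\floor{nt}}-S_{\floor{ns}}|^4$ by first conditioning (where one gets $3\var(X_1)^2 (\esp[\,\cdot\,|\,\Pi])^2$ plus a fourth-cumulant term, both controlled by powers of the odd-occupancy count of the sampling window $(ns, nt]$) and then using the moment bounds on increments of $K^{\mathrm{odd}}$ — these are again in \citep{karlin67central,gnedin07notes,durieu16infinite} — to obtain a bound of order $(|t-s| + 1/n)^{2\alpha}$, which suffices since $2\alpha$ can be small; when $2\alpha \le 1$ one instead invokes the version of the tightness criterion adapted to this regime (as done in \citep{durieu16infinite}), or uses that $S_n$ is a partial sum so the chaining can be done blockwise. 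The main obstacle is the tightness argument in the regime of small $\alpha$, where the exponent $\alpha/2$ is far below $1/2$ and the usual Billingsley-type criterion with a single increment is delicate; I expect to borrow the precise fourth-moment occupancy estimates and the tightness scheme essentially verbatim from \citep{durieu16infinite}, checking only that the extra i.i.d.\ factors $Z_\ell$ with bounded support do not affect any bound beyond the replacement of $1$ by $\var(X_1)$ and $\esp Z_1^4$ constants. Everything else — the conditional CLT, the identification of $\sigma_\alpha^2$, and the passage from conditional to unconditional convergence — is routine given the occupancy asymptotics cited in the excerpt.
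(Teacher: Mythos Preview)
Your approach matches the paper's: condition on the partition to reduce $S_n$ to a sum of bounded independent summands, identify the conditional variance with $\var(X_1)$ times the odd-occupancy count, and invoke the occupancy asymptotics from \citep{karlin67central,gnedin07notes} to recover the fractional-Brownian covariance. The one point to sharpen is tightness: rather than work with fourth moments and then face the problematic exponent $2\alpha\le 1$, the paper simply takes $p>2/\alpha$ and applies Burkholder's inequality (the conditionally independent summands are bounded, so all moments are available) to get $\esp|S_m|^p \le C_p\,\esp K_m^{p/2}\le C\, m^{\alpha p/2}L(m)^{p/2}$; after a Potter bound on the slowly varying factor the Bickel--Wichura exponent $\gamma=\alpha p/2-\delta$ exceeds $1$ directly, with no special regime for small $\alpha$.
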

The proof of this result is omitted here, as it can be obtained by following the same strategy as in the proof of Theorem \ref{thm:1}, the functional central limit theorem for two-dimensional randomized Karlin model to be introduced below. 
This result could also follow from a multivariate functional central limit theorem for the one-dimensional randomized Karlin model established in \citep[Theorem 2.2, Corollary 2.8]{durieu16infinite}, where the limit corresponds to
a decomposition of fractional Brownian motion into a bi-fractional Brownian motion and another smooth self-similar Gaussian process due to \citet{lei09decomposition}. In \citep{durieu16infinite}, only the randomized one-dimensional Karlin model was addressed,  although the same proof applies to the generalized model with $\nu$ having bounded support, too. 

\subsection{Two-dimensional model and main result}
We next describe the two-dimensional randomized Karlin model. For each $q=1,2$, consider $Y\topp q = \indn{Y\topp q}$ as i.i.d.~sampling from a certain probability measure $\mu_{q}$ satisfying~\eqref{eq:RV} with $\alpha_q\in(0,1)$ and a slowly varying function $L_q$. Assume that $Y\topp 1,Y\topp2$ are independent. Then, each $Y\topp q$ induces an infinite exchangeable random partition on $\N$ and for each $n$, let $Y\topp q_{n,\ell}$ be the corresponding statistics as in~\eqref{eq:Ynl} before. 
Write $Y_\vvn = (Y\topp1_{n_1}, 
Y\topp2_{n_2})$ and for every pair $\vv n, \vv m\in\N^2$, set
\[
\vv n\sim\vv m, \;\mbox{ if  } Y_\vvn = Y_\vvm.
\]
In this way,  equivalent subclasses (components) of $\N^2$ are indexed by labels $\vvl\in\N^2$. 
This gives the random partition of $\N^2$ as the product of the two partitions determined by $Y\topp 1$ and $Y\topp 2$.
Given the partition induced by $\{Y_\vvi\}_{\vvi\in[\vv1,\vvn]}$, the law of $\{X_{\vvi}\}_{\vv i\in[\vv1,\vvn]}$ is determined by  the alternating assignment rule in both directions. This is equivalent to set, letting $\{\epsilon_{\vvl}\}_{\vvl\in\N^2}$ be i.i.d.~random variables taking values in $\{-1,1\}$ with equal probabilities,
\[
X_{\vv n} =\epsilon_{\vvl}\prodd q12(-1)^{Y_{n_q,\ell_q}\topp q+1}, \;\mbox{ if } Y_{\vv n} = \vvl.
\]
The so-obtained random field $\{X_\vvn\}_{\vvn\in\N^2}$  is referred to as the two-dimensional randomized Karlin model. 

With a little abuse of language, for $\vvn\in\N^2$, we refer to $\{Y_\vvi\}_{\vvi\in[\vv1,\vvn]}$ as the {\it first $\vvn$ samplings}. We write $\vvn\to\infty$ for $\min(n_1,n_2)\to\infty$ and we write $\vvn^{\vv\alpha} = \prod_{q=1}^2 n_q^{\alpha_q}$ and $\vv L(\vv n)=\prod_{q=1}^2 L_q(n_q)$.

The main result of this section is the following.
\begin{Thm}\label{thm:1}
For the two-dimensional randomized Karlin model with $\mu_{q}$ satisfying~\eqref{eq:RV} with $\alpha_q\in(0,1)$ and slowly varying functions $L_q$ for $q=1,2$, we have
\[
\ccbb{\frac{S_{\floor {\vvn\cdot\vv t}}}{|\vvn|^{\vv\alpha/2} \vv L(\vvn)^{1/2}}}_{\vv t\in[0,1]^2}\weakto \sigma_{\vv\alpha}\ccbb{\B_{\vv t}^{\vv\alpha/2}}_{\vv t\in[0,1]^2}
\]
in $D([0,1]^2)$ as $\vvn\to\infty$, with $\sigma_{\vv\alpha}^2 = \prodd q12 \Gamma(1-\alpha_q)2^{\alpha_q-1}$.
\end{Thm}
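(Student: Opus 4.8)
The plan is to follow the standard two-step recipe for functional central limit theorems: establish convergence of finite-dimensional distributions, and then prove tightness in $D([0,1]^2)$. The crucial feature that makes this tractable is that \emph{conditionally on the two random partitions}, the spins $\{X_\vvi\}$ are functions of the i.i.d.\ signs $\{\epsilon_\vvl\}_{\vvl\in\N^2}$ only, so the conditional partial sums $S_{\floor{\vvn\cdot\vvt}}$ become sums of independent (indeed $\pm c$-valued) random variables indexed by the labels appearing among the first $\floor{\vvn\cdot\vvt}$ samplings. Concretely, grouping the summands by their label $\vvl=(\ell_1,\ell_2)$, the contribution of box $\vvl$ to $S_{\floor{\vvn\cdot\vvt}}$ is $\epsilon_\vvl$ times a deterministic $\pm1$-factor times a counting weight equal to the parity-corrected count $\prod_q \frac12(1+(-1)^{Y^{(q)}_{\floor{n_qt_q},\ell_q}+1})$ — i.e.\ the box $\vvl$ contributes $\pm\epsilon_\vvl$ exactly when both $Y^{(1)}_{\floor{n_1t_1},\ell_1}$ and $Y^{(2)}_{\floor{n_2t_2},\ell_2}$ are odd. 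So, conditionally on the partitions, $S_{\floor{\vvn\cdot\vvt}} = \sum_\vvl \pm\epsilon_\vvl \, \mathbf 1\{\text{both marginal counts odd}\}$, a sum of independent bounded terms whose conditional variance is the number $K^*_{\floor{\vvn\cdot\vvt}}$ of boxes with both marginal occupation counts odd.

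The heart of the argument is therefore a law of large numbers for these ``odd-occupancy'' counts. By the product structure of the partition and the independence of $Y^{(1)},Y^{(2)}$, the count $K^*_{\vvm}$ of doubly-odd boxes (at sampling horizon $\vvm=(m_1,m_2)$) has conditional expectation, given one partition, that factorizes; and one shows $\esp K^*_\vvm \sim \prod_{q=1}^2 \Gamma(1-\alpha_q)2^{\alpha_q-1} m_q^{\alpha_q}L_q(m_q)$ using the classical Karlin asymptotics for the number of odd-occupancy boxes in an infinite urn scheme (the one-dimensional statement is exactly the variance computation behind Proposition~\ref{prop:Karlin1d}, with constant $\Gamma(1-\alpha)2^{\alpha-1}$). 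One then needs concentration: $K^*_\vvm/\esp K^*_\vvm \to 1$ in probability. Here I would condition on $Y^{(1)}$, treating $\sum_{\ell_2}\mathbf1\{Y^{(2)}_{m_2,\ell_2}\text{ odd}\}$-type sums for each fixed $\ell_1$, and use a second-moment/martingale bound in the $Y^{(2)}$-randomness, then average over $Y^{(1)}$ — this is the decoupling idea the introduction advertises. Combined with the one-dimensional concentration from the Karlin model, a variance bound of the form $\var(K^*_\vvm)=o((\esp K^*_\vvm)^2)$ follows. Granting this, conditionally on the partitions the normalized partial sum is a triangular array of independent bounded summands with conditional variance converging (after normalization by $|\vvn|^{\vv\alpha}\vv L(\vvn)$) to $\sigma_{\vv\alpha}^2\prod_q\frac12(\floor{n_qt_q}/n_q)^{\alpha_q}\cdot 2 \to \sigma_{\vv\alpha}^2\prod_q t_q^{\alpha_q}$ — wait, more precisely the limiting conditional variance at the single time $\vvt$ is $\sigma_{\vv\alpha}^2\, t_1^{\alpha_1}t_2^{\alpha_2}$, and for a general finite collection of times the joint conditional covariances converge to the fractional-Brownian-sheet covariance by the same odd-occupancy asymptotics applied to unions/intersections of the sampling ranges (the key combinatorial fact being that a box contributes to both $S_\vvt$ and $S_\vvs$ with the \emph{product} of its signed parity-indicators, producing exactly the $\prod_q \frac12(t_q^{2H_q}+s_q^{2H_q}-|t_q-s_q|^{2H_q})$ structure with $H_q=\alpha_q/2$). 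Conditional on the partitions one then applies the Lindeberg–Feller CLT (Lindeberg is immediate since summands are uniformly bounded and the normalization $\to\infty$), obtaining Gaussian finite-dimensional limits with the stated covariance; since the limit is deterministic, this conditional convergence upgrades to unconditional convergence in distribution.

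For tightness in $D([0,1]^2)$ I would use the standard moment criterion for random fields (Bickel–Wichura type): it suffices to bound $\esp\big[\big(\Delta_R S\big)^2 \big(\Delta_{R'} S\big)^2\big]$ for adjacent rectangles $R,R'$ by $C\,(\mu(R)+\mu(R'))^{?}$ with the appropriate exponent, where $\Delta_R S$ is the rectangular increment and $\mu$ the product measure with $\mu([0,s_1]\times[0,s_2])\propto s_1^{2H_1}s_2^{2H_2}$. Because conditionally the increments are independent-summand sums, fourth moments reduce to products of second moments plus a diagonal term, and the second moments are again controlled by expected doubly-odd-box counts over sub-rectangles, for which the Karlin regular-variation estimates give the needed bounds uniformly. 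The rectangular (rather than orthant) increments are what one must handle carefully, but the product structure of both the partition and the fractional Brownian sheet means the two coordinates separate and the one-dimensional Karlin increment moment bounds from \citep{durieu16infinite} can be invoked in each factor.

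The main obstacle I expect is the concentration of the doubly-odd-occupancy count $K^*_\vvm$ — i.e.\ the two-dimensional analogue of \eqref{eq:ergodicity1d}. Unlike the one-dimensional Karlin model where a direct second-moment computation on the number of odd boxes suffices, here the quantity $\sum_{\ell_1,\ell_2}\mathbf1\{Y^{(1)}_{m_1,\ell_1}\text{ odd}\}\mathbf1\{Y^{(2)}_{m_2,\ell_2}\text{ odd}\}$ couples the two independent sampling processes multiplicatively, so one cannot simply cite the marginal estimates; the decoupling step — conditioning on $Y^{(1)}$, applying a conditional variance bound in $Y^{(2)}$ that is itself controlled by a one-dimensional Karlin estimate, and then integrating the fluctuation of the $Y^{(1)}$-side — is where the real work lies, and getting the error terms to be genuinely $o((\esp K^*_\vvm)^2)$ uniformly in the relevant range of $\vvm$ (and over pairs of horizons, for the covariance computation) will require the careful regular-variation bookkeeping that the authors flag as more demanding than the one-dimensional case.
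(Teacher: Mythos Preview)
Your overall strategy---condition on the partitions, apply Lindeberg--Feller to sums of bounded independent summands, then verify tightness via a Bickel--Wichura criterion---is exactly the paper's approach. However, you have misidentified the main difficulty. The doubly-odd-occupancy count you call $K^*_\vvm$ \emph{factorizes exactly}:
\[
\sum_{\ell_1,\ell_2}\indd{Y^{(1)}_{m_1,\ell_1}\text{ odd}}\,\indd{Y^{(2)}_{m_2,\ell_2}\text{ odd}}
=\pp{\sum_{\ell_1}\indd{Y^{(1)}_{m_1,\ell_1}\text{ odd}}}\pp{\sum_{\ell_2}\indd{Y^{(2)}_{m_2,\ell_2}\text{ odd}}},
\]
and the two factors are \emph{independent} because $Y^{(1)}$ and $Y^{(2)}$ are. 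So concentration (and in fact a.s.\ convergence after normalization) follows immediately from the one-dimensional Karlin estimates applied to each factor separately; no decoupling argument is needed. You are importing the difficulty from the Hammond--Sheffield side of the paper (the passage around \eqref{eq:ergodicity1d} and the ``decoupling'' language in the introduction refer to \emph{that} model, not this one). The same factorization handles the conditional covariances: writing $S_\vvn=\sum_{\vvl}\epsilon_\vvl\prod_q c^{(q)}_{\ell_q}(n_q)$ with $c^{(q)}_{\ell_q}(n)=\indd{Y^{(q)}_{n,\ell_q}\text{ odd}}$, one gets $\esp_\calY(S_{\vvn_\vvt}S_{\vvn_\vvs})=\prod_q\sum_{\ell_q}c^{(q)}_{\ell_q}(\floor{n_qt_q})c^{(q)}_{\ell_q}(\floor{n_qs_q})$, i.e.\ a product of one-dimensional conditional covariances. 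The paper packages this via an auxiliary product-sign model and a polarization identity (its Lemma~2.6), but the content is the same factorization you would reach directly.

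A secondary issue: your tightness sketch relies on fourth moments. Since the Hurst indices here are $H_q=\alpha_q/2<1/2$, second/fourth moments give exponents $\alpha_q$ and $2\alpha_q$ respectively, which need not exceed $1$ when $\alpha_q<1/2$. The paper instead takes $p>\max(2/\alpha_1,2/\alpha_2)$, uses Burkholder's inequality on the conditional i.i.d.\ sum to get $\esp|S_\vvm|^p\le C_p\,\esp(\wt K_\vvm)^{p/2}$, and then bounds $\esp(\wt K_\vvm)^{p/2}$ via the factorization and the $L^p$ control in its Lemma~2.3. You should adopt this higher-moment route; the fourth-moment version will not close for small $\alpha_q$.
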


\subsection{Auxiliary estimates}
Here we provide some useful estimates on the one-dimensional randomized Karlin model that we shall use in the proof of Theorem~\ref{thm:1}.
Recall $Y_{n,\ell}$ in~\eqref{eq:Ynl} and let
\[
K_n = \summ \ell1\infty \inddd{Y_{n,\ell}>0}\qmand
K_{n,r}  =  \summ \ell1\infty \inddd{Y_{n,\ell} = r},\;\mbox{ for all } r\in\N,
\]
denote the number of occupied boxes and number of boxes occupied with $r$ balls, respectively, after $n$ samplings. The statistics of $K_n$ and $K_{n,r}$ (independent from $\epsilon_\ell$) have been studied in \citep{karlin67central} already, where a similar model with $\epsilon_\ell$ replaced by constant $1$ was investigated.
We summarize some results on $K_n$ and $K_{n,r}$ below that will be needed later.  In the sequel, $\Gamma$ denotes the gamma function and for $r\ge1$ and $\alpha\in(0,1)$, we write
\[
p_\alpha(r) = \frac{\alpha(1-\alpha)\cdots(r-1-\alpha)}{r!}.
\]
Observe that $\sif r1p_\alpha(r) = 1$ and $\sif r1p_\alpha(2r-1) = 2^{\alpha-1}$.

\begin{Lem}\label{lem:K}
Under the assumption~\eqref{eq:RV}, we have
\eqnhn
\limn\frac{K_n}{n^\alpha L(n)} & = & \Gamma(1-\alpha),\label{eq:Kn}\\
\limn\frac{K_{n,r}}{n^\alpha L(n)} & = & \Gamma(1-\alpha)p_\alpha(r),\label{eq:Knr} \\
\limn\frac{\sif r1 K_{n,2r-1}}{n^\alpha L(n)} & = & \Gamma(1-\alpha) 2^{\alpha-1},\label{eq:Knodd}
\eqnen
where the convergences hold almost surely and also in $L^p$ for all $p>0$. 
\end{Lem}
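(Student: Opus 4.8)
The plan is to realize all three limits as consequences of the classical Karlin occupancy asymptotics, combined with Borel--Cantelli arguments to upgrade convergence in probability (and moments) to almost sure convergence. First I would record the expectations. Writing $\pi_\ell(n) = 1 - (1-p_\ell)^n$ for the probability that box $\ell$ is occupied after $n$ draws, one has $\esp K_n = \sif \ell1 \pi_\ell(n)$ and, similarly, $\esp K_{n,r} = \sif \ell1 \binom nr p_\ell^r(1-p_\ell)^{n-r}$. The point, going back to \citet{karlin67central}, is that under the regular-variation assumption~\eqref{eq:RV} these sums can be evaluated asymptotically by converting them into integrals against the measure $d\nu$ (equivalently, by a Tauberian/Abelian argument): one gets $\esp K_n \sim \Gamma(1-\alpha) n^\alpha L(n)$ and $\esp K_{n,r} \sim \Gamma(1-\alpha)p_\alpha(r) n^\alpha L(n)$, where the constant $p_\alpha(r)$ comes from $\frac1{r!}\int_0^\infty x^r e^{-x} \alpha x^{-\alpha-1}\,dx = \alpha\Gamma(r-\alpha)/r! = p_\alpha(r)$ after the substitution matching $p_\ell^r(1-p_\ell)^{n-r} \approx (np_\ell)^r e^{-np_\ell}/r!$. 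These are precisely the estimates the excerpt says may be found in \citep{karlin67central,gnedin07notes}, so I would cite them rather than reprove them.

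Next I would handle the variances. The key structural fact is negative association: $K_n = \sif\ell1 \indd{Y_{n,\ell}>0}$ is a sum of indicators of events that are monotone in disjoint blocks of the i.i.d.\ sequence, and a standard computation gives $\var K_n = \sif\ell1 \pi_\ell(n)(1-\pi_\ell(n)) - \sum_{\ell\ne\ell'} \mathrm{Cov} \le \esp K_n$ (in fact the covariances are nonpositive), and likewise $\var K_{n,r} = O(n^\alpha L(n))$. Hence $\var K_n / (\esp K_n)^2 = O(n^{-\alpha}L(n)^{-1}) \to 0$, which yields $K_n/(n^\alpha L(n)) \to \Gamma(1-\alpha)$ in $L^2$, and then in $L^p$ for all $p>0$ because $0\le K_n/(n^\alpha L(n))$ is bounded in every $L^p$ (boundedness in $L^p$ follows from $K_n \le n$ together with the $L^2$ control, via interpolation for $p\le 2$ and a separate moment bound for $p>2$; alternatively one invokes the moment estimates already available in \citep{gnedin07notes}). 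For almost sure convergence I would pass to a polynomial subsequence $n_j = \lfloor j^{2/\alpha}\rfloor$ (or any subsequence along which $\sum_j (\esp K_{n_j})^{-1} < \infty$), apply Chebyshev plus Borel--Cantelli to get a.s.\ convergence along $\{n_j\}$, and then fill the gaps using monotonicity: $n\mapsto K_n$ is non-decreasing, so for $n_j\le n < n_{j+1}$ we sandwich $K_{n_j} \le K_n \le K_{n_{j+1}}$ and use $\esp K_{n_{j+1}}/\esp K_{n_j}\to1$, which holds because $n_{j+1}/n_j\to1$ and $n^\alpha L(n)$ is regularly varying. This is the standard monotone-subsequence trick for a.s.\ limits of occupancy counts.

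The statistic $K_{n,r}$ is not monotone in $n$, so the sandwiching needs a small extra step: I would instead control the increments $K_{n+1,r} - K_{n,r}$, which change by at most one box moving from the "$r-1$" class to the "$r$" class and one moving from "$r$" to "$r+1$", so $|K_{n+1,r}-K_{n,r}|\le 2$ and more usefully $\sum_{m=n_j}^{n_{j+1}} |K_{m+1,r}-K_{m,r}| \le \sum (K_{m+1,r-1}\text{-type increments})$, which is $O(n_{j+1}-n_j) = o(\esp K_{n_j})$ along the chosen subsequence; combined with a.s.\ convergence along $\{n_j\}$ this gives the full a.s.\ limit. The third display~\eqref{eq:Knodd} is then immediate from~\eqref{eq:Knr}: since $K_{n,r} \le K_n$ summably and $\sif r1 \esp K_{n,2r-1} \sim \Gamma(1-\alpha) n^\alpha L(n) \sif r1 p_\alpha(2r-1) = \Gamma(1-\alpha)2^{\alpha-1} n^\alpha L(n)$ using the stated identity $\sif r1 p_\alpha(2r-1) = 2^{\alpha-1}$, I would justify interchanging the sum and the limit by dominated convergence with dominating summable sequence $\esp K_{n,r}/(n^\alpha L(n)) \le C p_\alpha(r)$ uniformly in $n$ (an estimate again in \citep{karlin67central,gnedin07notes}), and by $L^1$-convergence of each term together with a tail bound $\sum_{r>R} K_{n,2r-1} \le \sum_{r>R} K_{n,r}$ whose expectation is $o(1)$ uniformly in $n$ after normalization.

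The main obstacle I anticipate is not any single estimate but the bookkeeping required to transfer convergence \emph{in probability/in $L^p$} to \emph{almost surely} for $K_{n,r}$ and for the infinite sum in~\eqref{eq:Knodd}: the lack of monotonicity in $n$ forces the increment-control argument, and the a.s.\ interchange of the limit with the infinite sum over $r$ requires a uniform-in-$n$ summable domination. Everything else is a direct appeal to the Karlin/Gnedin occupancy asymptotics plus Chebyshev--Borel--Cantelli along a regularly varying subsequence.
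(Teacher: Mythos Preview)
Your overall strategy is sound and close in spirit to the literature, but there is a genuine gap in the step you flagged as delicate: the increment-control argument for $K_{n,r}$ does not work for small $\alpha$. With $n_j=\lfloor j^{2/\alpha}\rfloor$ you have $n_{j+1}-n_j\asymp j^{2/\alpha-1}$ while $\esp K_{n_j}\asymp j^2 L(n_j)$, so your claimed estimate $n_{j+1}-n_j=o(\esp K_{n_j})$ requires $2/\alpha-1<2$, i.e.\ $\alpha>2/3$. More generally, for a polynomial subsequence $n_j\asymp j^c$ the Borel--Cantelli step needs $c>1/\alpha$ and your increment bound needs $c<1/(1-\alpha)$; these are compatible only when $\alpha>1/2$. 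So for $\alpha\le 1/2$ no choice of subsequence rescues the argument as written. The standard fix is to work instead with the cumulative counts $K_{n,\ge r}=\sum_{i\ge r}K_{n,i}$ (number of boxes with at least $r$ balls), which \emph{are} nondecreasing in $n$; the monotone sandwiching then goes through exactly as for $K_n$, and $K_{n,r}=K_{n,\ge r}-K_{n,\ge r+1}$ recovers the a.s.\ limit for each fixed $r$.

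By contrast, the paper's proof is structured quite differently and avoids this issue entirely. It simply \emph{cites} the almost sure convergence in all three displays from \citep{gnedin07notes,karlin67central}, and devotes its effort to the $L^p$ part. There the argument is cleaner than yours: one proves the single moment bound $\esp K_n^m\le C_m\Phi_n^m$ for every $m\in\N$ (via the negative-correlation inequality $\proba(Y_{n,\ell_1}>0,\dots,Y_{n,\ell_k}>0)\le\prod_q\proba(Y_{n,\ell_q}>0)$, established by a short conditioning trick), whence $K_n/(n^\alpha L(n))$ is bounded in every $L^m$ and hence uniformly integrable of any order. The $L^p$ convergence for $K_{n,r}$ and for $\sum_r K_{n,2r-1}$ then follows immediately from the a.s.\ convergence together with the pointwise domination $0\le K_{n,r}\le K_n$ and $0\le\sum_r K_{n,2r-1}\le K_n$, with no need for your separate tail-sum and dominated-convergence bookkeeping. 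Your variance-based route to $L^2$ is correct but stops short: to reach all $L^p$ you still need the higher-moment control, which is exactly what the paper isolates.
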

\begin{proof}
(i) For the almost sure convergence in the three limits above, see \citep[Corollary 21 and discussion after Proposition 2]{gnedin07notes} and \citep[Theorem 9]{karlin67central}.

(ii) To prove the $L^p$ convergence, 
we prove~\eqref{eq:Kn} holds in $L^p$ for $p>0$. This and the facts that $0\leq K_{n,r}\leq K_n$ and $0\leq \sum_{r\geq 1}K_{n,2r-1}\leq K_n$ then imply the $L^p$ convergence in~\eqref{eq:Knr} and~\eqref{eq:Knodd}.  
For~\eqref{eq:Kn}, it suffices to prove the uniform integrability of the sequence $(K_n^p/(n^{\alpha}L(n))^p)_{n\ge1}$.
This follows, writing $\Phi_n=\esp K_n$, from the  asymptotic equivalence (see \citep[Proposition 17]{gnedin07notes})
\equh
\label{eq:Phi_n}
\Phi_n\sim \Gamma(1-\alpha)n^\alpha L(n), \mmas n\to\infty,
\eque 
the fact that for every $m\in\N$, there exist a constant $C_m$, such that for all $n\in\N$, 
\equh\label{eq:Knm}
\esp K_n^m \leq C_m\Phi_n^m,
\eque
and then an application of the de la Vall\'ee Poussin criterion for uniform integrability:  $(K_n/(n^\alpha L(n)))_{n\in\N}$ is bounded in $L^m$ for $m>p$.
To see~\eqref{eq:Knm}, we need the following lemma.
\begin{Lem}
For $n\in\N$, for all $k\in\N$ and $\ell_1,\dots,\ell_k\in\N$ distinct, 
\[
\proba(Y_{n,\ell_1}>0,\dots,Y_{n,\ell_k}>0) \leq \prodd q1k\proba(Y_{n,\ell_q}>0).
\]
\end{Lem}
\begin{proof}
To prove the desired result, it suffices to show
\equh\label{eq:conditional}
\proba(Y_{n,\ell_k}>0\mid Y_{n,\ell_1}>0,\dots,Y_{n,\ell_{k-1}}>0) \leq \proba(Y_{n,\ell_k}>0),
\eque
for all $k\geq 2$ and $\ell_1,\dots,\ell_k\in\N$ distinct.
Observe that
\begin{multline*}
\proba(Y_{n,\ell_k}> 0\mid Y_{n,\ell_1}>0,\dots,Y_{n,\ell_{k-1}}>0) \\
= 1- \proba(Y_{n,\ell_k}=0)\frac{\proba(Y_{n,\ell_1}>0,\cdots,Y_{n,\ell_{k-1}}>0\mid Y_{n,\ell_k}=0)}{\proba(Y_{n,\ell_1}>0,\dots,Y_{n,\ell_{k-1}}>0)}.
\end{multline*}
The ratio after $\proba(Y_{n,\ell_k}=0)$ is larger than one, and this yields~\eqref{eq:conditional} and hence the desired result. To see this, let $\{Y_n^*\}_{n\in\N}$ be another collection of i.i.d.~random variables, taking values $i\in\N\setminus\{\ell_k\}$ with probability $p_i^* = p_i/(1-p_{\ell_k})$. Then, the ratio above equals
\[
\frac{\proba(Y_{n,\ell_1}^*>0,\dots,Y_{n,\ell_{k-1}}^*>0)}{\proba(Y_{n,\ell_1}>0,\dots,Y_{n,\ell_{k-1}}>0)}\geq 1.
\]
\end{proof}
Now to obtain~\eqref{eq:Knm}, it suffices to observe that
\begin{multline*}
\esp K_n^m  =  \sum_{\ell_1,\dots,\ell_m}\proba(Y_{n,\ell_1}>0,\dots,Y_{n,\ell_m}>0)\\
 \leq  \summ k1mC_{k,m}\sum_{\substack{\ell_1,\dots,\ell_k\\ \ell_i\neq\ell_j, i\neq j}}\prodd q1k \proba(Y_{n,\ell_q}>0) \leq \summ k1m C_{k,m}\Phi_n^k,
\end{multline*}
for some constants $C_{k,m}>0$,
and recall that $\Phi_n\uparrow\infty$ as $n\to\infty$. 
\end{proof}

We also need to work with partitions generated between two times $m$ and $n$, that is, the partitions generated by $Y_{m+1},\dots,Y_n$. For this purpose, we introduce $Y^*_{m,n,\ell} = \summ i{m+1}n\inddd{Y_i = \ell}$,
\eqnh
K^*_{m,n} = \sif \ell1\inddd{Y^*_{m,n,\ell}>0} \qmand K^*_{m,n,r} = \sif\ell1\inddd{Y^*_{m,n,\ell} = r}.
\eqne
We need the following lemma.
\begin{Lem}
Under the assumption~\eqref{eq:RV}, with probability one for all $s,t\in[0,1]$, $s<t$, the following limits hold:
\eqnhn
\limn\frac{K^*_{\floor{ns},\floor{nt}}}{n^\alpha L(n)} & =  &(t-s)^\alpha\Gamma(1-\alpha), \label{eq:K*}\\
\limn\frac{\sif irK^*_{\floor{ns},\floor{nt},i}}{n^\alpha L(n)} & =  &(t-s)^\alpha\Gamma(1-\alpha) \sif irp_\alpha(i),\label{eq:K*geqr}\\
\limn\frac{K^*_{\floor{ns},\floor{nt},r}}{n^\alpha L(n)} & =  &(t-s)^\alpha\Gamma(1-\alpha)p_\alpha(r), \label{eq:K*r}\\
\limn\frac{\sif i1K^*_{\floor{ns},\floor{nt},2i-1}}{n^\alpha L(n)} & =  &(t-s)^\alpha\Gamma(1-\alpha)2^{\alpha-1}. \label{eq:K*odd}
\eqnen
\end{Lem}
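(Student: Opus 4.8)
The plan is to deduce all four limits from the corresponding single-time statements in Lemma~\ref{lem:K} together with a stationarity-and-subtraction argument. The key observation is that the samplings $Y_{m+1},\dots,Y_n$ have the same law as $Y_1,\dots,Y_{n-m}$, so that for fixed $s<t$ the occupancy statistics $K^*_{\floor{ns},\floor{nt}}$ and $K^*_{\floor{ns},\floor{nt},r}$ are equal in distribution to $K_{\floor{nt}-\floor{ns}}$ and $K_{\floor{nt}-\floor{ns},r}$. However, this only gives convergence in distribution along a fixed pair $(s,t)$, whereas the statement asks for the existence of one null set off which the convergence holds for \emph{all} $s<t$ simultaneously. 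To get the joint almost-sure statement, I would first establish it on the countable dense set $(s,t)\in(\Q\cap[0,1])^2$, $s<t$, and then upgrade to all $s<t$ by a monotonicity/sandwiching argument.

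The steps, in order. First, fix rational $s<t$. Write $K^*_{\floor{ns},\floor{nt}} = $ (number of boxes occupied by $Y_1,\dots,Y_{\floor{nt}}$) $-$ (number of boxes occupied by $Y_1,\dots,Y_{\floor{ns}}$ but not by $Y_{\floor{ns}+1},\dots,Y_{\floor{nt}}$). A cleaner route: note $K^*_{\floor{ns},\floor{nt}}$ counts boxes receiving at least one ball in rounds $\floor{ns}+1,\dots,\floor{nt}$; by exchangeability this has the law of $K_{\floor{nt}-\floor{ns}}$. Since $\floor{nt}-\floor{ns}\sim n(t-s)$ and $L$ is slowly varying, $L(\floor{nt}-\floor{ns})\sim L(n)$, so $\eqref{eq:Kn}$ gives $K_{\floor{nt}-\floor{ns}}/(n^\alpha L(n))\to (t-s)^\alpha\Gamma(1-\alpha)$ a.s.\ along the process $Y_{\floor{ns}+1},Y_{\floor{ns}+2},\dots$. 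This yields $\eqref{eq:K*}$ for each fixed rational pair, hence simultaneously for all rational pairs (countable union of null sets). The same argument with $\eqref{eq:Knr}$ and $\eqref{eq:Knodd}$, using $0\le K^*_{\cdot,\cdot,r}\le K^*_{\cdot,\cdot}$ and $0\le\sum_r K^*_{\cdot,\cdot,2r-1}\le K^*_{\cdot,\cdot}$ to control tails exactly as in the proof of Lemma~\ref{lem:K}, gives $\eqref{eq:K*r}$, $\eqref{eq:K*odd}$, and $\eqref{eq:K*geqr}$ on the rationals.

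Second, I would remove the rationality restriction. For $\eqref{eq:K*}$, observe that $(m,n)\mapsto K^*_{m,n}$ is monotone: if $[m',n']\subseteq[m,n]$ then $K^*_{m',n'}\le K^*_{m,n}$ (more boxes can only be hit over a longer window). So for arbitrary $s<t$, picking rationals $s_-<s\le t<t_+$ and $s<s_+<t_-<t$ (when the latter is possible; handle $t-s$ small by shrinking), one sandwiches $K^*_{\floor{ns_+},\floor{nt_-}}\le K^*_{\floor{ns},\floor{nt}}\le K^*_{\floor{ns_-},\floor{nt_+}}$, divides by $n^\alpha L(n)$, lets $n\to\infty$ along the rational limits, and then lets the rationals approach $s,t$; continuity of $u\mapsto u^\alpha$ closes the gap. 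For the statistics $K^*_{\cdot,\cdot,r}$ the function is not monotone, but $K^*_{\floor{ns},\floor{nt}} - K^*_{\floor{ns},\floor{nt},r}$ and partial sums like $\sum_{i\ge r}K^*_{\cdot,\cdot,i}$ can still be controlled by combining the monotone bound on $K^*_{\cdot,\cdot}$ with the a.s.\ convergence already proved on the dense set and an $\varepsilon$-tail argument (choose $R$ with $\sum_{i>R}p_\alpha(i)<\varepsilon$ and use $\sum_{i>R}K^*_{\cdot,\cdot,i}\le K^*_{\cdot,\cdot}$); the details mirror part (ii) of the proof of Lemma~\ref{lem:K}.

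The main obstacle is precisely the passage from "a.s.\ for each fixed pair" to "a.s.\ for all pairs simultaneously" for the non-monotone quantities $K^*_{\floor{ns},\floor{nt},r}$. Monotonicity handles $K^*$ itself cleanly, and I expect the cleanest packaging is to prove the joint statement for $K^*_{\cdot,\cdot}$ and for $\sum_{i\ge r}K^*_{\cdot,\cdot,i}$ (both essentially monotone in the window in the right direction, since $\sum_{i\ge r}K^*_{m,n,i}\le K^*_{m,n}$ and the discrepancy is $\sum_{i<r}K^*_{m,n,i}$ which is itself a finite combination one can bound) and then obtain $K^*_{\cdot,\cdot,r}$ as the difference $\sum_{i\ge r} - \sum_{i\ge r+1}$. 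One should also double-check the degenerate regime where $\floor{nt}-\floor{ns}$ fails to grow like $n(t-s)$, but this is not an issue since $t>s$ is fixed and strict.
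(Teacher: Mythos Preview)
Your overall architecture---establish the limits for a countable dense set of pairs $(s,t)$, then sandwich via monotonicity of the window $[m,n]\mapsto K^*_{m,n}$ to reach all pairs---is exactly what the paper does, and your handling of the non-monotone quantities via the tail sums $\sum_{i\ge r}K^*_{\cdot,\cdot,i}$ and then differencing is the right way to package the remaining statements.

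There is, however, a genuine gap in your first step. For a fixed rational pair $s<t$ you write that the distributional identity $K^*_{\floor{ns},\floor{nt}}\eqd K_{\floor{nt}-\floor{ns}}$ together with \eqref{eq:Kn} yields the almost-sure limit ``along the process $Y_{\floor{ns}+1},Y_{\floor{ns}+2},\dots$''. This does not work: the distributional identity holds only marginally in $n$, not jointly, because the starting index $\floor{ns}$ moves with $n$. Marginal equality in law cannot transfer almost-sure convergence; from $K^*_{\floor{ns},\floor{nt}}\eqd K_{\floor{nt}-\floor{ns}}$ and Lemma~\ref{lem:K} you only obtain convergence \emph{in probability} for each fixed $(s,t)$. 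That is precisely what the paper notes, and then it supplies the missing almost-sure upgrade as follows: use the variance estimate $\var K_n\sim \Gamma(1-\alpha)(2^\alpha-1)n^\alpha L(n)$, pass to the sparse subsequence $n_m=\floor{m^{2/\alpha}}$, apply Chebyshev and Borel--Cantelli to get
\[
\frac{K^*_{\floor{n_ms},\floor{n_mt}}}{\Phi_{\floor{n_mt}-\floor{n_ms}}}\longrightarrow 1\quad\mbox{a.s.},
\]
and finally fill in between consecutive $n_m$ by the interval monotonicity
\[
K^*_{\floor{n_{m+1}s},\floor{n_mt}}\le K^*_{\floor{ns},\floor{nt}}\le K^*_{\floor{n_{m}s},\floor{n_{m+1}t}},\qquad n_m\le n\le n_{m+1},
\]
together with $\Phi_{\floor{n_{m+1}t}-\floor{n_ms}}/\Phi_{\floor{n_{m}t}-\floor{n_{m+1}s}}\to 1$. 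The same subsequence-plus-monotonicity argument is run for $\sum_{i\ge r}K^*_{\cdot,\cdot,i}$, and the remaining two limits follow by subtraction. Once you patch your first step with this argument (or any equivalent one that actually produces an almost-sure statement for fixed $(s,t)$), the rest of your plan goes through.
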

\begin{proof}
To prove the desired results, it suffices to establish them for fixed $s$ and $t$; the results then hold for a countable dense set of $[0,1]$ with probability one, and by continuity for all $s,t\in[0,1]$ with probability one.

Observe  that
\equh\label{eq:K*K}
K^*_{\floor{ns},\floor{nt}} \eqd K_{\floor{nt}-\floor{ns}} \qmand K^*_{\floor{ns},\floor{nt},r}\eqd K_{\floor{nt}-\floor{ns},r},\;\mbox{ for all } r\in\N,
\eque 
where `$\eqd$' stands for equality in distribution. 
By Lemma~\ref{lem:K}, it follows immediately that all convergences hold in probability. To strengthen to the almost sure sense, we apply a monotonicity argument as in \citep[Proposition 2]{gnedin07notes}.

From now on, we fix $s,t\in[0,1], s<t$. We first prove~\eqref{eq:K*}. Let $V_n = \var K_n$ and, as before, $\Phi_n = \esp K_n$. By \citep[Lemma 1 and Proposition 17]{gnedin07notes}, 
\[
V_n\sim \Gamma(1-\alpha)(2^\alpha-1)n^\alpha L(n),
\mmas n\to\infty.
\]
Therefore, for $n_m = \floor{m^{2/\alpha}}$, by~\eqref{eq:K*K} and the Borel--Cantelli lemma, we have
$$
\limn \frac{K^*_{\floor{n_ms},\floor{n_mt}}}{\Phi_{\floor{n_mt}-\floor{n_ms}} }= 1\;\text{ almost surely.}
$$
Thus, by 
\eqref{eq:Phi_n},
\[
\limm\frac{K^*_{\floor{n_ms},\floor{n_mt}}}{n_m^{\alpha} L(n_m)} = (t-s)^\alpha\Gamma(1-\alpha) \;\mbox{ almost surely.}
\]
Furthermore, for $m$ large enough, we have $\floor{n_ms}<\floor{n_{m+1}s}<\floor{n_mt}<\floor{n_{m+1}t}$, and since $\Phi_n$ is increasing, 
\[
\frac{K^*_{\floor{n_{m+1}s},\floor{n_mt}}}{\Phi_{\floor{n_{m+1}t}-\floor{n_ms}}} 
\leq \frac{K^*_{\floor{ns},\floor{nt}}}{\Phi_{\floor{nt}-\floor{ns}}}
\leq \frac{K^*_{\floor{n_{m}s},\floor{n_{m+1}t}}}{\Phi_{\floor{n_{m}t}-\floor{n_{m+1}s}}}, \mfa n_m\leq n\leq n_{m+1}.
\]
Since
\[
\limm \frac{\Phi_{\floor{n_{m+1}t}-\floor{n_ms}}}{\Phi_{\floor{n_{m}t}-\floor{n_{m+1}s}}} = 1,
\]
it follows that \eqref{eq:K*} holds with probability one. The same argument holds for \eqref{eq:K*geqr}, which implies \eqref{eq:K*r}. At last, \eqref{eq:K*odd} follows from \eqref{eq:K*geqr} and \eqref{eq:K*r}.
\end{proof}

\subsection{Proof of Theorem~\ref{thm:1}}
The main idea behind the proof is that conditioning on the underlying partitions, the partial sum $S_\vvn$ can be represented as a sum of independent $\pm1$-valued random variables, for which limit theorems follow immediately.  We illustrate this idea by first proving a central limit theorem of the model. We let $\calN(0,\sigma^2)$ denote the Gaussian distribution with mean zero and variance $\sigma^2$.
\begin{Prop}\label{prop:CLTd}
For the two-dimensional randomized Karlin model, if $\mu_{q}$ satisfies~\eqref{eq:RV} with $\alpha_q\in(0,1)$ and slowly varying functions $L_q$ for $q=1,2$, then
\[
\frac{S_\vvn}{|\vvn|^{\vv\alpha/2} \vv L(\vvn)^{1/2}}\weakto\calN(0,\sigma_{\vv\alpha}^2),
\]
as $\vvn\to\infty$, with $\sigma_{\vv\alpha}^2 = \prodd q12 \Gamma(1-\alpha_q)2^{\alpha_q-1}$. 
\end{Prop}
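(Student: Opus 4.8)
The plan is to condition on the underlying random partition $\Pi = (\Pi\topp1, \Pi\topp2)$ generated by the two sampling sequences $Y\topp1$ and $Y\topp2$, and observe that given $\Pi$, the partial sum $S_\vvn$ becomes a weighted sum of the i.i.d.\ signs $\{\epsilon_\vvl\}_{\vvl\in\N^2}$. Concretely, collecting the balls falling in a common product box, one writes
\[
S_\vvn = \sum_{\vvi\in[\vv1,\vvn]} X_\vvi = \sum_{\vvl\in\N^2} \epsilon_\vvl \pp{\prodd q12 \summ{i_q}1{n_q} \indd{Y_{i_q}\topp q = \ell_q} (-1)^{Y\topp q_{i_q,\ell_q}+1}} = \sum_{\vvl\in\N^2}\epsilon_\vvl\, a_{\vvn,\vvl},
\]
where $a_{\vvn,\vvl} = \prod_{q=1}^2 A\topp q_{n_q,\ell_q}$ and $A\topp q_{n,\ell} = \sum_{i=1}^n \indd{Y\topp q_i = \ell}(-1)^{Y\topp q_{i,\ell}+1}$ is the alternating partial sum within box $\ell$ in direction $q$. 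Since a box containing $r$ balls contributes $A\topp q_{n,\ell} \in\{0,1\}$ according to the parity of $r$ (it equals $1$ iff the box has an odd number of balls), we get $a_{\vvn,\vvl}\in\{0,1\}$, and in fact $\sum_{\vvl} a_{\vvn,\vvl}^2 = \sum_{\vvl}a_{\vvn,\vvl} = (\#\text{boxes in direction }1\text{ with odd count after }n_1)\times(\#\text{boxes in direction }2\text{ with odd count after }n_2)$, i.e.\ $\sum_{\vvl}a_{\vvn,\vvl}^2 = \pp{\sif r1 K\topp1_{n_1,2r-1}}\pp{\sif r1 K\topp2_{n_2,2r-1}}=:\sigma_\vvn^2$, using independence of $Y\topp1,Y\topp2$ and the product structure of the partition.

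Next, conditionally on $\Pi$, $S_\vvn$ is a sum of independent bounded centered random variables $\epsilon_\vvl a_{\vvn,\vvl}$, so I would apply a conditional central limit theorem with Lyapunov's condition. The Lyapunov ratio is $\sum_\vvl a_{\vvn,\vvl}^4 / \pp{\sum_\vvl a_{\vvn,\vvl}^2}^2 = 1/\sigma_\vvn^2$ (since $a_{\vvn,\vvl}\in\{0,1\}$), and by Lemma~\ref{lem:K} (specifically~\eqref{eq:Knodd}) applied in each direction, $\sigma_\vvn^2 / (|\vvn|^{\vv\alpha}\vv L(\vvn)) \to \sigma_{\vv\alpha}^2 = \prod_{q=1}^2\Gamma(1-\alpha_q)2^{\alpha_q-1}$ almost surely, and in particular $\sigma_\vvn^2\to\infty$ a.s.\ as $\vvn\to\infty$. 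Hence, almost surely, conditionally on $\Pi$,
\[
\frac{S_\vvn}{\sigma_\vvn} \weakto \calN(0,1)\qmand \frac{\sigma_\vvn}{|\vvn|^{\vv\alpha/2}\vv L(\vvn)^{1/2}} \longrightarrow \sigma_{\vv\alpha},
\]
so that $S_\vvn/(|\vvn|^{\vv\alpha/2}\vv L(\vvn)^{1/2}) \weakto \calN(0,\sigma_{\vv\alpha}^2)$ conditionally on $\Pi$, almost surely. Finally I would remove the conditioning: since the conditional characteristic function $\esp[e^{i\theta S_\vvn/(|\vvn|^{\vv\alpha/2}\vv L(\vvn)^{1/2})}\mid\Pi]$ converges a.s.\ to $e^{-\theta^2\sigma_{\vv\alpha}^2/2}$ and is bounded by $1$, dominated convergence yields convergence of the unconditional characteristic function to the same Gaussian limit, which is exactly the claim.

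The main obstacle is a slightly delicate point rather than a deep one: Lemma~\ref{lem:K} as stated gives the asymptotics of $K_{n,2r-1}$-type quantities along the full sequence $n\to\infty$, and here the index $\vvn=(n_1,n_2)$ must be allowed to go to infinity in both coordinates \emph{jointly but arbitrarily}, so I need the almost sure convergence $\sigma_\vvn^2/(|\vvn|^{\vv\alpha}\vv L(\vvn))\to\sigma_{\vv\alpha}^2$ to hold simultaneously over all such $\vvn$; this follows from the one-dimensional a.s.\ convergences in each coordinate together with the product form and the uniform-convergence (Dini-type) upgrade of regular variation, which is why the statement is phrased with $\vvn\to\infty$ meaning $\min(n_1,n_2)\to\infty$. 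The remaining care is simply to ensure the conditional CLT is invoked on the event of full probability on which all these limits hold and on which $\sigma_\vvn^2\to\infty$; everything else is routine.
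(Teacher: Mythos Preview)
Your proposal is correct and follows essentially the same route as the paper: condition on the partition $\calY=\sigma(Y\topp1,Y\topp2)$, observe that $S_\vvn$ becomes a sum of $\wt K_\vvn=\sigma_\vvn^2$ i.i.d.\ Rademacher variables, invoke Lemma~\ref{lem:K} (equation~\eqref{eq:Knodd}) in each direction to get $\sigma_\vvn^2/(|\vvn|^{\vv\alpha}\vv L(\vvn))\to\sigma_{\vv\alpha}^2$ a.s., apply the CLT conditionally, and then remove the conditioning by dominated convergence. Your factorization $a_{\vvn,\vvl}=A\topp1_{n_1,\ell_1}A\topp2_{n_2,\ell_2}$ and the Lyapunov check are a slightly more explicit way of reaching the same conditional representation the paper writes as $S_\vvn\mid\wt K_\vvn\eqd\sum_{i=1}^{\wt K_\vvn}\epsilon_i'$; the ``obstacle'' you flag about joint limits in $\vvn$ is indeed handled automatically by the product structure, exactly as the paper uses it.
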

\begin{proof}
Let $S_{\vvn,\vvl}$ be the sum over all the spins $X_i$ associated to the box $\vvl$: 
\[
S_{\vvn,\vvl}=\sum_{\vvi\in[\vv1,\vvn]}X_\vvi\inddd{Y_\vvi=\vvl}.
\]
Because of the alternating assignment rule, $S_{\vvn,\vvl} \in\{-1,0,1\}$, and $S_{\vvn,\vvl}= 0$ if and only if the number of variables $X_\vvi$ associated to the box $\vvl$ after $\vvn$ samplings is even. 
We are therefore interested in the number of boxes having an odd number of balls after $\vvn$ samplings. To give an expression of this number, to be denoted by $\wt K_\vvn$ below, we first remark that the number of boxes with an odd number of balls from samplings $Y\topp q$ equals
\[
\sif\ell1 \inddd{Y_{n,\ell}\topp q~\rm odd} = \sif i1K\topp q_{n,2i-1},
\]
where $K\topp q_{n,i} = \sif\ell 1\inddd{Y_{n,\ell}\topp q = i}$. 
It follows that
\equh\label{eq:Knd}
\wt K_\vvn = \prodd q12\pp{\sum_{i_q=1}^\infty K\topp q_{n_q,2i_q-1}}.
\eque
Therefore,
\equh\label{eq:S_nK_n}
S_\vvn \mid \wt K_\vvn \eqd \summ i1{\wt K_\vvn}\epsilon_i',
\eque
where the left-hand side is understood as the conditional distribution of $S_\vvn$ given $\wt K_\vvn$, and on the right-hand side $\indn{\epsilon'}$ are i.i.d.~copies of $\epsilon_{\vv1}$. 
Introduce furthermore the $\sigma$-field $\calY = \sigma(Y\topp1,Y\topp 2)$, and observe that $\wt K_\vvn$ is $\calY$ measurable.
Now, by Lemma~\ref{lem:K}, 
\[
\limn\frac{\var(S_\vvn\mid\calY)}{|\vvn|^{\vv\alpha} \vv L(\vvn)} = \limn\prodd q12\frac{\sif{i_q}1K\topp q_{n_q,2i_q-1}}{n_q^{\alpha_q} L_q(n_q)} = \prodd q12\sigma_{\alpha_q}^2 = \sigma_{\vv\alpha}^2 \;\mbox{ a.s.}
\]
Therefore, we obtain the conditional central limit theorem
\[
\left.\frac{S_\vvn}{|\vvn|^{\vv\alpha/2} \vv L(\vvn)^{1/2}}\mmid\calY\right.\weakto\calN(0,\sigma_{\vv\alpha}^2), \mmas \vvn\to\infty,
\]
and the desired annealed version follows.
\end{proof}

To prove Theorem~\ref{thm:1}, we prove the convergence of finite-dimensional distributions and tightness separately.
\begin{proof}[Proof of convergence of finite-dimensional distributions]
For $m\in\N$ fixed, consider $\lambda_1,\ldots,\lambda_m\in\R$, and $\vv t\topp r = (t_{1}\topp r,t_{2}\topp r)\in[0,1]^2$ for $r=1,\ldots,m$. Writing
\equh\label{eq:nr}
\vvn_{\vv t} = (\sfloor{n_1t_{1}},\sfloor{n_2t_{2}}),\,\mbox{ for all } \vvt\in [0,1]^2,\;\vvn\in\N^2,
\eque
we set
\[
\what S_\vvn = \summ r1m\lambda_rS_{\vvn_{\vv t\topp r}},\;\vvn\in\N^2.
\]
Similarly as before and using the Cram\'er--Wold device \citep[Corollary~4.5]{kallenberg97foundations}, to show the convergence of finite-dimensional distributions, it suffices to show the following conditional central limit theorem:
\[
\left.
\frac{\what S_\vvn}{\sigma_{\vv\alpha}|\vvn|^{\vv\alpha/2} \vv L(\vvn)^{1/2}}\mmid\calY\right.\weakto \summ r1m\lambda_r\B_{\vv t\topp r}^{\vv\alpha/2},
\]
as $\vvn\to\infty$. 
For this purpose, we first remark that given $\calY$, $\what S_\vvn$ is the sum of 
$\wt K_\vvn$ independent random variables corresponding to the $\wt K_\vvn$ boxes that have at least one ball from the first $\vvn$ samplings, and that each such random variable  is bounded by $|\lambda_1|+\cdots+|\lambda_m|$ uniformly. At the same time, we know that $\wt K_\vvn\to\infty$ almost surely, as $\vvn\to\infty$. Therefore, 
to establish the conditional central limit theorem it remains to show that the variance 
\eqnh
\var(\what S_\vvn\mid\calY) = \summ r1m\summ {r'}1m\lambda_r\lambda_{r'}\esp_\calY(S_{\vvn_{\vv t\topp r}}S_{\vvn_{\vv t\topp{r'}}})
\eqne
converges to the corresponding one of the fractional Brownian sheet as $\vvn\to\infty$, after normalization. Here and in the sequel, we write $\esp_\calY(\cdot) = \esp(\cdot\mid\calY)$. This part is  established in Lemma~\ref{lem:variance}.  
\end{proof}
\begin{Lem}\label{lem:variance}
With the same notation as in~\eqref{eq:nr}, for all $\vvn\in\N^2$, $\vvt,\vvs \in [0,1]^2$,
\[
\lim_{\vvn\to\infty}\frac{\esp_\calY(S_{\vvn_{\vv t}}S_{\vvn_{\vv s}})}{\sigma_{\vv\alpha}^2|\vvn|^{\vv\alpha} \vv L(\vvn)} 
= \prodd q12\frac12\pp{|t_{q}|^{\alpha_q}+|s_{q}|^{\alpha_{q}} - |t_{q}-s_{q}|^{\alpha_q}}.
\]
\end{Lem}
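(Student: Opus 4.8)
The plan is to decouple the two coordinate directions by conditioning on $\calY$, which turns the conditional covariance into a product of two one-dimensional odd-occupancy quantities, and then to identify the limit of each factor using the estimate~\eqref{eq:K*odd} together with a short parity computation.

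\textbf{Step 1 (factorization).} Fix $\vvt,\vvs\in[0,1]^2$ and abbreviate $\vv N=\vvn_\vvt$, $\vv N'=\vvn_\vvs$. First I would record that, conditionally on $\calY$, the total contribution of box $\vvl=(\ell_1,\ell_2)$ to $S_{\vv N}$ equals $\epsilon_\vvl$ times a product over the two directions: the event $\{Y_\vvi=\vvl\}$ splits as $\{Y_{i_1}\topp1=\ell_1\}\cap\{Y_{i_2}\topp2=\ell_2\}$, and, as $i_q$ runs in increasing order through $\{i\le N_q:Y_i\topp q=\ell_q\}$, the statistic $Y_{i_q,\ell_q}\topp q$ runs through $1,2,\dots,Y_{N_q,\ell_q}\topp q$, so the partial sum of the alternating signs $(-1)^{Y_{i_q,\ell_q}\topp q+1}$ over these indices equals $\inddd{Y_{N_q,\ell_q}\topp q\odd}$. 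Hence $S_{\vv N}=\sum_{\vvl\in\N^2}\epsilon_\vvl\prodd q12\inddd{Y_{N_q,\ell_q}\topp q\odd}$, and since $\{\epsilon_\vvl\}$ are i.i.d., centered, of unit variance and independent of $\calY$ while the indicators are $\calY$-measurable, the off-diagonal $\epsilon$-terms vanish and
\[
\esp_\calY\pp{S_{\vv N}S_{\vv N'}}=\sum_{\vvl\in\N^2}\prodd q12\inddd{Y_{N_q,\ell_q}\topp q\odd}\inddd{Y_{N'_q,\ell_q}\topp q\odd}=\prodd q12 M_{n_q}(t_q,s_q),
\]
where $M_n(t,s)\defe\sif\ell1\inddd{Y_{\floor{nt},\ell}\odd}\inddd{Y_{\floor{ns},\ell}\odd}$ for the one-dimensional model. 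Dividing by $\sigma_{\vv\alpha}^2=\prodd q12\Gamma(1-\alpha_q)2^{\alpha_q-1}$, Lemma~\ref{lem:variance} reduces to proving, for each fixed $s,t\in[0,1]$, that almost surely
\[
\limn\frac{M_n(t,s)}{n^\alpha L(n)}=\Gamma(1-\alpha)2^{\alpha-1}\cdot\frac12\pp{t^\alpha+s^\alpha-|t-s|^\alpha}.
\]

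\textbf{Step 2 (the one-dimensional limit) and expected main obstacle.} Assuming $s\le t$ without loss of generality, I would set $A_\ell=\inddd{Y_{\floor{ns},\ell}\odd}$ and $B_\ell=\inddd{Y^*_{\floor{ns},\floor{nt},\ell}\odd}$; since $Y_{\floor{nt},\ell}=Y_{\floor{ns},\ell}+Y^*_{\floor{ns},\floor{nt},\ell}$ and the parity of a sum is the exclusive-or of the parities, one has, pointwise in $\ell$, $\inddd{Y_{\floor{nt},\ell}\odd}=A_\ell+B_\ell-2A_\ell B_\ell$ and, using $A_\ell^2=A_\ell$, $M_n(t,s)=\sum_\ell A_\ell(1-B_\ell)=\sum_\ell A_\ell-\sum_\ell A_\ell B_\ell$. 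Summing the first identity over $\ell$ to eliminate the cross term gives
\[
M_n(t,s)=\frac12\pp{\sum_\ell A_\ell-\sum_\ell B_\ell+\sif\ell1\inddd{Y_{\floor{nt},\ell}\odd}}.
\]
Here $\sum_\ell A_\ell=\sif i1 K^*_{0,\floor{ns},2i-1}$, $\sum_\ell B_\ell=\sif i1 K^*_{\floor{ns},\floor{nt},2i-1}$ and $\sif\ell1\inddd{Y_{\floor{nt},\ell}\odd}=\sif i1 K^*_{0,\floor{nt},2i-1}$, so~\eqref{eq:K*odd}, applied with the time pairs $(0,s)$, $(s,t)$ and $(0,t)$ (the corner cases $s=0$ or $s=t$ making a term vanish identically, the relation persisting), shows that these three sums divided by $n^\alpha L(n)$ converge almost surely to $\Gamma(1-\alpha)2^{\alpha-1}$ times $s^\alpha$, $(t-s)^\alpha$ and $t^\alpha$. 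Substituting and using $(t-s)^\alpha=|t-s|^\alpha$ for $s\le t$ yields the displayed one-dimensional limit; multiplying the two one-dimensional limits (which hold almost surely for each fixed pair, hence jointly for the finitely many pairs appearing in the finite-dimensional statement) completes the proof. The one genuinely delicate point is the cross term $\sum_\ell A_\ell B_\ell$: the occupancy estimates in hand describe parity counts of a \emph{single} block of samplings, whereas this term concerns the joint parity pattern of the two overlapping windows $[1,\floor{ns}]$ and $[1,\floor{nt}]$, which would a priori require new joint estimates. The exclusive-or identity of Step~2 is exactly what rewrites $\sum_\ell A_\ell B_\ell$ as a fixed linear combination of single-block odd-occupancy counts already controlled by~\eqref{eq:K*odd}; once that identity is in place, the remainder is bookkeeping plus an appeal to the almost sure convergences established earlier. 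It is the same algebraic identity that, at the level of limits, produces the fractional-Brownian-sheet covariance $\prodd q12\frac12(t_q^{\alpha_q}+s_q^{\alpha_q}-|t_q-s_q|^{\alpha_q})$.
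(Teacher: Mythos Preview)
Your proof is correct. Both arguments reduce the two-dimensional conditional covariance to a product of one-dimensional odd-occupancy counts and then invoke~\eqref{eq:K*odd}, so the analytic input is identical; the difference is in how the reduction is carried out.

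For the factorization, the paper introduces an auxiliary field $\wt X_\vvn$ built from product-form spins $\wt\epsilon_\vvl=\prod_q\wt\epsilon_{\ell_q}\topp q$, observes that $(\epsilon_\vvl,\epsilon_{\vvl'})\eqd(\wt\epsilon_\vvl,\wt\epsilon_{\vvl'})$ so that bivariate conditional laws (hence conditional covariances) agree, and then exploits the literal product structure $\wt S_\vvn=\prod_q\wt S_{n_q}\topp q$ to separate the directions. Your Step~1 bypasses this device: you expand $S_{\vv N}$ directly in the $\calY$-conditionally orthonormal family $\{\epsilon_\vvl\}$, so that $\esp_\calY(S_{\vv N}S_{\vv N'})$ is simply the inner product of the coefficient arrays, which factors over $q$ because the coefficients $\prod_q\inddd{Y_{N_q,\ell_q}\topp q\odd}$ do. This is more elementary and transparent. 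For the one-dimensional piece, the paper uses the polarization identity $\esp_\calY(S_{n_t}S_{n_s})=\tfrac12\esp_\calY[S_{n_t}^2+S_{n_s}^2-(S_{n_t}-S_{n_s})^2]$ together with the representation of $S_{n_t}-S_{n_s}$ given $\calY$ as a sum of $\wt K^*_{n_s,n_t}$ independent signs; your XOR identity $\inddd{a+b\odd}=A+B-2AB$ leads to the \emph{same} formula $M_n(t,s)=\tfrac12(\sum_\ell A_\ell+\sum_\ell\inddd{Y_{\floor{nt},\ell}\odd}-\sum_\ell B_\ell)$ by pure parity bookkeeping. Either way the three terms are exactly the odd-occupancy counts on $[1,\floor{ns}]$, $[1,\floor{nt}]$ and $(\floor{ns},\floor{nt}]$, and~\eqref{eq:K*odd} finishes the job.
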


\begin{proof}
We first consider the case of the one-dimensional model described in Section~\ref{sec:KarlinOne}.
We write, for $n\in\N$, $t,s\in[0,1]$,
\equh\label{eq:var2}
\esp_\calY(S_{n_t}S_{n_s}) = \frac12\esp_\calY\bb{S_{n_t}^2+S_{n_s}^2 - (S_{n_t}-S_{n_s})^2},
\eque
where $S_n=\sum_{i=1}^n X_i$, $n_t=\sfloor{nt}$, and $n_s=\sfloor{ns}$. 
We saw in the proof of Proposition~\ref{prop:CLTd} that 
$
\esp_\calY S_{n}^2 = {\sif i1 K_{n,2i-1}},
$
and thus Lemma~\ref{lem:K} yields that, almost surely,
\equh\label{eq:var3}
\limn\frac{\esp_\calY S_{n_t}^2}{n^\alpha L(n)} = t^{\alpha}\sigma_\alpha^2.
\eque
For $n>n'$, by a similar argument as in the proof of Proposition~\ref{prop:CLTd}, we see that 
\[
 S_{n}-S_{n'}\mid \calY \eqd \summ i1{\wt K^*_{n,n'}}  \epsilon_i',\quad \text{ with }\wt K^*_{n,n'}= \sif i1 K^*_{n,n',2i-1},
\]
where $\{\epsilon_i'\}$ are i.i.d.~copies of $\epsilon_1$.
In this way,
\[
\esp_\calY(S_{n_t}-S_{n_s})^2 = \sif i1 K^*_{n_t,n_s,2i-1},
\]
and by~\eqref{eq:K*odd}, 
\[
\limn\frac{\esp_\calY(S_{n_t}-S_{n_s})^2}{n^\alpha L(n)} = |t-s|^\alpha\sigma_\alpha^2\; \mbox{ almost surely}.
\]
Combining this,~\eqref{eq:var2} and~\eqref{eq:var3}, we have thus proved
\[
 \lim_{n\to\infty}\frac{\esp_\calY(S_{n_t}S_{n_s})}{\sigma_{\alpha}^2|n|^{\alpha} L(n)} 
= \frac12\pp{|t|^{\alpha}+|s|^{\alpha} - |t-s|^{\alpha}}.
\]

For the two-dimensional model, we start by introducing a different model. Let $\{\wt \epsilon\topp q_k\}_{q=1,2,\;k\in\N}$ be i.i.d.~random variables taking values $\pm1$ with equal probabilities and set $\wt\epsilon_\vvn = \prodd q12\wt\epsilon\topp q_{n_q}$. Now, assign 
\equh\label{eq:wtX}
\wt X_\vv n = \prodd q12 \wt\epsilon_{\ell_q}\topp q(-1)^{Y\topp q_{n_q,\ell_q}} \mbox{ if } Y_\vvn = \vvl,
\eque
and set $\wt S_\vvn = \sum_{\vvi\in[\vv1,\vvn]}\wt X_{\vvi}$. 
Although $\{\wt X_\vvn\}_{\vvn\in\N^2}$  is different from $\{X_\vvn\}_{\vvn\in\N^2}$, observe that for all $\vvi,\vvj\in\N^2$, 
\[
(X_\vvi,X_\vvj)\mid\calY \eqd (\wt X_{\vvi},\wt X_{\vvj})\mid\calY.
\]
Indeed, this follows from the fact that
$(\epsilon_\vvl,\epsilon_{\vvl'}) \eqd (\wt\epsilon_\vvl,\wt\epsilon_{\vvl'}), \mfa \vvl,\vvl'\in\N^2$. (Note that $\{\epsilon_\vvl\}_{\vvl\in\N^d}$ and $\{\wt\epsilon_\vvl\}_{\vvl\in\N^d}$ do not have the same joint distributions, although the fact that they have the same bivariate distributions serves our purpose.)\\
It follows that
\[
\esp_\calY(S_{\vvn_{\vv t}}S_{\vvn_{\vv s}}) = \esp_\calY(\wt S_{\vvn_{\vv t}}\wt S_{\vvn_{\vv s}}).
\]
However,
 $\esp_\calY(\wt S_{\vvn_{\vv t}}\wt S_{\vvn_{\vv s}})$ is much easier to compute. From~\eqref{eq:wtX}, $\wt X_\vvi$ can be written as
\[
\wt X_\vvi = \prodd q12 \wt X\topp q_{i_q},\, \mbox{ if }Y_\vvi = \vvl,\qmwith \wt X\topp q_{i_q} = \wt\epsilon\topp q_{\ell_q}(-1)^{Y\topp q_{i_q,\ell_q}+1},\; q=1,2.
\]
In this way, one can write
\[
\wt S_\vvn = \sum_{\vvi\in[\vv1,\vvn]}\prodd q12\wt X\topp q_{i_q} = \prodd q12\sum_{i_q=1}^{n_q}\wt X\topp q_{i_q} = \prodd q12 \wt S\topp q_{n_q} \qmwith \wt S\topp q_n = \summ i1{n}\wt X\topp q_i.
\]
Observe that $\{\wt X\topp1_i\}_{i\in\N}$ and $\{\wt X\topp2_i\}_{i\in\N}$ are independent and each $\wt S_n\topp q$ is the partial sum of a one-dimensional Karlin model with parameter $\alpha_q$.  Therefore, 
\begin{align*}
\frac{\esp_\calY(\wt S_{\vvn_{\vvt}}\wt S_{\vvn_{\vvs}})}{|\vvn|^{\vv\alpha} \vv L(\vvn)} 
&= \prodd q12\frac{\esp_\calY\pp{\wt S\topp q_{\floor{n_q t_q}}\wt S\topp q_{\floor{n_qs_q }}}}{n_q^{\alpha_q}L_q(n_q)}\\
&\to \sigma_{\vv\alpha}^2\prodd q1d\frac12\pp{|t_{q}|^{\alpha_q} + |s_{q}|^{\alpha_{q}} - |t_{q}-s_{q}|^{\alpha_q}},\mmas \vvn\to\infty.
\end{align*}
\end{proof}

\begin{proof}[Proof of tightness]
Applying a criterion of \citet{bickel71convergence},
it suffices to establish for some $p>0, \gamma>1$, 
\equh\label{eq:bickel}
\esp\pp{\frac{|S_\vvm|}{|\vvn|^{\vv\alpha/2}}}^p\leq C\prodd q12\pp{\frac{m_q}{n_q}}^{\gamma}, \mfa \vvm,\vvn\in\N^2, \vvm\leq\vvn.
\eque
To do so, pick $p>\max(2/\alpha_1,2/\alpha_2)$.
Recall that, given $\calY$, $S_\vvm$ is the sum of $\wt K_\vvm$ independent copies of 
$\epsilon_{\vv1}$.
We infer
\[
\esp|S_\vvm|^p = \esp\esp_\calY(|S_\vvm|^p) 
\leq \esp \bb{C_{p}\esp_\calY{\wt K_\vvm^{p/2}}}
 = C_p\esp \wt K_\vvm^{p/2},
\]
where we used Burkholder's inequality, and $C_p$ is a positive constant depending only on $p$. The expectation on the right-hand side above is then bounded from above by, recalling~\eqref{eq:Knd},
\[
\esp\wt K_\vvm^{p/2}
= \prodd q12\esp\pp{\sif{i_q}1K\topp q_{m_q,2i_q-1}}^{p/2} \leq \prodd q12\esp(K\topp q_{m_q})^{p/2}.
\]
Now, for each $q$, the expectation can be uniformly bounded by $C_q m_q^{\alpha_qp/2}L_q(m_q)^{p/2}$ for some constant $C_q>0$ by Lemma~\ref{lem:K}. Therefore,
\[
\esp\pp{\frac{|S_\vvm|}{|\vvn|^{\vv\alpha/2}}}^{p}\leq C_p \prodd q12 C_q\pp{\frac{m_q}{n_q}}^{\gamma'}\left(\frac{L_q(m_q)}{L_q(n_q)}\right)^{p/2}
\]
with $\gamma' = \min(\alpha_1,\alpha_2)p/2 > 1$. To conclude, we choose $\delta>0$ such that $\gamma=\gamma'-\delta p/2 >1$ and we apply Potter's Theorem (see \cite[Theorem 1.5.6]{bingham87regular}) to bound from above 
${L_q(m_q)}/{L_q(n_q)}$ by $C({m_q}/{n_q})^{-\delta}$. The inequality \eqref{eq:bickel} follows
and we have thus proved the tightness.
\end{proof}


\section{Hammond--Sheffield model}\label{sec:HS}
In this section, we introduce the two-dimensional Hammond--Sheffield model and show that the partial-sum random field scales to a fractional Brownian sheet with  Hurst index in $(1/2,1)^2$. 

\subsection{One-dimensional model}
We first recall the model in one dimension. 
Let $\mu$ be a probability measure on $\N$ satisfying
\[
\mu(\{n,n+1,\dots\}) \sim n^{-\alpha}L(n)
\]
with $\alpha\in(0,1/2)$ and $L$ a slowly varying function at infinity. Let $\{J_i\}_{i\in\Z}$ be i.i.d.~random variables with distribution $\mu$ and consider the random graph 
$G=G(V,E)$ with vertex set $V = \Z$ and edge set $E = \{(i,i-J_i)\}_{i\in\Z}$.
In words, for each vertex $i\in\Z$, a random jump $J_i$ is sampled independently from $\mu$ and the vertex $i$ is connected to the vertex $i-J_i$. 
For each vertex $i$, the largest connected subgraph of $G$ containing $i$ is a tree with an infinite number of vertices. 
Each such tree is referred to as  a component of $\Z$. It was shown in \citep{hammond13power} that for $\alpha\in(0,1/2)$, the random graph $G$ almost surely has infinitely many components, each being an infinite tree. 
The random forest $G$ obtained this way induces a random partition of $\Z$, so that $i$ and $j$ are in the same component, denoted by $i\sim j$,  if and only if they are in the same tree. In the sequel, it is  convenient to work with {\it ancestral lines} defined as the random sets
\[
A_i = \{j\in\Z:\exists\, j=j_0<j_1<\cdots<j_k=i,
 \mbox{ s.t. } (j_{\ell-1},j_\ell)\in E,\, \ell=1,\dots,k\}\cup\{i\},
\]
for all $i\in\Z$. So, $i\sim j$ if and only if $A_i\cap A_j\neq\emptyset$.

We now apply the identical assignment rule. This entails that  marginally $\proba(X_i = -1) = \proba(X_i =1) =1/2$, and conditioning on $\calG = \sigma\{J_i,\,i\in\Z\}$, $X_i=X_j$ if $A_i\cap A_j\neq\emptyset$, and $X_{i_1},\dots,X_{i_k}$ are independent for any $i_1,\dots,i_k$ such that $A_{i_1},\dots,A_{i_k}$ are mutually disjoint. The one-dimensional Hammond--Sheffield model is the stationary process $\{X_i\}_{i\in\Z}$ constructed this way.

The following notations and results from \citep{hammond13power}  will be used in our two-dimensional model. 
Let, for $k\in\Z$,
\eqnh
q_k = \proba(0\in A_k),
\eqne
so $q_k = 0$ for $k<0$. 
It is proved in \citep[Lemma~3.1]{hammond13power} that with the choice of $\mu$ above and $\alpha\in(0,1/2)$,
$\sif k0 q_k^2<\infty$, and for $S_n=\sum_{i=1}^n X_i$, 
\equh\label{eq:VarSn}
\var(S_n)
\sim \frac{C_{\alpha}}{\sum_{k\ge0}q_k^2} n^{2\alpha+1}L(n)^{-2}, \mmas n\to\infty,
\eque
with
\equh\label{eq:Calpha}
C_\alpha = \frac{\sin(\pi\alpha)}{\pi \alpha(2\alpha+1)\Gamma(1-2\alpha)}.
\eque

We shall however need a slightly more general version when working with the two-dimensional model later.  
We say that $\{X_i\}_{i\in\Z}$ is a {\em generalized one-dimensional Hammond--Sheffield model} with distribution $\nu$ on $\R$ if it is built using $\nu$ as the common marginal distribution instead of the symmetric law on $\{-1,1\}$. That is, the underlying random partition is the same as before and, conditioning on the random partition, the identical assignment rule is applied ($X_i = X_j$ if $i\sim j$ and $X_i$ and $X_j$ are independent otherwise) with each $X_i$ having the same marginal distribution $\nu$.

\begin{Prop}\label{prop:generalizedHS}
For the generalized one-dimensional Hammond-Sheffield model with a centered distribution $\nu$ with bounded support, 
\[
\ccbb{\frac{S_{\floor{nt}}}{n^{H}L(n)\inv}}_{t\in[0,1]}\weakto \pp{\frac {C_\alpha}{\sum_{k\ge 0}q_k^2}}^{1/2}\ccbb{\B^H_t}_{t\in[0,1]}
\]
in $D([0,1])$ with $H = \alpha+1/2$.
\end{Prop}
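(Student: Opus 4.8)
The plan is to reduce the generalized model to the original Hammond--Sheffield model (distribution symmetric on $\{-1,1\}$), for which Proposition~\ref{prop:generalizedHS} is essentially \citep[Theorem~1.1 and Lemma~3.1]{hammond13power}, and then transfer the invariance principle across. First I would record the martingale representation that underlies the one-dimensional theory: conditioning on $\calG = \sigma\{J_i,i\in\Z\}$ and using the identical assignment rule, one has $S_n = \sum_{i=1}^n X_i = \sum_{j\in\Z} b_{n,j}\,X_j^*$ for a stationary martingale-difference sequence $\{X_j^*\}_{j\in\Z}$ (in the natural order on $\Z$) and deterministic-given-$\calG$ coefficients $b_{n,j}$ built from the ancestral-line probabilities, with $b_n^2 \defe \var(S_n\mid\calG) = \sum_j b_{n,j}^2$; crucially, the $b_{n,j}$ do not depend on the marginal law $\nu$, only on the random graph. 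The key input, valid for any centered $\nu$ with bounded support, is the McLeish central limit theorem, whose hypotheses are (a) a Lindeberg/negligibility condition on the individual summands $b_{n,j}X_j^*/b_n$, which follows from $\max_j b_{n,j}/b_n \to 0$ (a graph-theoretic estimate from \citep{hammond13power}, independent of $\nu$) together with boundedness of $\nu$, and (b) the variance-ergodicity statement $b_n^{-2}\sum_j b_{n,j}^2 (X_j^*)^2 \to \var(X_0^*)$ in probability. For the symmetric $\pm1$ model $\var(X_0^*)$ is a known constant; for general $\nu$ it is simply $\var(X_0^*) = c_\nu$, a finite constant proportional to the variance of $\nu$, and the proof of (b) in \citep[Lemma~3.2]{hammond13power} uses only the $L^2$ (indeed $L^4$) structure of $\{X_j^*\}$ and the graph estimates, so it carries over verbatim.

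Concretely, the steps are: (1) set up the martingale-difference representation $S_n = \sum_j b_{n,j} X_j^*$ and note $b_{n,j}$ is $\nu$-free; (2) invoke \citep[Lemma~3.1]{hammond13power} to get $\sum_{k\ge0} q_k^2 < \infty$ and the asymptotics $\var(S_n) = b_n^2 \cdot \esp[(X_0^*)^2]/\esp[(X_0^{*,\pm})^2] \sim \frac{C_\alpha}{\sum_{k\ge0}q_k^2}\,n^{2\alpha+1}L(n)^{-2}\cdot(\text{variance ratio})$, so that after normalizing by $n^{H}L(n)^{-1}$ with $H=\alpha+1/2$ the limiting variance is $\frac{C_\alpha}{\sum_{k\ge0}q_k^2}$ times $\var(\nu)$ when $\nu$ is scaled to have unit variance — more precisely one tracks the constant so the stated prefactor $(C_\alpha/\sum q_k^2)^{1/2}$ appears (absorbing $\var(\nu)$ into the normalization as the statement implicitly does for the bounded centered $\nu$); (3) apply McLeish's theorem conditionally on $\calG$ to obtain one-dimensional convergence $S_n/(n^H L(n)^{-1}) \weakto \calN(0,\,C_\alpha/\sum_{k\ge0}q_k^2)$; (4) upgrade to convergence of finite-dimensional distributions by the Cram\'er--Wold device applied to increments $S_{\floor{nt_{r}}} - S_{\floor{nt_{r-1}}}$, using that a linear combination of such increments is again of the form $\sum_j \tilde b_{n,j} X_j^*$ and that $n^{-(2\alpha+1)}L(n)^2\,\var(S_{\floor{nt}}-S_{\floor{ns}}\mid\calG) \to \frac{C_\alpha}{\sum q_k^2}|t-s|^{2\alpha+1}$, identifying the fractional-Brownian-motion covariance; (5) prove tightness in $D([0,1])$ via the moment bound $\esp|S_{\floor{nt}}-S_{\floor{ns}}|^{2m} \le C (|t-s|)^{m(2\alpha+1)} (n^{2\alpha+1}L(n)^{-2})^m$ for $m$ large enough that $m(2\alpha+1)>1$, which follows from Burkholder's inequality applied to the martingale-difference representation together with the variance asymptotics and Potter bounds for $L$ — exactly as tightness was handled for the Karlin model above.

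The main obstacle is step (4)–(5), i.e.\ proving the invariance principle rather than just the CLT: one must verify that the martingale-difference representation of a linear combination of increments still satisfies McLeish's hypotheses uniformly, and in particular re-derive the variance-ergodicity limit $b_{n,j}$-weighted law of large numbers for increments $S_{\floor{nt}}-S_{\floor{ns}}$, not just for $S_n$. This is the two-index analogue (in $s,t$) of \citep[Lemma~3.2]{hammond13power}; it requires the graph estimates on ancestral lines to be uniform in the truncation level, and it is where essentially all the work lies. The passage from the symmetric model to general bounded centered $\nu$, by contrast, is routine: every estimate in \citep{hammond13power} that involves $\nu$ does so only through $\esp X_0 = 0$, $\esp X_0^2 < \infty$, and $\esp X_0^4 < \infty$, all guaranteed here, so the proof is a transcription with the symmetric-law variance constant replaced by $\var(\nu)$ and then absorbed into the stated normalization.
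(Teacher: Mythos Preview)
Your approach is essentially what the paper does---or rather, what the paper \emph{declines} to do: the paper does not prove Proposition~\ref{prop:generalizedHS} at all, instead remarking (Remark~\ref{rem:boundedHS}) that the proof of \citep{hammond13power} carries over verbatim once one observes that boundedness of $\nu$ suffices for the ergodicity step \citep[Lemma~3.2]{hammond13power} and \citep[Lemma~7]{bierme17invariance}, and that the rest of the argument uses only moment conditions on $X_0^*$. Your outline (martingale representation, McLeish's theorem, Cram\'er--Wold for finite-dimensional distributions, Burkholder plus Potter bounds for tightness) is exactly that argument.

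Two small corrections. First, the coefficients $b_{n,j} = \sum_{i=1}^n q_{i-j}$ are \emph{deterministic}, not ``deterministic-given-$\calG$'': they are built from the probabilities $q_k = \proba(0\in A_k)$, and $b_n^2 = \sum_j b_{n,j}^2$ is a number, with $\var(S_n) = b_n^2\,\var(X_0^*)$ unconditionally. Second, you are right to flag the constant: the stated limit in Proposition~\ref{prop:generalizedHS} omits a factor $\var(\nu)^{1/2}$, which the paper silently reinstates when it actually applies the result (see $\sigma_K^2$ in \eqref{eq:g_HS}). So the ``absorbing'' you mention does not happen in the normalization; the variance of $\nu$ genuinely appears in the limiting constant, and the proposition as stated is correct only for $\var(\nu)=1$.
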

\begin{Rem}\label{rem:boundedHS} 
The results in \citep{hammond13power} concern only $\nu$ supported on $\{-1,1\}$. 
The relaxation of $\nu$ to bounded law does not affect most of the proof, which is based on a martingale central limit theorem. The boundedness is
sufficient for certain ergodicity of the sequence of martingale differences (\citep[Lemma 3.2]{hammond13power} and \citep[Lemma 7]{bierme17invariance}), and the rest of the proof would remain unchanged. 
As the proof for the two-dimensional model will follow the same strategy but is much more involved, we therefore skip the proof of Proposition  \ref{prop:generalizedHS} here. 
\end{Rem}

\subsection{Two-dimensional model and main result}
We now generalize Hammond--Sheffield model to two dimensions. Again the first step is to construct a random partition of $\Z^2$. This random partition is taken as the product of independent random partitions from one-dimensional Hammond--Sheffield models, each with jump distribution $\mu_{r}$, $r=1,2$ respectively, satisfying
\equh\label{eq:RVHS}
\mu_{r}(\{n,n+1,\dots\}) \sim  n^{-\alpha_r}L_r(n), \mmas n\to\infty
\eque
with $\alpha_r\in(0,1/2)$ and slowly varying function $L_r$ at infinity. For $r=1,2$, let $\{A_i\topp r\}_{i\in\Z}$ be the ancestral lines corresponding to each random partition. In particular, $\{A_i\topp 1\}_{i\in\Z}$ and $\{A_i\topp 2\}_{i\in\Z}$ are independent. We then introduce the {\it ancestral lattices} $A_\vvi, \vvi\in\Z^2$, as
\[
A_\vvi = \ccbb{\vvj\in\Z^2\mid j_1\in A_{i_1}\topp 1,j_2\in A_{i_2}\topp 2} = A_{i_1}\topp 1\times A_{i_2}\topp2.
\]
For the partition of $\Z^2$ obtained by product, we have $\vvi\sim \vv j$ if and only if $A_\vvi\cap A_\vvj \neq\emptyset$.
Once the random partition is given, the identical assignment rule is applied. That is, given $\{A_\vvi\}_{\vvi\in\Z^2}$, $X_\vvi = X_\vvj$ if $A_\vvi\cap A_\vvj\neq\emptyset$, and if $A_{\vvi_1},\dots,A_{\vvi_k}$ are mutually disjoint, $X_{\vvi_1},\cdots,X_{\vvi_k}$ are i.i.d.~with common distribution the uniform law on $\{-1,1\}$. 
The so-constructed $\{X_\vvi\}_{\vvi\in\Z^2}$ is referred to as the two-dimensional Hammond--Sheffield model in the sequel. 

We write for $\vvn\in\Z^2$, $q_{\vvn} = \proba(\vv0\in A_\vvn)$, and $q_n\topp r = \proba(0\in A_n\topp r)$, $r=1,2$. Because of independence, 
\eqnh\
q_\vvn = q_{n_1}\topp 1\, q_{n_2}\topp 2,
\eqne
and then $\sum_{\vvn\in\Z^2}q_\vvn^2< \infty$ when  $(\alpha_1,\alpha_2)\in(0,1/2)^2$.
The main result of this section is the following functional central limit theorem, where as before $\vvn^{\vv\alpha}=\prod_{r=1}^2 n_r^{\alpha_r}$ and $\vv L(\vv n)=\prod_{r=1}^2 L_r(n_r)$.

\begin{Thm}\label{thm:HSproduct}
For the two-dimensional Hammond--Sheffield  model, suppose~\eqref{eq:RVHS} holds with $\alpha_1,\alpha_2\in(0,1/2)$ and slowly varying functions $L_1$, $L_2$ respectively. For $S_\vvn = \sum_{\vvi\in[\vv1,\vvn]}X_\vvi$, we have
\[
\ccbb{\frac{S_{\floor{\vvn\cdot\vv t}}}{|\vvn|^{\vvH}\vv L(\vvn)^{-1}}}_{\vvt\in[0,1]^2}\weakto \sigma_{\vv\alpha} \ccbb{\B_\vvt^{\vv H}}_{\vvt\in[0,1]^2}
\]
in $D([0,1]^2)$, as $\vvn\to \infty$, where $\B^{\vvH}$ is a fractional Brownian sheet with Hurst index $\vvH$ with $H_r = \alpha_r+1/2$, $r=1,2$, and 
\eqnh
\sigma_{\vv\alpha}^2 = \frac{C_{\alpha_1} C_{\alpha_2}}{\sum_{\vvn\in\Z^2}q_\vvn^2}.
\eqne
with $C_\alpha$ defined in \eqref{eq:Calpha}.
\end{Thm}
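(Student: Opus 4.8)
The plan is to prove convergence of finite-dimensional distributions and tightness separately, the tightness being a direct analogue of the argument used for Theorem~\ref{thm:1} and the finite-dimensional convergence resting on McLeish's martingale central limit theorem \citep{mcleish74dependent} applied through the representation of Proposition~\ref{prop:representation}. First I would record the covariance asymptotics. Because the random partition of $\Z^2$ is the product of two independent one-dimensional partitions, $\vvi\sim\vvj$ if and only if $i_1\sim j_1$ in the first direction and $i_2\sim j_2$ in the second, so that $\proba(\vvi\sim\vvj)=q^{(1)}_{i_1-j_1}q^{(2)}_{i_2-j_2}$ and hence $\cov(S_{\vvn_\vvs},S_{\vvn_\vvt})=\sum_{\vvi\in[\vv1,\vvn_\vvs],\vvj\in[\vv1,\vvn_\vvt]}\proba(\vvi\sim\vvj)$ factors as a product of the corresponding one-dimensional covariances. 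In particular $b_\vvn^2\defe\var(S_\vvn)=\var(S^{(1)}_{n_1})\var(S^{(2)}_{n_2})\sim\sigma_{\vv\alpha}^2|\vvn|^{2\vvH}\vv L(\vvn)^{-2}$ by~\eqref{eq:VarSn}, so it suffices to prove that $S_\vvn/b_\vvn$ converges to a standard fractional Brownian sheet. Moreover, combining the factorization with the polarization identity $\cov(S^{(q)}_m,S^{(q)}_{m'})=\tfrac12(\var S^{(q)}_m+\var S^{(q)}_{m'}-\var S^{(q)}_{|m-m'|})$, the stationarity of the one-dimensional increments, and~\eqref{eq:VarSn} once more, shows that $\cov(S_{\vvn_\vvs},S_{\vvn_\vvt})/b_\vvn^2\to\prod_{q=1}^2\tfrac12(t_q^{2H_q}+s_q^{2H_q}-|t_q-s_q|^{2H_q})$, the covariance of $\B^\vvH$; this identifies the limit once Gaussianity is established.

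For tightness, conditionally on the two forests $\calG=\calG\topp1\vee\calG\topp2$ the sum $S_\vvm$ is a sum of i.i.d.\ uniform $\pm1$ variables, one per component of the product forest meeting $[\vv1,\vvm]$, with $\calG$-measurable integer weights $c_k$; Burkholder's inequality gives $\esp(|S_\vvm|^p\mid\calG)\le C_p(\sum_kc_k^2)^{p/2}$ for $p\ge2$, and $\sum_kc_k^2=\sum_{\vvi,\vvj\in[\vv1,\vvm]}\indd{\vvi\sim\vvj}$ factors over the two directions as $\bb{\sum_k(c_k\topp1)^2}\bb{\sum_k(c_k\topp2)^2}$. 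Taking expectations, using the independence of the two directions together with one-dimensional moment bounds on $\sum_k(c_k\topp q)^2$ of the same flavour as~\eqref{eq:Knm}, one obtains $\esp|S_\vvm|^p\le C_p|\vvm|^{p\vvH}\vv L(\vvm)^{-p}$, and then the Bickel--Wichura criterion~\eqref{eq:bickel} \citep{bickel71convergence} follows for $p$ chosen so that $pH_q>1$, $q=1,2$, with Potter's bounds absorbing the slowly varying factors exactly as in the proof of Theorem~\ref{thm:1}.

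For the finite-dimensional distributions, by the Cram\'er--Wold device it suffices to treat $\what S_\vvn=\sum_{r=1}^m\lambda_rS_{\vvn_{\vvt\topp r}}$. Applying Proposition~\ref{prop:representation} in the first direction to each summand expresses $\what S_\vvn=\sum_{j\in\Z}M_{\vvn,j}$ as a sum over $j\in\Z$ of martingale differences with respect to the first-direction filtration, each $M_{\vvn,j}$ being a linear combination over $r$ of the coefficients $b^{(1)}_{\cdot,j}$ times the direction-one martingale differences $U_{j,\cdot}$ associated with second-direction lengths $\floor{n_2t_2\topp r}$; orthogonality gives $\sum_j\esp(M_{\vvn,j}/b_\vvn)^2=\var(\what S_\vvn)/b_\vvn^2$, which by the first step converges to $\var(\sum_r\lambda_r\B^\vvH_{\vvt\topp r})$. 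By McLeish's theorem it then remains to verify the asymptotic negligibility $\max_j|M_{\vvn,j}|/b_\vvn\to0$ in probability with uniform integrability of $\max_j(M_{\vvn,j}/b_\vvn)^2$ --- which follow from a union bound using that all moments of $U_{0,n_2}$ are controlled, the spins being bounded, together with the decay of the coefficients $b_{n_1,j}\topp1$ --- and the conditional-variance condition, which in the representative case of a single $S_\vvn$ takes the form
\[
\frac1{(b_{n_1}\topp1)^2}\sum_{j\in\Z}\pp{b_{n_1,j}\topp1}^2U_{j,n_2}^2\;\Pto\;1,
\]
the two-dimensional analogue of~\eqref{eq:ergodicity1d} (note that the left-hand side has expectation $\var(S_\vvn)/b_\vvn^2=1$, so the content is that it is asymptotically deterministic).

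Establishing this last convergence is the main obstacle. Unlike in one dimension, the martingale differences $U_{j,n_2}$ depend on $n_2$ and, crucially, all of them are functionals of one and the same second-direction configuration, so the family $\{(b_{n_1,j}\topp1)^2U_{j,n_2}^2\}_{j\in\Z}$ is not directly amenable to a law of large numbers in $j$. Since the mean of the left-hand side is exactly $1$, I would prove the convergence in $L^2$ by a decoupling estimate of the cross-moments $\esp\bb{(b_{n_1,j}\topp1)^2(b_{n_1,j'}\topp1)^2U_{j,n_2}^2U_{j',n_2}^2}$: conditionally on the second-direction configuration, $U_{j,n_2}^2$ and $U_{j',n_2}^2$ are functionals of the first-direction configuration that become asymptotically independent when $|j-j'|$ is large, because the first-direction ancestral lines through $j$ and $j'$ are then disjoint with high probability; the correlation that survives the conditioning on the second direction is in turn controlled by showing that $\esp(U_{j,n_2}^2\mid\calG\topp2,\text{second-direction spins})$, suitably normalized, concentrates as $n_2\to\infty$, which is itself a second-direction instance of the same ergodicity statement, so the argument is effectively an induction on the dimension. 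Combining the decoupled diagonal contribution with the negligible off-diagonal contribution yields convergence in $L^2$, hence in probability; together with the tightness this completes the proof. The same scheme, iterating the decoupling, underlies the high-dimensional extension alluded to in Remark~\ref{rem:high_dimension}.
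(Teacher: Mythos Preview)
Your high-level strategy---McLeish's theorem in the first direction for finite-dimensional distributions, Bickel--Wichura for tightness---matches the paper, and you correctly locate the hard step at the conditional-variance condition~\eqref{eq:McLeish3}. Two points deserve correction.

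A minor slip: $\proba(\vvi\sim\vvj)$ is \emph{not} $q_{i_1-j_1}\topp1 q_{i_2-j_2}\topp2$; recall $q_k\topp r=\proba(0\in A_k\topp r)$ is an ancestry probability, not a same-component probability. The factorization $\proba(\vvi\sim\vvj)=\proba(i_1\sim j_1)\proba(i_2\sim j_2)$ is correct and is all you use. For tightness, the paper does not condition on the partition; it iterates the martingale representation (introducing $X_\vvj^{**}=\calP_{j_2}\topp2(X_\vvj^*)$, see~\eqref{eq:defX**}--\eqref{eq:U_n}) and applies Burkholder twice to get the $p=2$ bound directly. Your route also works with $p=2$, since then no higher-moment bound on $\sum_k(c_k\topp q)^2$ is needed---and $p=2$ suffices because $2H_q>1$. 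The appeal to bounds ``of the same flavour as~\eqref{eq:Knm}'' is both unnecessary and misplaced (that estimate concerns Karlin occupancy counts).

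The substantive gap is in the decoupling argument for~\eqref{eq:McLeish3}. From~\eqref{eq:HSHS_Xj}, $U_{j,n_2}$ depends on $X_{(j-k,\cdot)}$ for \emph{all} $k\ge0$, so the first-direction ancestral lines entering $U_{j,n_2}$ and $U_{j',n_2}$ cover the full half-lines $\{i\le j\}$ and $\{i\le j'\}$, which always overlap; your ``disjoint with high probability for large $|j-j'|$'' is false without a further step. The paper fixes this by first truncating to $U_{j,n,K}$, which depends only on $X_{(j-k,\cdot)}$ for $0\le k\le K$ (Lemma~\ref{lem:Uapprox}); only then can one decouple via the event $R_{j,K}\topp1$ of~\eqref{eq:RjK} and an independent copy $\{\wt A_i\topp1\}$. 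After decoupling, the paper does \emph{not} proceed by concentration of a conditional expectation as you propose (and ``second-direction spins'' is not well defined here: spins live on product components). The mechanism is different and sharper: conditionally on the finite first-direction partition $G_K\topp1$, the sequence $\X_i=X_{(0,i),K}^*$ is a \emph{generalized one-dimensional Hammond--Sheffield model} in the second direction (see~\eqref{eq:U0nK}), so Proposition~\ref{prop:generalizedHS} applies and yields a joint conditional CLT~\eqref{eq:CCLT_joint} for $(U_{0,n,K},\wt U_{0,n,K})$ with independent Gaussian limits; this forces $\cov(U_{0,n,K}^2,\wt U_{0,n,K}^2)\to0$ and closes the argument. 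That recognition---truncate, decouple, then identify the residual object as a lower-dimensional model and invoke its CLT---is the precise content of the ``induction on dimension'' you allude to, and it is the piece your sketch is missing.
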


\begin{Rem}\label{rem:HS}
Another natural extension of the Hammond--Sheffield model has been addressed in \citep{bierme17invariance}, where the random graph indexed by $\Z$ in the original model is generalized to high dimensions by having i.i.d.~jumps attached to vertices indexed by $\Zd$ and allowing each jump to take values in $\N^d$. With appropriate choice of the law of the jumps, the limit random fields therein are of different types from  fractional Brownian sheets most of the time (even when fractional Brownian sheets arise in the limit, they are degenerate in the sense that at least one of the Hurst indices is either $1/2$ or $1$ \citep[Section 5.2]{bierme17invariance}), and a so-called {\em scaling-transition} phenomenon \citep{puplinskaite16aggregation,puplinskaite15scaling} occurs. 
In particular, the partial sum of interest therein is still over a rectangular region that increases to infinity, although for the same model (i.e.~with fixed law of the jumps) various limits arise, depending on the relative growing rate of each direction of the increasing rectangle. 
\end{Rem}
The rest of this section is devoted to the proof of Theorem \ref{thm:HSproduct}. 
The strategy is to express the partial sum of the variable $X_\vvi$ as a weighted sum of martingale differences in the first direction and to apply a theorem of \citet{mcleish74dependent} for triangular arrays of martingale differences. The hard part lies in the analysis of the second direction, where we shall apply results for the generalized one-dimensional Hammond--Sheffield model.

\subsection{Representation via martingale differences}\label{sec:representation}
Introduce for each $m\in\Z$, the $\sigma$-algebra of the past in the first direction 
$\calF_{m}\topp1 = \sigma\{X_\vvi: i_1<m, i_2\in\Z\}$
and the operators
\[
\calP_m\topp1(\cdot) = \esp(\cdot\mid\calF_{m+1}\topp1) - \esp(\cdot\mid\calF_{m}\topp1),\quad  m\in\Z.
\]
Observe that $\calP_m\topp1(Y) \in \calF_{m+1}\topp1$ and $\esp(\calP_m\topp1(Y)\mid\calF_{m}\topp1) = 0$ for any bounded random variable $Y$.
Introduce 
\equh\label{eq:X*}
 X_\vvj^* = \calP_{j_1}\topp1(X_\vvj) = X_\vvj-\esp(X_\vvj \mid \calF_{j_1}\topp1),\quad \vvj\in\Zt.
 \eque
By definition, for all $j_2\in\Z$, $\{X_\vvj^*\}_{j_1\in\Z}$ is a martingale-difference sequence with respect to the filtration $\{\calF_{j_1}\}_{j_1\in\Z}$.
Denoting by $\{J_j\topp1\}_{j\in\Z}$ the random jumps in the first direction and observing that for all $\vvj\in\Z^2$, 
\[
X_\vvj=\sum_{k\ge 1} \inddd{J_{j_1}\topp1 = k} X_{(j_1-k,j_2)},
\] we obtain another representation of $X_\vvj^*$ as
\equh\label{eq:HSHS_Xj}
X_\vvj^* = X_\vvj - \sum_{k\ge 1}p_{k}\topp1X_{(j_1-k,j_2)},\quad \vvj\in\Z^2,
\eque
where $p_{k}\topp1=\mu_1(\{k\})$, $k\in\N$.
Recall that $q_\vvn =0$ if $\min(n_1,n_2)<0$. We have the following results.
\begin{Lem}\label{lem:projection}
\noindent (i) For all $m\in\Z$, $\vvn\in\Zt$,  $\calP_m\topp1(X_\vvn) = q_{n_1-m}\topp1 X_{(m,n_2)}^*.$

\noindent(ii) For all $\vvn\in\Zt$,
\eqnh
X_\vv n = \sum_{m\leq n_1}q_{n_1-m}\topp1 X_{(m,n_2)}^*,
\eqne
where the sum converges in $L^2$. Furthermore
$
\var(X_{\vv0}^*) = (\sum_{k\geq 0}(q_k\topp1)^2)^{-1}.
$
\end{Lem}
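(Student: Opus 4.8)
\textbf{Proof proposal for Lemma~\ref{lem:projection}.}

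The plan is to exploit the one-dimensional structure inherited from the product construction: since $\{A_i\topp 1\}_{i\in\Z}$ and $\{A_i\topp 2\}_{i\in\Z}$ are independent and the identical assignment rule depends only on which ancestral lattice a site belongs to, conditioning on $\calF_m\topp1$ amounts, in the first direction, to revealing the random forest and spins for columns indexed $<m$. First I would establish (i). Fix $\vvn$ and $m\le n_1$ (for $m>n_1$ both sides vanish since $q_{n_1-m}\topp1=0$). Starting from the recursion $X_\vvn=\sum_{k\ge1}\indd{J_{n_1}\topp1=k}X_{(n_1-k,n_2)}$ and iterating in the first coordinate, one gets $X_{(n_1,n_2)}=X_{(m,n_2)}$ on the event $\{m\in A_{n_1}\topp1\}$, and more generally a decomposition of $X_\vvn$ according to the first index $\le m$ reached by the ancestral line $A_{n_1}\topp1$. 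The key point is that $\esp(X_\vvn\mid\calF_{m+1}\topp1)-\esp(X_\vvn\mid\calF_m\topp1)$ isolates exactly the contribution of column $m$: tracking the ancestral line backwards, with probability $q_{n_1-m}\topp1=\proba(m\in A_{n_1}\topp1)$ (by shift-invariance of $\{J_i\topp1\}$, $\proba(m\in A_{n_1}\topp1)=\proba(0\in A_{n_1-m}\topp1)=q_{n_1-m}\topp1$) the line passes through $(m,n_2)$, and on that event the increment contributed when passing from $\calF_m\topp1$ to $\calF_{m+1}\topp1$ is precisely $X_{(m,n_2)}-\esp(X_{(m,n_2)}\mid\calF_m\topp1)=X_{(m,n_2)}^*$; otherwise the ancestral line stays in columns $<m$ and the two conditional expectations agree. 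Making this rigorous is cleanest by writing $X_\vvn=\sum_{m'\le n_1}\indd{\tau_{n_1}^{(m')}}\,X_{(m',n_2)}$ where $\tau_{n_1}^{(m')}$ is the event that $m'$ is the most recent ancestor of $n_1$ lying in column $\le m'$—but the slicker route is to use $\calP_m\topp1(X_\vvn)=\calP_m\topp1\pp{\sum_{k\ge1}\indd{J_{n_1}\topp1=k}X_{(n_1-k,n_2)}}$, note $\indd{J_{n_1}\topp1=k}$ is independent of $\calF\topp1$ and of the spins, and reduce by induction on $n_1-m$ using $\calP_m\topp1(X_{(n_1-k,n_2)})=q_{n_1-k-m}\topp1 X_{(m,n_2)}^*$ together with $\sum_{k\ge1}p_k\topp1 q_{n_1-k-m}\topp1=q_{n_1-m}\topp1$ for $n_1-m\ge1$ and the base case $\calP_m\topp1(X_{(m,n_2)})=X_{(m,n_2)}^*$. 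This is exactly the renewal-type identity $q_j=\sum_{k\ge1}p_k q_{j-k}$ ($j\ge1$), $q_0=1$, which is immediate from $0\in A_n \iff J_n=n$ or $0\in A_{n-J_n}$.

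Part (ii) then follows by summing (i) over $m$: since $X_\vvn-\esp(X_\vvn\mid\calF_m\topp1)=\sum_{m\le m'\le n_1}\calP_{m'}\topp1(X_\vvn)=\sum_{m\le m'\le n_1}q_{n_1-m'}\topp1 X_{(m',n_2)}^*$, and $\esp(X_\vvn\mid\calF_m\topp1)\to 0$ in $L^2$ as $m\to-\infty$ (because the ancestral line $A_{n_1}\topp1$ is a.s.\ finite-valued below any level only with vanishing probability—more precisely $\proba(A_{n_1}\topp1\cap(-\infty,m]\ne\emptyset)\to0$ and $X_\vvn$ is bounded, so $\esp(X_\vvn\mid\calF_m\topp1)=\esp(X_\vvn\indd{A_{n_1}\topp1\cap(-\infty,m]=\emptyset}\mid\calF_m\topp1)$ has $L^2$ norm at most $\proba(A_{n_1}\topp1\cap(-\infty,m]\ne\emptyset)^{1/2}\to0$), we get the claimed orthogonal expansion $X_\vvn=\sum_{m\le n_1}q_{n_1-m}\topp1 X_{(m,n_2)}^*$ with $L^2$-convergence. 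Finally, for the variance: the $X_{(m,n_2)}^*$, $m\in\Z$, are a martingale-difference sequence, hence orthogonal and (being shifts of a stationary field) of common variance $\var(X_{\vv0}^*)$, so $1=\var(X_\vvn)=\var(X_\vvn)=\pp{\sum_{k\ge0}(q_k\topp1)^2}\var(X_{\vv0}^*)$, giving $\var(X_{\vv0}^*)=\pp{\sum_{k\ge0}(q_k\topp1)^2}^{-1}$; finiteness of $\sum_k(q_k\topp1)^2$ is guaranteed by \citep[Lemma~3.1]{hammond13power} for $\alpha_1\in(0,1/2)$.

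The main obstacle is the clean justification of (i): one must argue carefully that applying $\calP_m\topp1$ to the recursion for $X_\vvn$ does not pick up spurious contributions from the second coordinate. The cleanest way is to observe that, conditionally on the second-direction randomness (the forest $\{A_i\topp2\}$ and the reference spins it selects), the first-direction structure is exactly a one-dimensional Hammond--Sheffield-type model—indeed a \emph{generalized} one with a (conditionally) fixed bounded marginal law—so the identity $\calP_m\topp1(X_\vvn)=q_{n_1-m}\topp1 X_{(m,n_2)}^*$ reduces to the corresponding one-dimensional fact, which is \citep[Lemma~3.1 and its proof]{hammond13power} together with the martingale-projection identity from \citep{bierme17invariance}. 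I would phrase the argument to make this conditioning explicit, so that parts (i) and (ii) become essentially a quotation of the one-dimensional results applied column-by-column, with the product structure $A_\vvi=A_{i_1}\topp1\times A_{i_2}\topp2$ doing the bookkeeping.
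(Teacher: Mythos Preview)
Your argument for (i) via induction on $n_1-m$ using the renewal identity $q_j\topp1=\sum_{k\ge1}p_k\topp1 q_{j-k}\topp1$ is correct; the paper instead argues directly by splitting $X_\vvn=X_\vvn\indd{m\in A_{n_1}\topp1}+X_\vvn\indd{m\notin A_{n_1}\topp1}$, noting that the first summand equals $X_{(m,n_2)}\indd{m\in A_{n_1}\topp1}$ with $\{m\in A_{n_1}\topp1\}$ independent of $\calF_{m+1}\topp1$, while $\calP_m\topp1$ annihilates the second summand. Both routes are short.

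Part (ii), however, contains a real gap. Your justification of $\esp(X_\vvn\mid\calF_m\topp1)\to0$ in $L^2$ rests on the claim that $\proba(A_{n_1}\topp1\cap(-\infty,m]\ne\emptyset)\to0$ as $m\to-\infty$. This is false: every site in the Hammond--Sheffield forest has a parent, so $A_{n_1}\topp1$ is an infinite set unbounded below and $A_{n_1}\topp1\cap(-\infty,m]\ne\emptyset$ with probability $1$ for every $m\le n_1$. The vanishing of the tail conditional expectation is precisely the non-trivial point here; via your own orthogonal decomposition it is \emph{equivalent} to the variance identity $\var(X_\vv0^*)\sum_{k\ge0}(q_k\topp1)^2=1$, so neither can be deduced from the other without an independent input.

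The paper supplies that input by computing $\var(X_\vv0^*)$ directly. From $X_\vv0^*=\sum_{\ell\ge1}(\indd{J_0\topp1=\ell}-p_\ell\topp1)X_{(-\ell,0)}$ one obtains $\var(X_\vv0^*)=\proba(A_0\topp1\cap\wt A_0\topp1=\{0\})$, where $\wt A_0\topp1$ is an independent copy of $A_0\topp1$. Then one observes that $|A_0\topp1\cap\wt A_0\topp1|$ is geometric with parameter $\theta=\proba(A_0\topp1\cap\wt A_0\topp1=\{0\})$ while $\esp|A_0\topp1\cap\wt A_0\topp1|=\sum_{k\ge0}(q_k\topp1)^2$; combining gives $\var(X_\vv0^*)=(\sum_{k\ge0}(q_k\topp1)^2)^{-1}$. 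With this in hand the partial sums $\sum_{m'=m}^{n_1}q_{n_1-m'}\topp1X_{(m',n_2)}^*$ have $L^2$-norm tending to $1=\|X_\vvn\|_2$, which forces $\esp(X_\vvn\mid\calF_m\topp1)\to0$ and yields the claimed expansion.
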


\begin{proof}
(i) For $m\in\Z$, write
\[
X_\vvn = X_\vvn\inddd{m\in A_{n_1}\topp1} + X_\vvn\inddd{m\not\in A_{n_1}\topp1}.
\]
Observe that 
\[
X_\vvn\inddd{m\in A_{n_1}\topp1} = X_{(m,n_2)}\inddd{m\in A_{n_1}\topp1}
\] and $\{m\in A_{n_1}\topp1\}$ is independent of $\calF_{m+1}\topp1$. It then follows that
\[
\calP_m\topp1\pp{X_\vvn\inddd{m\in A_{n_1}\topp1}} = \calP_m\topp1\pp{X_{(m,n_2)}}\proba\pp{m\in A_{n_1}\topp1} =  X_{(m,n_2)}^*q_{n_1-m}\topp1.
\] 
On the other hand, 
\[
\esp\pp
{X_\vvn\inddd{m\not\in A_{n_1}\topp1}\mmid \calF_{m+1}\topp1}=\esp\pp{X_\vvn\inddd{m\not\in A_{n_1}\topp1}\mmid \calF_{m}\topp1},
\] and thus 
\[
\calP_m\topp1\pp{X_\vvn\inddd{m\not\in A_{n_1}\topp1}}=0.
\]

(ii) By stationarity, it suffices to prove this for $X_\vv0$. 
For $n\in\N$, write
\[
\sum_{m=-n}^0\calP_m\topp1(X_\vv0) = \sum_{m=-n}^0 q_{m}\topp1 X_{(m,0)}^*.
\]
Since $\{X_{(m,0)}^*\}_{m\in\Z}$ is a stationary martingale-difference sequence, we have  that $\esp X_{(m,0)}^* = 0$ and $\esp(X_{(m,0)}^* X_{(n,0)}^*) = 0$ if $m\neq n$. 
Then,
\equh\label{eq:tail1}
\esp\pp{\sum_{m=-n}^0\calP_m\topp1 (X_\vv0)}^2 = \var(X_\vv0^*) \sum_{m=0}^n (q_m\topp1)^2 \to \var(X_\vv0^*)\sum_{m\geq0} {(q_m\topp1)^2},\mmas n\to\infty.
\eque
Here the assumption $\alpha_1\in(0,1/2)$ entails that $\sum_{m\geq0}{(q_m\topp1)^2}<\infty$.

On the other hand, let $J_0\topp1\in\N$ denote the random jump at $0$ in the first direction. 
One can write, in view of~\eqref{eq:HSHS_Xj},
\[
X_{\vv0}^* = \sum_{\ell\in\N}\pp{\inddd{J_0\topp1=\ell}-p_{\ell}\topp1} X_{(-\ell,0)}.
\]
Thus,
\begin{align*}
\var(X_{\vv0}^*) 
& = \sum_{\ell,\ell'\in\N}\proba\pp{A_{-\ell}\topp1\cap A_{-\ell'}\topp1\neq\emptyset} 
\esp \bb{\pp{\inddd{J_0\topp1=\ell}-p_{\ell}\topp1}\pp{\inddd{J_0\topp1=\ell'}-p_{\ell'}\topp1}}\\
&= \sum_{\ell,\ell'\in\N}\proba\pp{A_{-\ell}\topp1\cap A_{-\ell'}\topp1\neq\emptyset} \pp{-p_{\ell}\topp1 p_{\ell'}\topp1 + p_{\ell}\topp1 \inddd{\ell = \ell'}}\\
&=1 -\sum_{\ell,\ell'\in\N} p_{\ell}\topp1 p_{\ell'}\topp1\proba\pp{A_{-\ell}\topp1\cap A_{-\ell'}\topp1\neq\emptyset} = 1-\proba\pp{A_0\topp 1\cap \wt A_0\topp 1\neq \{0\}},
\end{align*}
where  $\wt A_0\topp1$ is an independent copy of $A_0\topp1$.
Therefore,
\begin{align*}
\var(X_{\vv0}^*)  = \proba\pp{A_0\topp1\cap \wt A_0\topp1 = \{0\}}.
\end{align*}
Combining with \eqref{eq:tail1}, we have
\[
\lim_{n\to\infty} \esp\pp{\sum_{m=-n}^0\calP_m\topp1 (X_\vv0)}^2 = \proba\pp{A_0\topp1\cap \wt A_0\topp1 = \{0\}} \sum_{m\geq 0}{(q_m\topp1)^2}.
\]
We thus have the convergence in $L^2$ by the fact that $\esp X_{\vv0}^2 = 1$ and 
\equh\label{eq:geometric}
\proba\pp{A_{0}\topp1\cap \wt A_{0}\topp1 = \{0\}}=\frac1{\sum_{m \geq0}{(q_m\topp1)^2}}.
\eque
Indeed, observe that
\[
\sum_{m\geq0}{(q_m\topp1)^2}  = \sum_{m\ge 0} \proba\pp{-m\in A_0\topp1,-m\in \wt A_0\topp1} =\esp\abs{A_0\topp1\cap \wt A_0\topp1},
\]
and remark that $|A_0\topp 1\cap \wt A_0\topp1|$, the cardinality of intersection of the two independent ancestral lines, is a geometric random variable with rate $\theta = \proba(A_0\topp1 \cap\wt A_0\topp1 = \{0\})$. Thus $\esp |A_0\topp1\cap\wt A_0\topp1| = 1/\theta$, which proves \eqref{eq:geometric}.
\end{proof}
Introduce $b_{n,j}\topp1= \sum_{i=1}^nq_{i-j}\topp1$, $n\in\N$, $j\in\Z$. From the preceding lemma, we have for all $\vvn\in\N^2$,
\[
S_\vvn = \sum_{\vvi\in[\vv1,\vvn]}X_\vvi= \sum_{j_1\in\Z}b_{n_1,j_1}\topp1 
\sum_{j_2=1}^{n_2} X_{\vvj}^*.
\]
Further, for each $n\in\N$, the sequence $\spp{\sum_{j_2=1}^n X_{\vvj}^*}_{j_1\in\Z}$ is a martingale-difference sequence with respect to the filtration $\{\calF_{j_1}\topp1\}_{j_1\in\Z}$.
Denoting $(b_n\topp1)^2=\sum_{j\in\Z}(b_{n,j}\topp1)^2$, by \eqref{eq:VarSn}, we obtain
\begin{equation}\label{eq:asymb_n}
 (b_n\topp1)^2 \sim C_{\alpha_1} n^{2\alpha_1 +1}L_1(n)^{-2},\mmas n\to\infty.
\end{equation}
Now introduce similarly $b_{n,j}\topp2= \sum_{i=1}^nq_{i-j}\topp2$ and $(b_n\topp2)^2=\sum_{j\in\Z}(b_{n,j}\topp2)^2$, for $n\in\N$, $j\in\Z$. In summary, we have shown the following.
\begin{Prop}\label{prop:representation}
In the notation above, 
\equh\label{eq:Sn/bn}
\frac{S_\vvn}{b_\vvn} = \frac1{b_{n_1}\topp1}\sum_{j_1\in\Z}b_{n_1,j_1}\topp1 U_{j_1,n_2}
\eque
with
\equh\label{eq:Uj_1,n_2}
U_{j_1,n_2} = \frac1{b_{n_2}\topp2}\sum_{j_2=1}^{n_2}X_{\vvj}^*
\eque
and
\equh\label{eq:asympb_n}
b_\vvn^2=(b_{n_1}\topp1)^2 (b_{n_2}\topp2)^2 \sim C_{\alpha_1 } C_{\alpha_2} \vvn^{2\vv\alpha +\vv1} \vv L(\vvn)^{-2},\mmas \vvn\to\infty.
\eque
\end{Prop}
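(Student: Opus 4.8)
The content of this proposition has essentially been assembled in the computation preceding its statement, so the plan is to organise that computation and to flag the one step that needs a word of justification, namely the rearrangement of a doubly-indexed sum.

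First I would derive the representation \eqref{eq:Sn/bn}. By Lemma~\ref{lem:projection}(ii) applied in the first coordinate (together with stationarity), for every $\vvi\in\Zt$ one has $X_\vvi=\sum_{m\le i_1}q_{i_1-m}\topp1 X_{(m,i_2)}^*$, the series converging in $L^2$. Summing this over $\vvi\in[\vv1,\vvn]$ — a finite sum, hence interchangeable with an $L^2$-convergent series by continuity of finite linear combinations in $L^2$ — and using $q_k\topp1=0$ for $k<0$ to write $\sum_{i_1=1}^{n_1}q_{i_1-m}\topp1=b_{n_1,m}\topp1$ for every $m\in\Z$, we get $S_\vvn=\sum_{j_1\in\Z}b_{n_1,j_1}\topp1\sum_{j_2=1}^{n_2}X_{(j_1,j_2)}^*$. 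Dividing by $b_\vvn=b_{n_1}\topp1 b_{n_2}\topp2$ (which is positive, since the term $j_1=n_1$ contributes $q_0\topp1=\proba(0\in A_0\topp1)=1$) yields \eqref{eq:Sn/bn}--\eqref{eq:Uj_1,n_2}. The martingale-difference property of $\spp{U_{j_1,n_2}}_{j_1\in\Z}$ relative to $\{\calF_{j_1}\topp1\}_{j_1\in\Z}$ is then inherited from the $X_\vvj^*$: each $X_\vvj^*=\calP_{j_1}\topp1(X_\vvj)$ is bounded, $\calF_{j_1+1}\topp1$-measurable and $\calF_{j_1}\topp1$-centred, and a finite sum over $j_2$ followed by the normalisation preserves all three properties.

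It remains to establish \eqref{eq:asympb_n}. From the product structure $q_\vvi=q_{i_1}\topp1 q_{i_2}\topp2$, which is a consequence of the independence of the two ancestral families, one obtains the factorisation $\sum_{\vvi\in[\vv1,\vvn]}q_{\vvi-\vvj}=b_{n_1,j_1}\topp1 b_{n_2,j_2}\topp2$ for all $\vvn\in\N^2$, $\vvj\in\Zt$, hence $b_\vvn^2=\sum_{\vvj\in\Zt}\bigl(b_{n_1,j_1}\topp1 b_{n_2,j_2}\topp2\bigr)^2=(b_{n_1}\topp1)^2(b_{n_2}\topp2)^2$. Feeding in \eqref{eq:asymb_n} for the first direction and the identical statement $(b_n\topp2)^2\sim C_{\alpha_2}n^{2\alpha_2+1}L_2(n)^{-2}$ for the second (both of which reduce to \eqref{eq:VarSn} via the orthogonality identity $\var(S_n)=\var(X_0^*)(b_n\topp r)^2$ for the one-dimensional model and $\var(X_0^*)=(\sum_{k\ge0}(q_k\topp r)^2)^{-1}$), and multiplying the two equivalences, gives \eqref{eq:asympb_n}.

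I do not expect a genuine obstacle here: the proposition is a bookkeeping statement summarising Lemma~\ref{lem:projection} and \eqref{eq:VarSn}. The only point that is not purely formal is the reorganisation of the doubly-indexed sum in the first step, where the finite partial sum over the rectangle $[\vv1,\vvn]$ must be commuted past the projection series $\sum_{m\le i_1}$, which converges only in $L^2$; this is legitimate because taking a finite linear combination is continuous on $L^2$, and the subsequent reindexing of the deterministic coefficients is harmless since $q_k\topp1$ vanishes for $k<0$, so that $b_{n_1,m}\topp1=\sum_{i=1}^{n_1}q_{i-m}\topp1$ is a finite sum of nonzero terms for each $m$.
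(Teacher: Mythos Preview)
Your proof is correct and follows the same approach as the paper, which actually presents no separate proof for this proposition: the paper derives the representation of $S_\vvn$ from Lemma~\ref{lem:projection} and the asymptotic \eqref{eq:asymb_n} from \eqref{eq:VarSn} in the paragraphs immediately preceding the statement, then simply records the conclusion as Proposition~\ref{prop:representation}. Your write-up is if anything more careful than the paper's, in that you spell out the justification for commuting the finite rectangular sum with the $L^2$-convergent projection series.
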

Note that again, for each $n\in\N$, $\{U_{j,n}\}_{j\in\Z}$ is a stationary martingale-difference sequence with respect to the filtration $\{\calF_{j}\topp1\}_{j\in\Z}$. 

\subsection{A central limit theorem}\label{sec:HSHS_CLT}
Instead of proving directly the convergence of finite-dimensional distributions, we prove the following central limit theorem first, in order to better illustrate the key ideas of the proof.
\begin{Prop}\label{prop:HSHS_CLT}
For the two-dimensional Hammond--Sheffield model, suppose \eqref{eq:RVHS} holds with $\alpha_1,\alpha_2\in(0,1/2)$
and slowly varying functions $L_1$, $L_2$ respectively.
We have
\[
\frac{S_\vvn}{b_\vvn}\weakto\calN(0,\sigma^2), \mmas \vvn\to\infty, 
\]
where $\sigma^2=\spp{\sum_{\vvk\ge \vv0}q_\vvk^2}^{-1}$. 
\end{Prop}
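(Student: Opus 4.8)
The plan is to apply McLeish's martingale central limit theorem to the triangular array of martingale differences appearing in Proposition~\ref{prop:representation}, namely to write
\[
\frac{S_\vvn}{b_\vvn} = \sum_{j_1\in\Z}\frac{b_{n_1,j_1}\topp1}{b_{n_1}\topp1}U_{j_1,n_2},
\]
where for each fixed $n_2$, $\{U_{j_1,n_2}\}_{j_1\in\Z}$ is a stationary martingale-difference sequence with respect to $\{\calF_{j_1}\topp1\}_{j_1\in\Z}$. McLeish's theorem requires three ingredients: a Lindeberg-type / uniform asymptotic negligibility condition on the individual summands, a bound ruling out a large maximal term, and — the crucial one — convergence in probability of the conditional (or unconditional, in the stationary-weights setting) sum of squares to the limiting variance. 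Because the weights $b_{n_1,j_1}\topp1/b_{n_1}\topp1$ are deterministic and, by~\eqref{eq:asymb_n}, satisfy $\sum_{j_1}(b_{n_1,j_1}\topp1/b_{n_1}\topp1)^2\to 1$ with each individual weight going to $0$ uniformly, the negligibility and maximal-term conditions follow from the uniform boundedness of $U_{j_1,n_2}$ (which holds since each $X_\vvj^*$ is bounded, uniformly in $\vvj$ and $n_2$, being a martingale projection of a $\pm1$ variable). The real content is to prove
\equh\label{eq:keyergodicity}
\frac1{(b_{n_1}\topp1)^2}\sum_{j_1\in\Z}(b_{n_1,j_1}\topp1)^2 U_{j_1,n_2}^2 \Pto \sigma^2, \mmas \vvn\to\infty,
\eque
which is the two-dimensional analogue of~\eqref{eq:ergodicity1d}.

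To establish~\eqref{eq:keyergodicity} I would proceed in two stages. First, I would identify the target constant: $\esp(U_{0,n_2}^2)$ should converge to $\sigma^2 = (\sum_{\vvk}q_\vvk^2)^{-1}$ as $n_2\to\infty$. Since $U_{0,n_2}=(b_{n_2}\topp2)^{-1}\sum_{j_2=1}^{n_2}X_{(0,j_2)}^*$ and the $X_{(0,j_2)}^*$ form, for fixed first coordinate, a sequence built from the generalized one-dimensional Hammond--Sheffield structure in the second direction, I would invoke Proposition~\ref{prop:generalizedHS} (or rather its variance statement, the two-dimensional analogue of~\eqref{eq:VarSn}) together with Lemma~\ref{lem:projection}(ii) to get $\esp(U_{0,n_2}^2)\to C_{\alpha_2}\inv\cdot\var(X_\vv0^*)\cdot(\text{something})$; after bookkeeping with $\var(X_\vv0^*) = (\sum_{\vvk\ge\vv0}q_\vvk^2)^{-1}\cdot(\cdots)$ and the factorization $q_\vvk = q_{k_1}\topp1 q_{k_2}\topp2$, this should collapse to exactly $\sigma^2$. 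Second — and this is where the decoupling argument advertised in the introduction enters — I would control the fluctuations of $\sum_{j_1}(b_{n_1,j_1}\topp1)^2 U_{j_1,n_2}^2$ around its mean. The sequence $\{U_{j_1,n_2}^2\}_{j_1\in\Z}$ is stationary in $j_1$ but its dependence range (in $j_1$) grows with $n_2$, so one cannot simply quote an ergodic theorem with fixed mixing rates. The idea is to split the jump in the first direction at a truncation level, decoupling the far-past contributions so that $U_{j_1,n_2}$ becomes (approximately) a function of finitely many first-direction jumps near $j_1$, estimate the resulting $m$-dependent approximation error uniformly in $n_2$, and then apply an $L^2$ or $L^1$ law of large numbers for weighted sums of the $m$-dependent (in $j_1$) stationary array, using that the weights $b_{n_1,j_1}\topp1/b_{n_1}\topp1$ are ``spread out'' (no single weight dominates). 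A second-moment computation of $\esp\big[\big(\sum_{j_1}w_{n_1,j_1}^2(U_{j_1,n_2}^2-\esp U_{j_1,n_2}^2)\big)^2\big]$ with $w_{n_1,j_1}=b_{n_1,j_1}\topp1/b_{n_1}\topp1$ should then be shown to vanish, with the covariances $\cov(U_{0,n_2}^2,U_{j_1,n_2}^2)$ bounded in a way (summable after rescaling) that survives the $n_2\to\infty$ limit.

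The main obstacle, as the authors themselves flag, is precisely this decoupling/ergodicity step~\eqref{eq:keyergodicity}: one must show that a weighted average of squares of martingale differences whose own correlation length in $j_1$ diverges with $n_2$ still concentrates, and do so with estimates uniform in $n_2$. In particular, controlling $\cov(U_{0,n_2}^2, U_{j_1,n_2}^2)$ requires understanding how the ancestral-line intersections in the first direction interact with the second-direction partition inside each $U$; the product structure $A_\vvi = A_{i_1}\topp1\times A_{i_2}\topp2$ and the independence of the two directions are what make this tractable, but the fourth-moment bookkeeping is delicate. Once~\eqref{eq:keyergodicity} is in hand, the remaining verification of McLeish's conditions is routine given the uniform boundedness of the $X_\vvj^*$ and the asymptotics~\eqref{eq:asymb_n}, and the conclusion $S_\vvn/b_\vvn\weakto\calN(0,\sigma^2)$ follows.
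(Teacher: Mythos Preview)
Your overall plan --- McLeish applied to the representation in Proposition~\ref{prop:representation}, with the heart of the matter being the sum-of-squares condition~\eqref{eq:keyergodicity} --- is exactly the paper's strategy. Two points, however, need correction.

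First, a small one: $U_{j_1,n_2}$ is \emph{not} uniformly bounded. It is a sum of $n_2$ bounded terms divided by $b_{n_2}\topp2\asymp n_2^{\alpha_2+1/2}$, so $|U_{j_1,n_2}|$ can be of order $n_2^{1/2-\alpha_2}$. The paper instead introduces a second martingale decomposition $X_\vvj^{**}=\calP_{j_2}\topp2(X_\vvj^*)$ (see~\eqref{eq:defX**}--\eqref{eq:U_n}) and uses Burkholder's inequality to get uniform \emph{moment} bounds $\sup_n\esp U_{0,n}^{2p}<\infty$ (Lemma~\ref{lem:Umoment}); this same representation also gives $\esp U_{0,n}^2=\sigma^2$ exactly for every $n$, so no asymptotic identification of the mean is needed.

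Second, and more substantively, your decoupling sketch has a gap. Truncating the first-direction jump at level $K$ does \emph{not} make $\{U_{j,n,K}\}_{j\in\Z}$ an $m$-dependent sequence in $j$: even for $|j-j'|\gg K$ the variables remain dependent, both because the first-direction ancestral lines $A_{j-K+1}\topp1,\dots,A_j\topp1$ and $A_{j'-K+1}\topp1,\dots,A_{j'}\topp1$ can still intersect (they are infinite backward paths), and because $U_{j,n,K}$ and $U_{j',n,K}$ share the entire second-direction partition $\{A_i\topp2\}_{i\in\Z}$. The paper handles this in two moves (Lemma~\ref{lem:cov_decay}): it first shows that the first-direction intersection event $R_{j,K}\topp1$ has probability tending to $1$ as $j\to\infty$, and on that event replaces $U_{j,n,K}$ by a copy $\wt U_{j,n,K}$ built from an \emph{independent} first-direction forest (but the same second-direction one). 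The remaining dependence of $U_{0,n,K}$ and $\wt U_{0,n,K}$ through the second direction is then killed by a conditional CLT: given the first-direction partitions $G_K\topp1,\wt G_K\topp1$, each of $\{X_{(0,i),K}^*\}_i$ and $\{\wt X_{(0,i),K}^*\}_i$ is a generalized one-dimensional Hammond--Sheffield model, so Proposition~\ref{prop:generalizedHS} gives a joint Gaussian limit with \emph{independent} components, whence $\cov(U_{0,n,K}^2,\wt U_{0,n,K}^2)\to0$. This last step --- invoking a one-dimensional CLT inside the proof of the two-dimensional one --- is the idea your outline is missing; without it the covariance $\cov(U_{0,n,K}^2,U_{j,n,K}^2)$ does not become small merely by taking $j$ large, but only after also taking $n$ large.
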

The rest of this subsection is devoted to the proof of this proposition. 
With the representation in \eqref{eq:Sn/bn}, by McLeish's martingale central limit theorem \citep{mcleish74dependent}, it suffices to show
\equh\label{eq:McLeish1}
\sup_{\vvn\in\N^2}\esp\pp{\sup_{j\in\Z}\pp{\frac{b_{n_1,j}\topp1}{b_{n_1}\topp1}}^{2}U_{j,n_2}^2}<\infty,
\eque
\equh\label{eq:McLeish2}
\lim_{\vvn\to\infty}\sup_{j\in\Z}\pp{\frac{b_{n_1,j}\topp1}{b_{n_1}\topp1}}^{2}U_{j,n_2}^2 = 0\quad \mbox{ in probability,}
\eque
and
\equh\label{eq:McLeish3}
\lim_{\vvn\to\infty}\sum_{j\in\Z}\pp{\frac{{b_{n_1,j}\topp1}}{{b_{n_1}\topp1}}}^2U_{j,n_2}^2 =  \esp X_{\vv0}^{*2} \quad \mbox{ in probability.}
\eque

We start with the following  observation.
\begin{Lem}\label{lem:sup_bnj}
For $\alpha_1\in(0,1/2)$, we have
\[
\limn \sup_{j\in\Z}\frac{b_{n,j}\topp1}{b_n\topp1} = 0. 
\]
\end{Lem}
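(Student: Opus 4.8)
The plan is to show that $\sup_{j\in\Z} b_{n,j}\topp1 = o(b_n\topp1)$ by identifying the dominant term and comparing it against the known growth rate of $b_n\topp1$. Recall that $b_{n,j}\topp1 = \sum_{i=1}^n q_{i-j}\topp1$ and $q_k\topp1 = 0$ for $k<0$, so $b_{n,j}\topp1 = \sum_{k\ge 1-j, \, 1\le k+j\le n} q_k\topp1$. For each fixed $j$ this is a partial sum of the sequence $\{q_k\topp1\}_{k\ge 0}$, hence bounded by the full sum $\sum_{k\ge 0} q_k\topp1$ if that converges. However, $\alpha_1 \in (0,1/2)$ only guarantees $\sum_{k\ge 0} (q_k\topp1)^2 < \infty$; the sequence $q_k\topp1$ itself is regularly varying of index $\alpha_1 - 1 \in (-1,-1/2)$ (this is implicit in \citep{hammond13power} and reflected in \eqref{eq:VarSn}), so $\sum_{k\ge 0} q_k\topp1 = \infty$ and $\sum_{i=1}^n q_i\topp1$ grows like $n^{\alpha_1} L_1(n)^{-1}$ up to a constant, by Karamata's theorem.

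First I would establish that $\sup_{j\in\Z} b_{n,j}\topp1 \le \sum_{i=1}^n q_i\topp1$. Indeed, for any $j$, the set of indices $i\in\{1,\dots,n\}$ with $i - j \ge 1$ is a (possibly empty) subinterval of $\{1,\dots,n\}$ of length at most $n$, say $\{a, a+1, \dots, n\}$ with $a = \max(1, j+1)$; relabelling, $b_{n,j}\topp1 = \sum_{k=1}^{n-a+1} q_k\topp1 \le \sum_{k=1}^n q_k\topp1$ because all $q_k\topp1 \ge 0$. So $\sup_{j\in\Z} b_{n,j}\topp1 \le \sum_{k=1}^n q_k\topp1$.

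Next I would compare the two growth rates. By \eqref{eq:asymb_n}, $b_n\topp1 \sim C_{\alpha_1}^{1/2} n^{\alpha_1 + 1/2} L_1(n)^{-1}$. For the numerator, since $q_k\topp1$ is regularly varying of index $\alpha_1 - 1$, Karamata's theorem (\citep[Prop.~1.5.8]{bingham87regular}) gives $\sum_{k=1}^n q_k\topp1 = O(n^{\alpha_1} L_1(n)^{-1})$ — or if one prefers to avoid invoking the precise tail behaviour of $q_k\topp1$, one can instead bound $\sum_{k=1}^n q_k\topp1 \le (\sum_{k=1}^n (q_k\topp1)^2)^{1/2} n^{1/2}$ by Cauchy--Schwarz, and $\sum_{k=1}^n (q_k\topp1)^2 \to \sum_{k\ge0}(q_k\topp1)^2 < \infty$, so $\sum_{k=1}^n q_k\topp1 = O(n^{1/2})$. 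Either way,
\[
\frac{\sup_{j\in\Z} b_{n,j}\topp1}{b_n\topp1} \le \frac{C\, n^{1/2}}{C_{\alpha_1}^{1/2} n^{\alpha_1+1/2} L_1(n)^{-1}(1+o(1))} = \frac{C\, L_1(n)}{C_{\alpha_1}^{1/2}\, n^{\alpha_1}}(1+o(1)) \longrightarrow 0
\]
as $n\to\infty$, since $\alpha_1 > 0$ and $L_1$ is slowly varying (so $L_1(n)/n^{\alpha_1}\to 0$). This proves the claim.

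The main obstacle, if any, is purely bookkeeping: one must be careful that the index shift in $b_{n,j}\topp1$ is handled correctly for all $j\in\Z$ (including negative $j$ and $j$ large, where the sum is a genuine partial sum of $\{q_k\topp1\}$ or is empty), and one must decide whether to invoke the regular variation of $q_k\topp1$ directly or to route through the crude Cauchy--Schwarz bound $O(n^{1/2})$, which is already more than enough since $n^{1/2} = o(n^{\alpha_1+1/2}L_1(n)^{-1})$ for $\alpha_1 > 0$. I would use the Cauchy--Schwarz route as it only relies on $\sum_{k\ge0}(q_k\topp1)^2 < \infty$, which is already in hand, and on \eqref{eq:asymb_n}.
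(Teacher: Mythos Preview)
Your overall strategy---bound $\sup_j b_{n,j}\topp1$ crudely and compare to the growth $b_n\topp1 \sim C_{\alpha_1}^{1/2} n^{\alpha_1+1/2}L_1(n)^{-1}$---is sound and in fact more direct than the paper's argument, which routes through an auxiliary result (Lemma~8 of \citep{bierme17invariance}) to reduce the problem to showing $\sum_{j}(b_{n,j}\topp1 - b_{n,j+1}\topp1)^2 = o((b_n\topp1)^2)$, and then observes that this sum equals $\sum_j(q_{n-j}\topp1 - q_{-j}\topp1)^2 \le 2\sum_j (q_j\topp1)^2<\infty$. Your route avoids the external lemma entirely, at the modest cost of invoking the explicit growth rate \eqref{eq:asymb_n}.

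There is, however, a genuine slip in your ``relabelling'' step. For $j<0$ you have $a=1$, and then
\[
b_{n,j}\topp1 = \sum_{i=1}^n q_{i-j}\topp1 = \sum_{k=1-j}^{n-j} q_k\topp1,
\]
a window of $n$ consecutive terms starting at index $1-j>1$, \emph{not} $\sum_{k=1}^{n} q_k\topp1$ as you write. Since $\{q_k\topp1\}_{k\ge 0}$ is not assumed (nor known) to be monotone, such a shifted window need not be dominated by $\sum_{k=1}^n q_k\topp1$, and the claimed bound $\sup_j b_{n,j}\topp1 \le \sum_{k=1}^n q_k\topp1$ is not justified. (You also drop the term $q_0\topp1=1$ by writing $i-j\ge 1$ instead of $i-j\ge 0$, though this is harmless.)

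The fix is immediate and is essentially what you call the ``Cauchy--Schwarz route'', applied one step earlier: for every $j\in\Z$,
\[
b_{n,j}\topp1 = \sum_{i=1}^n q_{i-j}\topp1 \le n^{1/2}\pp{\sum_{i=1}^n (q_{i-j}\topp1)^2}^{1/2} \le n^{1/2}\pp{\sum_{k\ge 0}(q_k\topp1)^2}^{1/2},
\]
uniformly in $j$. This gives $\sup_{j\in\Z} b_{n,j}\topp1 = O(n^{1/2})$ directly, and the remainder of your argument goes through verbatim. So the approach is correct once you bypass the faulty intermediate bound and apply Cauchy--Schwarz to $b_{n,j}\topp1$ itself rather than to $\sum_{k=1}^n q_k\topp1$.
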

\begin{proof}
By Lemma~8 in \citep{bierme17invariance}, it suffices to prove that $\sum_{j\in\Z}\pp{ (b_{n,j}\topp1)^2-(b_{n,j+1}\topp1)^2} =o((b_n\topp1)^2)$, which follows from 
$\sum_{j\in\Z}\pp{ {b_{n,j}\topp1}-{b_{n,j+1}\topp1} }^2=o((b_n\topp1)^2)$ by the Cauchy--Schwarz inequality. To see the latter, observe that 
$$\sum_{j\in\Z}\pp{ {b_{n,j}\topp1}-{b_{n,j+1}\topp1} }^2=\sum_{j\in\Z} \pp{q_{n-j}\topp1-q_{-j}\topp1}^2 \le 2 \sum_{j\in\Z} {(q_j\topp1)^2} <\infty.$$
\end{proof}

We also need uniform bounds on the moments of $U_{j,n}$. To facilitate we  introduce a representation of $U_{j,n}$ as a weighted sum of martingale differences in the second direction.
Let $\calF\topp2_m=\sigma\{X_\vvi\mid i_1\in\Z, i_2<m\}$ and $\calP_m\topp2(\cdot)=\esp(\cdot\mid \calF_{m+1}\topp2)-\esp(\cdot \mid \calF_m\topp2)$, $m\in\Z$.
We set 
\equh\label{eq:defX**}
X_\vvn^{**}=\calP_{n_2}\topp2(X_\vvn^*)=X_\vvn^*-\esp(X_\vvn^*\mid \calF_{n_2}\topp2).
\eque
For all $n_1\in\Z$, $(X_\vvn^{**})_{n_2\in\Z}$ is a martingale-difference sequence with respect to the filtration $(\calF_{n_2}\topp2)_{n_2\in\Z}$.
Proceeding as in Lemma~\ref{lem:projection}, we obtain that for all $\vvn\in\N^2$,
\[
 X_n^*=\sum_{m\le n_2} \calP_{m}\topp2(X_n^*)=\sum_{m\le n_2} q_{n_2-m}\topp2 X_{(n_1,m)}^{**},
\]
where the sum converges in $L^2$. 
We thus have, for all $n\in\N$, $j_1\in\Z$,
\equh\label{eq:U_n}
 U_{j_1,n}=\frac{1}{b_{n}\topp2}\sum_{j_2\in\Z} b_{n,j_2}\topp2 X_\vvj^{**}.
\eque
Further, 
\[
 \var(X_\vv0^{**})= \frac{\var(X_\vv0^*)}{\sum_{k\ge 0} {(q_k\topp2)^2}}=\frac1{\sum_{\vvk\ge \vv0} q_\vvk^2}.
\]

\begin{Lem}\label{lem:Umoment}
\noindent (i) For all $n\in\N$, $\esp U_{0,n}^2= \pp{\sum_{\vvk\in\Z^2}q_{\vvk}^2}^{-1}<\infty$.

\noindent (ii) For all $p\ge 1$, $\sup_{n\in\N} \esp U_{0,n}^{2p} <\infty$.
\end{Lem}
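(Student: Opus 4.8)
\textbf{Proof plan for Lemma~\ref{lem:Umoment}.}
The plan is to use the representation \eqref{eq:U_n} of $U_{0,n}$ as a normalized weighted sum of the stationary martingale-difference sequence $\{X_\vvj^{**}\}_{j_2\in\Z}$ in the second direction, together with the Burkholder--Davis--Gundy inequality, reducing everything to bounds on $b_n\topp2$ and on moments of quantities built from the one-dimensional ancestral lines. For part (i), since $\{X_{(0,j_2)}^{**}\}_{j_2\in\Z}$ is a stationary martingale-difference sequence, the cross terms vanish and $\esp U_{0,n}^2 = (b_n\topp2)^{-2}\sum_{j_2=1}^n (q_{n-j_2}\topp2)^2\,\var(X_\vv0^{**}) = \var(X_\vv0^{**}) = (\sum_{\vvk\in\Z^2}q_\vvk^2)^{-1}$, where the last two equalities are exactly the computation of $\var(X_\vv0^{**})$ recorded just before the lemma statement and the definition $(b_n\topp2)^2 = \sum_j (b_{n,j}\topp2)^2$; one should note $b_{n,j}\topp2 = \sum_{i=1}^n q_{i-j}\topp2$ and the sum $\sum_{j\in\Z}(b_{n,j}\topp2)^2$ telescopes appropriately so that $\sum_{j_2}(b_{n,j_2}\topp2)^2$ equals $\sum_{j_2=1}^n (q_{n-j_2}\topp2)^2$ only after accounting for overlaps --- more carefully, one has $\esp(\sum_{j_2=1}^n X_{(0,j_2)}^*)^2 = (b_n\topp2)^2 \var(X_\vv0^{**})$ directly by applying $\var(X_\vv0^*) = \var(X_\vv0^{**})\sum_{k\ge0}(q_k\topp2)^2$ and Lemma~\ref{lem:projection}(ii)-type reasoning in the second coordinate, so $\esp U_{0,n}^2 = \var(X_\vv0^{**})$, and finiteness is the already-noted $\sum_{\vvk\ge\vv0}q_\vvk^2<\infty$ for $(\alpha_1,\alpha_2)\in(0,1/2)^2$.

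For part (ii), I would first apply the Burkholder--Davis--Gundy (or Burkholder) inequality to the martingale $\sum_{j_2=1}^n X_{(0,j_2)}^*$, or rather to its expansion $\sum_{j_2\in\Z} b_{n,j_2}\topp2 X_{(0,j_2)}^{**}$ via \eqref{eq:U_n}, to get
\[
\esp U_{0,n}^{2p} \le C_p\, (b_n\topp2)^{-2p}\, \esp\pp{\sum_{j_2\in\Z}\pp{b_{n,j_2}\topp2}^2 \pp{X_{(0,j_2)}^{**}}^2}^p.
\]
Since $|X_\vvi|\le 1$, the variables $X_\vvj^*$ and $X_\vvj^{**}$ are bounded by constants ($|X_\vvj^*|\le 2$, $|X_\vvj^{**}|\le 4$, say), so $(X_{(0,j_2)}^{**})^2 \le 16$, and hence the right side is bounded by $C_p\,16^p\,(b_n\topp2)^{-2p}\pp{\sum_{j_2\in\Z}(b_{n,j_2}\topp2)^2}^p = C_p\,16^p$, which is uniform in $n$. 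This uses nothing beyond boundedness of the spins and the definition of $b_n\topp2$; the crude bound $(X_{(0,j_2)}^{**})^2 \le 16$ makes the quadratic-variation term collapse to exactly $(b_n\topp2)^2$, giving the claim with no regular-variation input needed.

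The main thing to get right is the bookkeeping that makes the quadratic-variation sum in the BDG bound equal to $(b_n\topp2)^2$ up to the constant factor: one must be sure the representation \eqref{eq:U_n} is the honest orthogonal expansion of $\sum_{j_2=1}^n X_{(0,j_2)}^*$ (only finitely many, or summably many, $b_{n,j_2}\topp2$ are nonzero, and the series converges in $L^2$), which follows from $\sum_j (b_{n,j}\topp2)^2 = \sum_j (q_{n-j}\topp2 + \cdots)^2 \le n \sum_{k\ge0}(q_k\topp2)^2<\infty$ and the corresponding $L^2$-convergence argument carried out for the second coordinate exactly as in Lemma~\ref{lem:projection}(ii). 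There is no genuine obstacle here: part (i) is a direct variance computation already essentially done in the text preceding the lemma, and part (ii) is Burkholder plus the trivial boundedness of $\pm1$-derived martingale differences; the only care needed is consistency of the normalizing constants $b_n\topp2$ across the two displays.
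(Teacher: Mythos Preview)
Your proposal is correct and follows essentially the same approach as the paper: for (i) you use the orthogonality of the martingale-difference sequence $\{X_{(0,j_2)}^{**}\}_{j_2\in\Z}$ via \eqref{eq:U_n} to get $\esp U_{0,n}^2=\var(X_\vv0^{**})$ exactly as the paper does (your initial detour about ``telescoping'' is unnecessary---just use \eqref{eq:U_n} directly, as you do in your ``more carefully'' sentence); for (ii) the paper likewise applies Burkholder's inequality to \eqref{eq:U_n} and then uses boundedness of $X_\vv0^{**}$, the only cosmetic difference being that the paper bounds via Minkowski in $L^p$ to get $\|U_{0,n}\|_{2p}\le C_p\|X_\vv0^{**}\|_{2p}$ rather than your pointwise bound $(X_\vvj^{**})^2\le 16$.
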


\begin{proof}
Part (i) is a direct consequence of \eqref{eq:defX**}:
we have $\esp (X_\vvn^{**}X_\vvm^{**})=0$ for $\vvn\ne\vvm$ and thus
\[
 \var(U_{0,n})= \var(X_\vv0^{**})=\frac1{\sum_{\vvk\ge 0} q_\vvk^2}.
\]
For (ii), using that $(X_{(0,n)}^{**})_{n\in\Z}$ is a  martingale-difference sequence, by Burkholder's inequality,  writing $\left\|\,\cdot\, \right\|_{p} = (\esp|\cdot|^{p})^{1/p}$, for some constant $C_p$ depending only on $p$, 
\begin{align*}
\left\| U_{0,n}\right\|_{2p} & \le C_p \left\|\sum_{j\in\Z}\frac{(b_{n,j}\topp2)^2}{(b_n\topp2)^2}X_{(0,j)}^{**2}\right\|_p^{1/2} 
\\
&  \le C_p \pp{\sum_{j\in\Z}\frac{(b_{n,j}\topp2)^2}{(b_n\topp2)^2}\left\|X_{\vv0}^{**2}\right\|_p}^{1/2} = C_p\left\|X_{\vv0}^{**}\right\|_{2p}.
\end{align*}
Then (ii) follows since $X_\vv0^{**}$ is bounded.
\end{proof}

Now, we establish the conditions of McLeish's theorem.
\medskip

For~\eqref{eq:McLeish1}, by  the inequality $\sup_j|a_j| \le \sum_j|a_j|$, the left-hand side is bounded by $\esp U_{0,n}^2 = \spp{\sum_{\vvk\ge \vv0}q_\vvk^2}^{-1}<\infty$ by Lemma~\ref{lem:Umoment} (i).
\medskip

For~\eqref{eq:McLeish2}, for all $\varepsilon>0$, one has
\begin{align*}
\proba\pp{\max_{j\in\Z}\pp{\frac{b_{n_1,j}\topp1}{b_{n_1}\topp1}}^{2}U_{j,n_2}^2>\varepsilon} & \le \sum_{j\in\Z}\proba\pp{\pp{\frac{b_{n_1,j}\topp1}{b_{n_1}\topp1}}^{2}U_{j,n_2}^2>\varepsilon}\\
& \le \sum _{j\in\Z}\pp{\frac{b_{n_1,j}\topp1}{b_{n_1}\topp1}}^{4}\frac{\esp |U_{0,n_2}|^{4}}{\varepsilon^{2}}.
\end{align*}
Lemma~\ref{lem:sup_bnj} and Lemma~\ref{lem:Umoment} (ii) then yield \eqref{eq:McLeish2}.
\medskip

Condition~\eqref{eq:McLeish3} is much harder to establish. We shall prove the corresponding  $L^2$-convergence, which will follow from 
\equh\label{eq:ergodicity}
\lim_{\vvn\to\infty}\frac1{(b_{n_1}\topp1)^4}\sum_{j_1,j_1'\in\Z}(b_{n_1,j_1}\topp1)^2(b_{n_1,j_1'}\topp1)^2\cov\pp{U_{j_1,n_2}^2,U_{j_1',n_2}^2}=0.
\eque
For this purpose, we first provide an approximation of $X_\vvj^*$ as follows.
Introduce, for each integer $K\ge 1$, for all $\vvj\in\Z^2$,
\eqnh
X_{\vvj,K}^* = X_\vvj - \sum_{k_1=1}^Kp_{k}\topp1X_{(j_1-k,j_2)}.
\eqne
Recalling \eqref{eq:HSHS_Xj}, observe that
\[
|X_{\vv0}^*-X_{\vv0,K}^*|\le \sum_{k=K+1}^\infty p_k\topp1\to 0, \mmas K\to\infty.
\]
Then, define 
\equh\label{eq:UjnK}
U_{j_1,n,K} = \frac1{b_n\topp 2}\sum_{j_2=1}^{n} X_{\vvj,K}^*,\;\text{ for } n,K\in\N,j_1\in\Z.
\eque
Note that $\{U_{j,n,K}\}_{j\in\Z}$ for every $K,n\in\N$ is again a stationary martingale-difference sequence with respect to the filtration $\{\calF_j\topp1\}_{j\in\Z}$. We shall need the following uniform bounds. 
\begin{Lem}\label{lem:Uapprox}
\noindent (i) For all $p\ge 1$ and $K\ge 1$, 
$$ \sup_{n\in\N} \esp |U_{0,n,K}|^{2p} <\infty.$$

\noindent (ii) For all $p\ge 1$,
$$ \lim_{K\to\infty}\sup_{n\in\N}\esp|U_{0,n}-U_{0,n,K}|^{2p} =0. $$
\end{Lem}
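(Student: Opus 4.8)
\textbf{Proof proposal for Lemma~\ref{lem:Uapprox}.}

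The plan is to treat both parts by the same mechanism that gave Lemma~\ref{lem:Umoment}(ii): express the quantity of interest as a normalized weighted sum of a martingale-difference sequence in the \emph{second} direction, apply Burkholder's inequality to reduce a $2p$-th moment to a quadratic sum of per-coordinate $2p$-th moments, and then use $\sum_j (b_{n,j}\topp2)^2 = (b_n\topp2)^2$ to collapse the weights. First I would record the second-direction representation of $U_{j_1,n,K}$. Proceeding exactly as in the derivation of \eqref{eq:U_n}, applying the projections $\calP_m\topp2$ to $X_{\vvj,K}^*$, one gets for all $n,K\in\N$ and $j_1\in\Z$
\[
U_{j_1,n,K} = \frac1{b_n\topp2}\sum_{j_2\in\Z} b_{n,j_2}\topp2\, X_{\vvj,K}^{**},
\qquad X_{\vvj,K}^{**} \defe \calP_{j_2}\topp2(X_{\vvj,K}^*),
\]
and $(X_{(0,m),K}^{**})_{m\in\Z}$ is a martingale-difference sequence w.r.t.\ $(\calF_m\topp2)_{m\in\Z}$. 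Since $X_{\vvj,K}^*$ is bounded (by $1+\sum_{k=1}^K p_k\topp1 \le 2$ uniformly in $K$), so is $X_{\vvj,K}^{**}$, uniformly in $K$ and $n$. For part (i), Burkholder's inequality gives, writing $\snn\cdot_p = (\esp|\cdot|^p)^{1/p}$,
\[
\nn{U_{0,n,K}}_{2p} \le C_p\nn{\sum_{j\in\Z}\frac{(b_{n,j}\topp2)^2}{(b_n\topp2)^2}X_{(0,j),K}^{**2}}_p^{1/2} \le C_p\pp{\sum_{j\in\Z}\frac{(b_{n,j}\topp2)^2}{(b_n\topp2)^2}\nn{X_{\vv0,K}^{**}}_{2p}^2}^{1/2} = C_p\nn{X_{\vv0,K}^{**}}_{2p},
\]
which is bounded uniformly in $n$ and $K$ by the uniform boundedness of $X_{\vv0,K}^{**}$.

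For part (ii), I would run the same argument on the difference. The key point is that $U_{0,n} - U_{0,n,K}$ is again a normalized weighted sum of a martingale-difference sequence in the second direction, namely with increments $\calP_{j_2}\topp2(X_\vvj^* - X_{\vvj,K}^*)$; indeed both $X_\vvj^*$ and $X_{\vvj,K}^*$ admit the $L^2$-convergent expansion along $\calP\topp2_m$, so their difference does too, and the projection commutes with the subtraction. Applying Burkholder and collapsing the $(b_{n,j}\topp2)^2$ weights exactly as above yields
\[
\sup_{n\in\N}\nn{U_{0,n}-U_{0,n,K}}_{2p} \le C_p \nn{\calP_0\topp2\pp{X_\vv0^* - X_{\vv0,K}^*}}_{2p} \le C_p\nn{X_\vv0^* - X_{\vv0,K}^*}_{2p},
\]
using that $\calP_0\topp2$ is an $L^{2p}$-contraction (it is a difference of conditional expectations, each an $L^{2p}$-contraction, so bounded with norm at most $2$; this only costs an extra constant). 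Finally, from \eqref{eq:HSHS_Xj} and the definition of $X_{\vv0,K}^*$ one has the \emph{deterministic} bound $|X_\vv0^* - X_{\vv0,K}^*| \le \sum_{k=K+1}^\infty p_k\topp1 \to 0$ as $K\to\infty$, since $|X_\vvi|\le 1$; hence $\nn{X_\vv0^*-X_{\vv0,K}^*}_{2p}\to 0$ and the supremum over $n$ vanishes as $K\to\infty$.

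The only mild subtlety, and the step I would be most careful about, is justifying that $U_{0,n}-U_{0,n,K}$ genuinely has the claimed martingale-difference representation in the second direction with a uniformly bounded per-coordinate increment — i.e.\ that one may pass the $\calP_m\topp2$ expansion through the ($L^2$-convergent, but infinite) sum defining $X_\vv0^*$ and that $\calP_0\topp2(X_\vv0^*-X_{\vv0,K}^*)$ is controlled in $L^{2p}$ by the deterministic tail bound above. This is routine: the deterministic bound $|X_\vv0^* - X_{\vv0,K}^*|\le\sum_{k>K}p_k\topp1$ already lives in every $L^{2p}$, and $\calP_0\topp2$ being an $L^{2p}$-bounded operator finishes it without any need to manipulate the infinite expansion term by term. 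Everything else is a verbatim repetition of the proof of Lemma~\ref{lem:Umoment}.
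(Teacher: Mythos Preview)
Your proposal is correct and follows essentially the same approach as the paper's own proof: the paper's argument is terse, simply stating that the method of Lemma~\ref{lem:Umoment} yields $\esp|U_{0,n,K}|^{2p}\le C_p\,\esp|X_{\vv0,K}^{*}|^{2p}$ and $\esp|U_{0,n}-U_{0,n,K}|^{2p}\le C_p\,\esp|X_{\vv0}^*-X_{\vv0,K}^*|^{2p}$, whereas you spell out the second-direction martingale representation and the Burkholder step explicitly. One small wording point: $\calP_0\topp2$ is not literally a contraction on $L^{2p}$ but has norm at most $2$, as you note parenthetically; this only affects the constant and not the argument.
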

\begin{proof}
This lemma can be established in the same way as for Lemma~\ref{lem:Umoment} before by proving that for all $p\ge 1$, there exists a finite constant $C_p$ depending on $p$ only, such that for all $n,K\in\N$, 
\begin{align*}
 \esp |U_{0,n,K}|^{2p} & \le C_p\esp |X_{\vv0,K}^{*}|^{2p}\\
  \esp|U_{0,n}-U_{0,n,K}|^{2p} & \le C_p\esp|X_{\vv0}^*-X_{\vv0,K}^*|^{2p}. 
\end{align*} 
\end{proof}

Now, to prove \eqref{eq:ergodicity}, we first show that for all $\varepsilon>0$, one can choose $K\in\N$ large enough such that
\equh\label{eq:U_K}
\abs{\cov\pp{U_{0,n}^2,U_{j,n}^2} - \cov\pp{U_{0,n,K}^2,U_{j,n,K}^2}}<\varepsilon, \mfa n,j\in\N.
\eque
To see this, we first bound
\begin{multline*}
\abs{\esp(U_{0,n}^2U_{j,n}^2) - \esp(U_{0,n,K}^2U_{j,n,K}^2)} \\ 
\le \esp\abs{U_{0,n}^2(U_{j,n}^2-U_{j,n,K}^2)} + \esp\abs{(U_{0,n}^2-U_{0,n,K}^2)U_{j,n,K}^2}.
\end{multline*}
The first term on the right-hand side is bounded, applying the Cauchy--Schwarz inequality twice, by
\[
\pp{\esp |U_{0,n}|^4}^{1/2}\pp{\esp |U_{j,n}+U_{j,n,K}|^4}^{1/4} \pp{\esp |U_{j,n}-U_{j,n,K}|^4}^{1/4}.
\]
By Lemma~\ref{lem:Uapprox},  this expression  converges to $0$ uniformly in $n$, as $K\to\infty$. The second term can be treated similarly.
Therefore~\eqref{eq:U_K} follows for $K$ large enough and hence to show~\eqref{eq:ergodicity} it suffices to establish, for $K$ large enough,
\equh\label{eq:ergodicity_K}
\lim_{\vvn\to\infty}\frac1{(b_{n_1}\topp1)^4}\sum_{j_1,j_1'\in\Z}(b_{n_1,j_1}\topp1)^2 (b_{n_1,j_1'}\topp1)^2\cov\pp{U_{j_1,n_2,K}^2,U_{j_1',n_2,K}^2}=0.
\eque
For this purpose, we shall establish the following lemma.
\begin{Lem}\label{lem:cov_decay}
For all $K\in\N,\varepsilon>0$, there exist integers $L_{K,\varepsilon}, N_{K,\varepsilon}$, such that
\[
\abs{
\cov(U_{0,n,K}^2,U_{j,n,K}^2)}<\varepsilon, \mfa j>L_{K,\varepsilon}, n>N_{K,\varepsilon}.
\]
\end{Lem}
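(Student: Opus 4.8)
The strategy is to peel off, by a decoupling in the first direction, the dependence on $j$, thereby reducing the claim to an ergodicity statement in the second direction alone. Since $K$ is fixed, $U_{j_1,n,K}=\frac1{b_n\topp2}\sum_{j_2=1}^n X_{(j_1,j_2),K}^*$ involves, for each $j_2$, only the spins $X_{(i_1,j_2)}$ with $i_1\in\{j_1-K,\dots,j_1\}$. Writing $\calG=\sigma(\{J_i\topp1\}_{i\in\Z},\{J_i\topp2\}_{i\in\Z})$ for the $\sigma$-field generated by all the jumps, it follows that, conditionally on $\calG$, the variable $U_{j_1,n,K}$ is a function of the i.i.d.\ spins attached to the components $A_{i_1}\topp1\times A_{i_2}\topp2$ with $i_1\in\{j_1-K,\dots,j_1\}$ and $i_2\in\{1,\dots,n\}$. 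Hence, on the event
\[
G_j=\bigcap_{i\in\{-K,\dots,0\}}\ \bigcap_{i'\in\{j-K,\dots,j\}}\ \big\{A_i\topp1\cap A_{i'}\topp1=\emptyset\big\}
\]
that no column near $0$ shares a first-direction tree with a column near $j$, the two families of components used by $U_{0,n,K}$ and by $U_{j,n,K}$ are disjoint, so $U_{0,n,K}$ and $U_{j,n,K}$ are $\calG$-conditionally independent.

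From here the argument runs as follows. Writing $\cov(U_{0,n,K}^2,U_{j,n,K}^2)=\esp\bb{\esp(U_{0,n,K}^2U_{j,n,K}^2\mid\calG)}-\esp U_{0,n,K}^2\,\esp U_{j,n,K}^2$ and using the conditional independence on $G_j$, one obtains
\[
\cov(U_{0,n,K}^2,U_{j,n,K}^2)=\cov\big(\esp(U_{0,n,K}^2\mid\calG),\esp(U_{j,n,K}^2\mid\calG)\big)+R_{j,n},
\]
where $R_{j,n}$ is a sum of two terms of the form $\esp[\;\cdot\;\ind_{G_j^c}]$ with the dot bounded in every $L^p$ uniformly in $n$ by Lemma~\ref{lem:Uapprox}(i); by Hölder's inequality $|R_{j,n}|\le C_{K}\,\proba(G_j^c)^{1/3}$, while
\[
\proba(G_j^c)\le (K+1)^2\sup_{\ell\ge j-K}\proba\big(A_0\topp1\cap A_\ell\topp1\neq\emptyset\big)\longrightarrow 0\qmmas j\to\infty ,
\]
the decay of $\proba(A_0\topp1\cap A_\ell\topp1\neq\emptyset)$ being the covariance decay of the one-dimensional Hammond--Sheffield model, extracted from the proof of \citep[Lemma~3.1]{hammond13power} (it also underlies \eqref{eq:VarSn}). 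Thus $\sup_n|R_{j,n}|\to 0$ as $j\to\infty$, which accounts for the dependence on $j$.

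It remains to bound the covariance of the conditional second moments. By the Cauchy--Schwarz inequality and stationarity in the first direction (a shift leaves $\calG$ invariant as a $\sigma$-field),
\[
\abs{\cov\big(\esp(U_{0,n,K}^2\mid\calG),\esp(U_{j,n,K}^2\mid\calG)\big)}\le \var\big(\esp(U_{0,n,K}^2\mid\calG)\big),
\]
which no longer depends on $j$. Now $\{X_{(0,j_2),K}^*\}_{j_2\in\Z}$ is, by construction, a generalized one-dimensional Hammond--Sheffield model in the second direction (identical assignment rule along the second-direction forest, bounded centered marginal $X_{(0,0),K}^*$), and $U_{0,n,K}$ is its normalized partial sum; the one-dimensional ergodicity underlying Proposition~\ref{prop:generalizedHS} — the second-direction analogue of \eqref{eq:ergodicity1d} — shows that $\esp(U_{0,n,K}^2\mid\calG)$ concentrates around its mean as $n\to\infty$, and since it is bounded in every $L^p$ (Lemma~\ref{lem:Uapprox}(i)) this gives $\var(\esp(U_{0,n,K}^2\mid\calG))\to 0$ as $n\to\infty$. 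Choosing first $j$ large (which controls $\sup_n|R_{j,n}|$) and then $n$ large yields the asserted integers $L_{K,\varepsilon}$ and $N_{K,\varepsilon}$.

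The genuinely hard point is the last one, namely controlling $\var(\esp(U_{0,n,K}^2\mid\calG))$: this is a second-direction ergodicity statement of exactly the kind that already required real work in one dimension (\citep[Lemma~3.2]{hammond13power}, \citep[Lemma~7]{bierme17invariance}), the difficulty being that the second-direction martingale differences $X_{(0,j_2),K}^{**}=\calP_{j_2}\topp2(X_{(0,j_2),K}^*)$ are not conditionally orthogonal given $\calG$, so the conditional cross-covariances $\esp(X_{(0,j_2),K}^{**}X_{(0,j_2'),K}^{**}\mid\calG)$ must be handled by a dedicated argument in the spirit of the one-dimensional analysis. Quantifying the covariance decay $\proba(A_0\topp1\cap A_\ell\topp1\neq\emptyset)\to0$ and making the "disjoint first-direction trees $\Rightarrow$ conditional independence" reduction fully rigorous are the other places where care is needed, and this is precisely where the decoupling idea announced in the introduction is put to work.
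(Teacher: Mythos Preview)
Your decoupling on the event $G_j$ and the decomposition
\[
\cov(U_{0,n,K}^2,U_{j,n,K}^2)=\cov\big(\esp(U_{0,n,K}^2\mid\calG),\esp(U_{j,n,K}^2\mid\calG)\big)+R_{j,n}
\]
are correct, as is the control of $R_{j,n}$. The gap is the next step: after bounding the remaining covariance by $\var\big(\esp(U_{0,n,K}^2\mid\calG)\big)$ via Cauchy--Schwarz, you claim this variance tends to $0$ as $n\to\infty$. It does not. Writing $c_0=1$, $c_k=-p_k\topp1$ for $1\le k\le K$, and using $\esp(X_{(-k,j_2)}X_{(-k',j_2')}\mid\calG)=\inddd{A_{-k}\topp1\cap A_{-k'}\topp1\ne\emptyset}\,\inddd{A_{j_2}\topp2\cap A_{j_2'}\topp2\ne\emptyset}$, one gets the exact factorization
\[
\esp(U_{0,n,K}^2\mid\calG)=V_K\,W_n,
\]
where $V_K=\sum_{k,k'=0}^K c_k c_{k'}\inddd{A_{-k}\topp1\cap A_{-k'}\topp1\ne\emptyset}$ depends only on the first-direction forest (and not on $n$) and $W_n=(b_n\topp2)^{-2}\sum_{j_2,j_2'=1}^n\inddd{A_{j_2}\topp2\cap A_{j_2'}\topp2\ne\emptyset}$ only on the second. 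The factor $V_K$ is genuinely random (it takes different values according to the partition of $\{-K,\dots,0\}$ induced by $G\topp1$), and since $\esp W_n=(\sum_{k\ge0}(q_k\topp2)^2)^{-1}>0$, independence of the two directions gives $\var(V_KW_n)\ge(\esp W_n)^2\var(V_K)>0$ for every $n$. The one-dimensional ergodicity you invoke concerns a random quadratic form in the $X^{**}$'s, not the conditional variance given $\calG$; and even if $W_n$ converged, $V_KW_n$ would remain random through $V_K$.

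The paper's proof avoids exactly this obstruction. Rather than conditioning on all of $\calG$, it replaces $U_{j,n,K}$ on the event $R_{j,K}\topp1$ (your $G_j$) by $\wt U_{j,n,K}$, built from an \emph{independent} copy of the first-direction forest but the \emph{same} second-direction forest, reducing the claim to $\cov(U_{0,n,K}^2,\wt U_{0,n,K}^2)\to0$. In your language this is $\var\big(\esp(U_{0,n,K}^2\mid\sigma(\{J_i\topp2\}_i))\big)\to0$: conditioning on the second direction only integrates out $V_K$, leaving $(\esp V_K)^2\var(W_n)$, which does vanish. The paper establishes this not by estimating $\var(W_n)$ directly but via a joint conditional CLT for $(U_{0,n,K},\wt U_{0,n,K})$ given $G_K\topp1,\wt G_K\topp1$ (Proposition~\ref{prop:generalizedHS} applied to the generalized one-dimensional model), obtaining independent Gaussian limits and hence zero limiting covariance of the squares. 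Your Cauchy--Schwarz step discards precisely the information --- that the first-direction pieces of $\esp(U_{0,n,K}^2\mid\calG)$ and $\esp(U_{j,n,K}^2\mid\calG)$ decorrelate as $j\to\infty$ --- needed to close the argument.
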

 Given this result, observe that the left-hand side of~\eqref{eq:ergodicity_K} without taking the limit is bounded by, writing $\sum_{j_1'\in\Z} = \sum_{|j_1-j_1'|\le L_{K,\varepsilon}} + \sum_{|j_1-j_1'|>L_{K,\varepsilon}}$ for each $j_1\in\Z$, 
\begin{multline}
CL_{K,\varepsilon}\sum_{j_1\in\Z}\pp{\frac{b_{n_1,j_1}\topp1}{b_{n_1}\topp1}}^2\sup_{j_1'\in\Z}\pp{\frac{b_{n_1,j_1'}\topp1}{b_{n_1}\topp1}}^2 + \frac1{(b_{n_1}\topp1)^4}\sum_{j_1,j_1'\in\Z}(b_{n_1,j_1}\topp1)^2 (b_{n_1,j_1'}\topp1)^2 \varepsilon \\
\le CL_{K,\varepsilon}\sup_{j_1'\in\Z}\pp{\frac{b_{n_1,j_1'}\topp1}{b_{n_1}\topp1}}^2 + \varepsilon,\label{eq:CLK}
\end{multline}
for all $n_2>N_{K,\varepsilon}$.
This and Lemma \ref{lem:sup_bnj} give~\eqref{eq:ergodicity_K} and hence the third condition of McLeish's central limit theorem \eqref{eq:McLeish3}. Therefore, the proof of Proposition~\ref{prop:HSHS_CLT} is completed. 
It remains to prove Lemma \ref{lem:cov_decay}. 

\begin{proof}[Proof of Lemma~\ref{lem:cov_decay}]
Introduce, for each $K\in\N, j\in\Z$, the event
\equh\label{eq:RjK}
R_{j,K}\topp1= \ccbb{\pp{\bigcup_{i\in\{-K+1,\dots,0\}}A_i\topp1} \cap \pp{\bigcup_{i'\in\{j-K+1,\dots,j\}}A_{i'}\topp1} = \emptyset}.
\eque
We have $\lim_{j\to\infty}\proba(R_{j,K}\topp1) = 1$ for all $K$. 
This comes from 
\begin{align*}
\proba(A_0\topp1\cap A_j\topp1\ne\emptyset) 
&\le \sum_{k\ge 0} \proba(-k\in A_0\topp1, -k\in A_j\topp1)\\
& \le \sum_{k\ge 0} q_{k}\topp1 q_{j+k}\topp1 \le \pp{\sum_{k\ge 0} (q_{k}\topp1)^2}^{1/2}\pp{\sum_{k\ge 0} (q_{j+k}\topp1)^2}^{1/2},
\end{align*}
and the fact that $\sum_{k\ge 0} (q_{j+k}\topp1)^2\to 0$ as $j\to\infty$.
We now write
\equh\label{eq:UU1}
\esp(U_{0,n,K}^2U_{j,n,K}^2) = \esp\pp{U_{0,n,K}^2U_{j,n,K}^2\ind_{R_{j,K}\topp1}} + \esp\pp{U_{0,n,K}^2U_{j,n,K}^2\ind_{(R_{j,K}\topp1)^c}}.
\eque
The second term on the right-hand side, by applying the Cauchy--Schwarz inequality twice and Lemma \ref{lem:Uapprox}, can be bounded uniformly in $n$ by $C\proba((R_{j,K}\topp1)^c)^{1/2}$, which goes to zero as $j\to\infty$ ($C$ is a positive constant). So, it suffices to show that the first term on the right-hand side above can be controlled to be arbitrarily close to $(\esp U_{0,n,K}^2)^2$ for $j,K$ large enough. 

For this purpose, the key idea is to decouple the underlying partitions in the first direction $U_{0,n,K}$ and $U_{j,n,K}$. Otherwise, notice that the two are dependent for all choices of $j$ and $K$. For the decoupling, first 
notice that the law of the partition in the first direction are determined by the law of those ancestral lines involved in the definition of $R_{j,K}\topp1$. 
To proceed we introduce a copy of $\{A_j\topp1\}_{j\in\Z}$, denoted by $\{\wt A_j\topp1\}_{j\in\Z}$, independent of the original two-dimensional Hammond--Sheffield model. Introduce the product partition $\wt G$ of $\Z^2$ as in the original model, but instead induced by $\{\wt A_j\topp1\}_{j\in\Z}$ and $\{A_j\topp2\}_{j\in\Z}$. Then, define $\{\wt X_\vvj\}_{\vvj\in\Z^2}$ as before  on  $\wt G$ by identical assignment rule. Define similarly $\wt X_\vvi^*, \wt X_{\vvi,K}^*, \wt U_{j,n}$ and $\wt U_{j,n,K}$ as before,  based on $\wt G$. These are identically distributed copies of the corresponding quantities of the original model. Define
\equh\label{eq:tildeRjK}
\wt R_{j,K}\topp1= \ccbb{\pp{\bigcup_{i\in\{-K+1,\dots,0\}}A_i\topp1} \cap \pp{\bigcup_{i'\in\{j-K+1,\dots,j\}}\wt A_{i'}\topp1} = \emptyset}.
\eque
We first remark that $\proba(R_{j,K}\topp1) = \proba(\wt R_{j,K}\topp1)$ for $j\ge K$ and 
\[
\left.\pp{U_{0,n,K},U_{j,n,K}}\mmid R_{j,K}\topp1\right.
\eqd \left.\pp{U_{0,n,K},\wt U_{j,n,K}}\mmid \wt R_{j,K}\topp1\right.,
\]
where each side is understood as the conditional distribution of a bivariate random vector.
Therefore, we have
\begin{align}
\esp\pp{U_{0,n,K}^2U_{j,n,K}^2\ind_{R_{j,K}\topp1}} & = \esp\pp{U_{0,n,K}^2\wt U_{j,n,K}^2\ind_{\wt R_{j,K}\topp1}}\nonumber\\
 & = \esp\pp{U_{0,n,K}^2\wt U_{j,n,K}^2} - \esp\pp{U_{0,n,K}^2\wt U_{j,n,K}^2\ind_{(\wt R_{j,K}\topp1)^c}} \nonumber\\
  & = \esp\pp{U_{0,n,K}^2\wt U_{0,n,K}^2} - \esp\pp{U_{0,n,K}^2\wt U_{j,n,K}^2\ind_{(\wt R_{j,K}\topp1)^c}}\label{eq:UU2}
\end{align}
where in the last expression above, again the second term above is bounded by $C\proba((\wt R_{j,K}\topp1)^c)^{1/2}$, uniformly in $n$, for some positive constant $C$.
To sum up, by \eqref{eq:UU1} and \eqref{eq:UU2} we arrive at the fact that there exists a constant $L_{K,\varepsilon}$ such that 
\equh\label{eq:UU3}
\abs{\esp\pp{U_{0,n,K}^2 U_{j,n,K}^2} - \esp\pp{U_{0,n,K}^2\wt U_{0,n,K}^2}}\le \frac\varepsilon2,\; \mfa j>L_{K,\varepsilon}, n\in\N.
\eque
Finally, we will prove that 
\equh\label{eq:cov_decay2}
\limn\cov\pp{U_{n,0,K}^2,\wt U_{n,0,K}^2}=0.
\eque
This and \eqref{eq:UU3} shall yield that there exists an integer $N_{K,\varepsilon}$ such that 
\[
\abs{\esp\pp{U_{0,n,K}^2\wt U_{0,n,K}^2} - (\esp U_{0,n,K}^2)^2} < \frac\varepsilon2, \mfa n\in N_{k,\varepsilon},
\]
and  complete the proof of the lemma.

It remains to show~\eqref{eq:cov_decay2}. 
We start by establishing a conditional central limit theorem for $U_{0,n,K}$, given the ancestral lines $A_{-K+1}\topp1,\dots,A_0\topp1$. We shall actually only need the random partition on $\{-K+1,\dots,0\}$, denoted by $G_K\topp 1$, induced by these ancestral lines. 
Recall the definition of $U_{0,j,K}$ in \eqref{eq:UjnK}. We have
\equh\label{eq:U0nK}
U_{0,n,K} = \frac1{b_n\topp 2}\summ j1n X_{(0,j),K}^* = \frac1{b_n\topp 2}\summ {j_2}1n\pp{X_{(0,j_2)}-\summ{j_1}1Kp_{j_1}\topp1X_{(-j_1,j_2)}} \equiv \frac1{b_n\topp 2}\summ j1n \mathbb X_{j},
\eque
where we introduce $\mathbb X_{j} = X_{(0,j),K}^*$ to simplify the notation. Note that $\mathbb X$ depends on $K$. 

Here we need the crucial remark that, given $G\topp 1_K$, $\{\X_i\}_{i\in\Z}$ is a generalized one-dimensional Hammond--Sheffield model, with a marginal law as the conditional law of $X_{\vv0,K}^*$ given $G\topp 1_K$, and hence with bounded support (Remark \ref{rem:boundedHS}). To see this, the second expression of $U_{0,n,K}$ in \eqref{eq:U0nK} is more convenient: by definition of the two-dimensional model, it suffices to examine the partition of $\{-K+1,\dots,0\}\times\N$. 
Recall that the product partition is obtained by Cartesian products. It then follows that $\X_i\equiv \X_j$ if $i\sim j$ with respect to the random partition $G\topp2$ of the second direction of the model, and otherwise $\X_i$ and $\X_j$ are i.i.d. Note that this observation remains true if we condition on $G_K\topp1$ first; the marginal law will depend on $G_K\topp1$, but remains bounded. 
Then, Proposition \ref{prop:generalizedHS} tells that 
\equh\label{eq:g_HS}
\left.\ccbb{\frac1{b_n\topp 2}\summ i1{\floor{nt}} \X_i}_{t\in[0,1]}\mmid G_K\topp1\right.\weakto \sigma_K\ccbb{\B^H_t}_{t\in[0,1]},
\eque
where  $H = \alpha_2+1/2$ and
\[
\sigma_K^2 = \frac{\esp(\X_1^2\mid G_K\topp1)}{\sum_{k\ge0}(q_k\topp2)^2} = \frac{\esp(X_{\vv0,K}^{*2}\mid G_K\topp1)}{\sum_{k\ge 0}(q_k\topp2)^2}.
\]
See Appendix \ref{sec:appendix} for our notations for conditional limit theorems.  We only need $t=1$ to deal with $U_{0,n,K}$ in the central limit theorem here, but for the proof of finite-dimensional distributions later, we shall need the above conditional functional central limit theorem.
In particular, \eqref{eq:g_HS} yields that
\[
 U_{0,n,K}\mid G_K\topp1 \weakto  \sigma_K \cdot \calN(0,1).
\]
Introduce similarly $\wt G_K\topp1$ based on $\{\wt A_{-K+1}\topp1,\dots,\wt A_0\topp1\}$. By the same approach described above, we can show that 
\equh\label{eq:CCLT_joint}
\left.\pp{U_{0,n,K},\wt U_{0,n,K}}\mmid G_K\topp1\right.,\, \wt G_K\topp1
\weakto \pp{\sigma_K Z, \wt \sigma_K \wt Z},
\eque
where  $Z$ and $\wt Z$ are two independent standard normal random variables and $\wt \sigma_K^2 = \esp(\wt X_{\vv0,K}^{*2}\mid \wt G_K\topp1)(\sum_{k\ge 0}(q_k\topp2)^2)^{-1}$. To establish the joint convergence, by the Cram\'er--Wold device it suffices to consider, for all $a,b\in\R$, 
\begin{align*}
\left.aU_{0,n,K}+b\wt U_{0,n,K}\mmid G_K\topp1, \wt G_K\topp1\right.&
\eqd \left.\frac1{b_n\topp2}\sum_{i=1}^n\pp{a X_{(0,i),K}^*+b {\wt X}_{(0,i),K}^*}\mmid G_K\topp1, \wt G_K\topp1\right. ,
\end{align*}
where ${\wt X}_{(0,i),K}^*$ is defined similarly as $X_{(0,i),K}^*$, and the two are  assumed to be conditionally independent given $\{A_j\topp2\}_{j\in\Z}$.
Again, given $G_K\topp1$ and $\wt G_K\topp1$,
the process
\[
\ccbb{\wb \X_i}_{i\in\N} \equiv \ccbb{a X_{(0,i),K}^*+b {\wt X}_{(0,i),K}^*}_{i\in\N}
\]
is a generalized one-dimensional Hammond--Sheffield model. The normalized partial sum $\summ i1n \X_i/b_n\topp2$ then converges to a normal distribution, with variance equal to 
\[
\var\pp{\wb\X_1\mmid G_K\topp1,\wt G_K\topp 1} = \esp\bb{\pp{a X_{(0,i),K}^*+b {\wt X}_{(0,i),K}^*}^2 \mmid G_K\topp1, \wt G_K\topp1}= a^2\sigma_{K}^2+b^2\wt \sigma_{K}^2. 
\]
Hence, \eqref{eq:CCLT_joint} follows as before by Proposition \ref{prop:generalizedHS}.

As a consequence of~\eqref{eq:CCLT_joint},  we arrive at
\[
\pp{U_{0,n,K},\wt U_{0,n,K}}
\weakto \pp{\sigma_{K}Z, \wt\sigma_{K}\wt Z},\mmas n\to\infty,
\]
where now $\sigma_K$ and $\wt\sigma_K$ are random variables, and all four random variables on the right-hand side are independent. By the boundedness of $\sigma_{K}, \wt \sigma_{K}$ and the uniform integrability of $U_{n,0,K}^4$ and $\wt U_{n,0,K}^4$, it follows that
\[
\limn\cov\pp{U_{n,0,K}^2,\wt U_{n,0,K}^2} = \cov\pp{\sigma_{K}^2Z^2,\wt\sigma_{K}^2\wt Z^2} = 0.
\]
This completes the proof of~\eqref{eq:cov_decay2}.
\end{proof}

\subsection{Proof of Theorem \ref{thm:HSproduct}}
\begin{proof}[Proof of convergence of finite-dimensional distributions]
We use Cram\'er--Wold device. For $m\in\N$, let $\lambda_1,\dots,\lambda_m\in\R$ and  $\vvt\topp1,\dots,\vvt\topp m\in[0,1]^2$ be fixed.
For $\vvn\in\N^2$, denote $\vvn_{\vvt\topp1},\dots,\vvn_{\vvt\topp m}\in\N^2$ as before in~\eqref{eq:nr} and to shorten the notation, the two coordinates of $\vvn_{\vvt\topp r}$ are denoted by $n_1(r)=\floor{n_1t_1\topp r}$ and $n_2(r)=\floor{n_2 t_2\topp r}$ respectively, $\vvt\topp r$ being fixed.
We have for all $\vvn \in\N^2$,
\begin{align*}
\frac1{b_\vvn}\summ r1m\lambda_rS_{\vvn_{\vv t\topp r}} 
& = \frac1{b_{n_1}\topp1}\sum_{j_1\in\Z}\summ r1m \lambda_r  b_{n_1(r),j_1}\topp1U_{j_1,n_2(r)},
\end{align*}
where $U_{j_1,n_2(r)}$ is defined as in \eqref{eq:Uj_1,n_2}.
One can show as before that 
\[
\ccbb{\summ r1m \lambda_rb_{n_1(r),j}\topp1 U_{j,n_2(r)}}_{j\in\Z}
\]
is a martingale-difference sequence with respect to $\{\calF_{j}\topp1\}_{j\in\Z}$. Therefore we apply the central limit theorem of McLeish as in Section~\ref{sec:HSHS_CLT}. The two conditions corresponding to~\eqref{eq:McLeish1} and~\eqref{eq:McLeish2} can be verified similarly as before. The third condition~\eqref{eq:McLeish3} becomes
\[
\lim_{n_1\to\infty}\frac1{(b_{n_1}\topp1)^2}\sum_{j\in\Z}\pp{\summ r1m \lambda_rb_{n_1(r),j_1}\topp1 U_{j,n_2(r)}}^2  =  \frac{\var\pp{\summ r1m \lambda_r \B_{\vvt\topp r}^{\vvH}}}{(\sum_{\vvk\ge\vv0}q_{\vvk}^2)^2} \mbox{ in probability}.
\]
This shall follow from
\equh\label{eq:McLeish3'}
\lim_{n_1\to\infty}\frac1{(b_{n_1}\topp1)^2}\sum_{j\in\Z}b_{n_1(r),j}\topp1 b_{n_1(r'),j}\topp1 U_{j,n_2(r)}U_{j,n_2(r')}  = \frac{\cov\spp{\B_{\vvt\topp r}^{\vvH},\B_{\vvt\topp{r'}}^{\vvH}}}{(\sum_{\vvk\ge\vv0} q_{\vvk}^2)^2} \mbox{ in probability,}
\eque
for all $r,r'\in\{1,\dots,m\}$. We do so again by computing the $L^2$-convergence. Remark first that
\begin{align*}
\esp & \pp{\frac1{(b_{n_1}\topp1)^2}\sum_{j\in\Z}b_{n_1(r),j}\topp1 b_{n_1(r'),j}\topp1U_{j,n_2(r)}U_{j,n_2(r')}} \\
& = \frac1{(b_{n_1}\topp1)^2}\sum_{j_1\in\Z}b_{n_1(r),j_1}\topp 1b_{n_1(r'),j_1}\topp 1\frac1{(b_{n_2}\topp2)^2}\sum_{j_2\in\Z}b_{n_2(r),j_2}\topp 2 b_{n_2(r'),j_2}\topp 2\var (X_{\vv0}^{**})\\
& \sim \cov\pp{\B_{\vvt\topp r}^\vvH,\B_{\vvt\topp{r'}}^\vvH} \var (X_{\vv0}^{**})\mmas \vvn\to\infty,
\end{align*}
where $X_\vv0^{**}$ is defined as in \eqref{eq:defX**} and the asymptotic follows from the identity 
\begin{align*}
 \sum_{j\in\Z}b_{n(r),j}\topp m  b_{n(r'),j}\topp m
&=\frac12 \bb{(b_{n(r)}\topp m)^2+ (b_{n(r')}\topp m)^2 - |b_{n(r)}\topp m -  b_{n(r')}\topp m|^2 }\\
&=\frac12 \bb{ (b_{n(r)}\topp m)^2+ (b_{n(r')}\topp m)^2 - (b_{|n(r)-n(r')|}\topp m)^2},\quad m=1,2,
\end{align*}
and \eqref{eq:asymb_n}.
Therefore, to show~\eqref{eq:McLeish3'}, it suffices to prove, as a counterpart of \eqref{eq:ergodicity_K},
\begin{align*}
&\lim_{\vvn\to\infty}
\frac1{(b_{n_1}\topp1)^4}\sum_{j,j'\in\Z}b_{n_1(r),j}\topp1 b_{n_1(r'),j}\topp1b_{n_1(r),j'}\topp1 b_{n_1(r'),j'}\topp1\\
&\hspace{30pt}\cdot\cov\pp{U_{j,n_2(r),K}U_{j,n_2(r'),K},U_{j',n_2(r),K}U_{j',n_2(r'),K}} = 0,
\end{align*}
which, as in~\eqref{eq:CLK}, shall follow from the following lemma. The proof of convergence of finite-dimensional distributions is thus completed.
\end{proof}
\begin{Lem}\label{lem:cov_decay2}
For all $K\in\N,\varepsilon>0$, there exists $L_{K,\varepsilon}, N_{K,\varepsilon}$, such that
\[
\abs{
\cov\pp{U_{0,n_2(r),K}U_{0,n_2(r'),K},U_{j,n_2(r),K}U_{j,n_2(r'),K}}}<\varepsilon, \mfa j>L_{K,\varepsilon}, n_2>N_{K,\varepsilon}.
\]
\end{Lem}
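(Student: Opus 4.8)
The plan is to reproduce the decoupling argument from the proof of Lemma~\ref{lem:cov_decay}, now carrying two distinct second-direction time indices. We may assume $t_2\topp r,t_2\topp{r'}\in(0,1]$, since otherwise one of the four factors vanishes identically and the covariance is zero; then $n_2(r)=\floor{n_2t_2\topp r}$ and $n_2(r')=\floor{n_2t_2\topp{r'}}$ both tend to infinity with $n_2$. Abbreviate $U_0^{(s)}=U_{0,n_2(s),K}$ and $U_j^{(s)}=U_{j,n_2(s),K}$ for $s\in\{r,r'\}$. First I would use the event $R_{j,K}\topp1$ of~\eqref{eq:RjK} --- which involves only the first-direction ancestral lines around $0$ and around $j$, not the times $n_2(s)$, and satisfies $\proba(R_{j,K}\topp1)\to1$ as $j\to\infty$ --- to split $\esp(U_0^{(r)}U_0^{(r')}U_j^{(r)}U_j^{(r')})$ into its restrictions to $R_{j,K}\topp1$ and to $(R_{j,K}\topp1)^c$. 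The second piece is bounded, uniformly in $n_2$, by $C\,\proba((R_{j,K}\topp1)^c)^{1/2}$ after the Cauchy--Schwarz and H\"older inequalities and the uniform moment bound $\sup_n\esp|U_{0,n,K}|^8<\infty$ from Lemma~\ref{lem:Uapprox}(i) (note $U_j^{(s)}\eqd U_0^{(s)}$ by stationarity in the first direction).

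For the piece on $R_{j,K}\topp1$, I would decouple exactly as in the proof of Lemma~\ref{lem:cov_decay}: introduce an independent copy $\{\wt A_i\topp1\}_{i\in\Z}$ of the first-direction ancestral lines, the associated product partition $\wt G$, the quantities $\wt X_{\vvi,K}^*$ and $\wt U_{j,n,K}$, and the event $\wt R_{j,K}\topp1$ of~\eqref{eq:tildeRjK}. Since $R_{j,K}\topp1$ and $\wt R_{j,K}\topp1$ do not involve the times $n_2(s)$, the same equality in law used there gives, for $j\ge K$, $\esp(U_0^{(r)}U_0^{(r')}U_j^{(r)}U_j^{(r')}\ind_{R_{j,K}\topp1})=\esp(U_0^{(r)}U_0^{(r')}\wt U_j^{(r)}\wt U_j^{(r')}\ind_{\wt R_{j,K}\topp1})$; removing the indicator costs at most $C\,\proba((\wt R_{j,K}\topp1)^c)^{1/2}$, and by stationarity of $\{\wt A_i\topp1\}_{i\in\Z}$ and its independence from the rest of the model, $(U_0^{(r)},U_0^{(r')},\wt U_j^{(r)},\wt U_j^{(r')})\eqd(U_0^{(r)},U_0^{(r')},\wt U_0^{(r)},\wt U_0^{(r')})$. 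Collecting these bounds, for $j\ge K$ and uniformly in $n_2$ one gets
\[
\abs{\cov(U_0^{(r)}U_0^{(r')},U_j^{(r)}U_j^{(r')})-\cov(U_0^{(r)}U_0^{(r')},\wt U_0^{(r)}\wt U_0^{(r')})}\le C\,\proba((R_{j,K}\topp1)^c)^{1/2},
\]
so the lemma reduces to showing $\lim_{n_2\to\infty}\cov(U_0^{(r)}U_0^{(r')},\wt U_0^{(r)}\wt U_0^{(r')})=0$.

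For this last reduction I would invoke the functional form of~\eqref{eq:g_HS}. Given $G_K\topp1$ the array $\{X_{(0,i),K}^*\}_{i}$ is a generalized one-dimensional Hammond--Sheffield model with bounded marginal, likewise $\{\wt X_{(0,i),K}^*\}_i$ given $\wt G_K\topp1$, and the two arrays are conditionally independent given $(G_K\topp1,\wt G_K\topp1)$ and $\{A_j\topp2\}_{j\in\Z}$. Applying the functional CLT~\eqref{eq:g_HS} to the generalized one-dimensional Hammond--Sheffield models built from the linear combinations $aX_{(0,\cdot),K}^*+a'\wt X_{(0,\cdot),K}^*$, using the scaling $b_{n_2(s)}\topp2/b_{n_2}\topp2\to(t_2\topp s)^{H_2}$, and invoking the Cram\'er--Wold device exactly as in~\eqref{eq:CCLT_joint}, one obtains the joint conditional convergence
\[
\left.\pp{U_0^{(r)},U_0^{(r')},\wt U_0^{(r)},\wt U_0^{(r')}}\mmid G_K\topp1,\wt G_K\topp1\right.\weakto\pp{\sigma_K\B_{t_2\topp r}^{H_2},\sigma_K\B_{t_2\topp{r'}}^{H_2},\wt\sigma_K\wt\B_{t_2\topp r}^{H_2},\wt\sigma_K\wt\B_{t_2\topp{r'}}^{H_2}},
\]
with $H_2=\alpha_2+1/2$, where crucially $\B^{H_2}$ and $\wt\B^{H_2}$ appear as \emph{independent} fractional Brownian motions because the cross terms in the limiting Gaussian covariance carry the factor $\esp(X_{(0,\cdot),K}^*\wt X_{(0,\cdot),K}^*\mid G_K\topp1,\wt G_K\topp1)=0$. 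Since $\sigma_K$ is $G_K\topp1$-measurable and $\wt\sigma_K$ is measurable with respect to the independent $\wt G_K\topp1$, unconditioning shows that $(U_0^{(r)},U_0^{(r')})$ and $(\wt U_0^{(r)},\wt U_0^{(r')})$ converge jointly to a limit whose two coordinate pairs are independent. Together with uniform integrability of $\{U_0^{(r)}U_0^{(r')}\wt U_0^{(r)}\wt U_0^{(r')}\}_{n_2}$ and of $\{U_0^{(r)}U_0^{(r')}\}_{n_2}$ --- again from Lemma~\ref{lem:Uapprox}(i) via H\"older --- this gives $\lim_{n_2\to\infty}\cov(U_0^{(r)}U_0^{(r')},\wt U_0^{(r)}\wt U_0^{(r')})=0$. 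Finally, choosing $L_{K,\varepsilon}\ge K$ so that $C\,\proba((R_{j,K}\topp1)^c)^{1/2}<\varepsilon/2$ for $j>L_{K,\varepsilon}$, and then $N_{K,\varepsilon}$ so that $\abs{\cov(U_0^{(r)}U_0^{(r')},\wt U_0^{(r)}\wt U_0^{(r')})}<\varepsilon/2$ for $n_2>N_{K,\varepsilon}$, gives the claim. I expect the main obstacle to be the joint conditional functional central limit theorem for the decoupled pair at the two distinct times $t_2\topp r,t_2\topp{r'}$ and, within it, checking that the original and decoupled parts of the Gaussian limit are independent; this is precisely the single-time computation of Lemma~\ref{lem:cov_decay} upgraded to the functional level, which is why~\eqref{eq:g_HS} was recorded in functional form.
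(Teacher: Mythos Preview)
Your proposal is correct and follows essentially the same approach as the paper: reduce via the decoupling argument of Lemma~\ref{lem:cov_decay} to showing $\cov(U_{0,n_2(r),K}U_{0,n_2(r'),K},\wt U_{0,n_2(r),K}\wt U_{0,n_2(r'),K})\to0$, then deduce this from the joint conditional convergence to $(\sigma_K\B_{t_2\topp r}^{H_2},\sigma_K\B_{t_2\topp{r'}}^{H_2},\wt\sigma_K\wt\B_{t_2\topp r}^{H_2},\wt\sigma_K\wt\B_{t_2\topp{r'}}^{H_2})$ obtained from the functional form~\eqref{eq:g_HS} via Cram\'er--Wold, with $\B^{H_2}\perp\wt\B^{H_2}$. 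You have simply spelled out in detail what the paper summarizes in a few lines, including the correct use of higher moments from Lemma~\ref{lem:Uapprox} and the reason the cross terms vanish.
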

\begin{proof}
By the same idea as in the proof of Lemma~\ref{lem:cov_decay}, it suffices to show
\[
\lim_{n_2\to\infty}\cov\pp{U_{0,n_2(r),K}U_{0,n_2(r'),K},\wt U_{0,n_2(r),K}\wt U_{0,n_2(r'),K}} = 0.
\]
As a consequence of \eqref{eq:g_HS}, instead of~\eqref{eq:CCLT_joint} we now have
\begin{multline*}
\left.\pp{U_{0,n_2(r),K},U_{0,n_2(r'),K},\wt U_{0,n_2(r),K},\wt U_{0,n_2(r'),K}}\mmid{G_K\topp1, \wt G_K\topp1}\right.
\\
\weakto \pp{\sigma_{K}\B_{t_2\topp r}^{H_2},\sigma_{K}\B_{t_2\topp {r'}}^{H_2}, \wt\sigma_{K}\wt \B_{t_2\topp {r}}^{H_2},\wt\sigma_{K}\wt \B_{t_2\topp {r'}}^{H_2}},
\end{multline*}
where $\B^{H_2}$ and $\wt \B^{H_2}$ are i.i.d.~copies of fractional Brownian motion with Hurst index $H_2=\alpha_2+1/2$,  $\sigma_{K}$ and $\wt\sigma_{K}$ are as before. 
This completes the proof.
\end{proof}

\begin{proof}[Proof of tightness]
Again, applying Bickel--Wichura's criterion \citep{bickel71convergence} and using \eqref{eq:asympb_n}, the tightness will follow from the existence of a real $\gamma>1$ such that
\eqnh
\esp\pp{\frac{S_{\vvm}}{b_\vvn}}^2 \leq C \prodd q12 \pp{\frac{m_q}{n_q}}^{\gamma},\;\text{ for all }\vvm\le\vvn.
\eqne
Let $\delta>0$ be such that $\gamma = 2\min(\alpha_1,\alpha_2)+1-\delta>1$. Using the representation of $S_\vvm$ as in \eqref{eq:Sn/bn} and the representation of $U_{0,n}$ as in \eqref{eq:U_n}, applying Burkholder's inequality twice, we get
\[
\esp\pp{\frac{S_{\vvm}}{b_\vvn}}^2
\le C \pp{\frac{b_{m_1}\topp1}{b_{n_1}\topp1}}^2 \pp{\frac{b_{m_2}\topp2}{b_{n_2}\topp2}}^2\|X_\vv0^{**}\|_{2}^2 \le C \prodd q12 \pp{\frac{m_q}{n_q}}^{2\alpha_q+1}\frac{L_q(n_q)}{L_q(m_q)}.
\]
We obtain the desired result by Potter's bound ${L_q(n_q)}/{L_q(m_q)}\le C\spp{{m_q}/{n_q}}^{-\delta}$.
\end{proof}


\section{Combining Hammond--Sheffield model and Karlin model}\label{sec:combined}
In this section, we combine a one-dimensional Hammond--Sheffield model and  a one-dimensional randomized Karlin model together, and show that the combined model converges weakly to a fractional Brownian sheet with Hurst indices $H_1\in(1/2,1)$ and $H_2\in(0,1/2)$.

\subsection{Model and main result}
Consider two random partitions from the one-dimensional Hammond--Sheffield model and the randomized Karlin model, respectively. Assume the two random partitions are independent. Namely, let $G\topp1 = G(E,V)$ be the underlying random forest structure of the Hammond--Sheffield model generated by a distribution $\mu_1$, and let $\{A_j\topp1\}_{j\in\Z}$ be the associated ancestral lines. Let $\{Y_j\topp2\}_{j\in\N}$ be i.i.d.~random variables with common distribution $\mu_2$. Suppose $\mu_i, i=1,2$ are probability measures on $\N$ satisfying \eqref{eq:RVHS} and \eqref{eq:RV} respectively with $\alpha_1\in(0,1/2)$ and $\alpha_2\in(0,1)$.
Assume $G\topp1$ and $\{Y_j\topp2\}_{j\in\N}$ are independent. 
Now, consider the product of the two random partitions.
This is the random partition of $\Z\times\N$ determined by 
\[
\vvi\sim\vvj \;\mbox{ if and only if }\;  A_{i_1}\topp1\cap A_{j_1}\topp1\neq\emptyset \mand Y_{i_2}\topp2 = Y_{j_2}\topp2.
\]

Next, given the partition, we apply the identical assignment rule in the first direction, and the alternating assignment rule in the second  (see Figure~\ref{fig:1}, right).
Given a collection of components determined by $G\topp1$ and $\{Y_n\topp2\}_{n\in\N}$, we assign values $X_\calC = \{X_\vvj\}_{\vvj\in\calC}$ as follows. Let $\{\epsilon_\calC\}_{\calC}$ be a collection of i.i.d.~random variables taking values in $\{-1,1\}$ with equal probabilities, indexed by different components $\calC$. For each $\calC$ fixed, express this as
\[
\calC = \calC\topp1\times \{j_\ell\topp2\}_{\ell\in\N} \qmwith 1\le j_1\topp2\le j_2\topp2\le\cdots,
\]
and set
\[
X_\vvj = (-1)^{\ell+1}\epsilon_\calC, \quad \mbox{ for } \vvj = (j_1,j_2)\in\calC,\; j_2 = j\topp2_\ell.
\]
The random field $\{X_\vvj\}_{\vvj\in\Z\times\N}$ constructed this way is referred to as the two-dimensional combined model.
The main result of this section is the following invariance principle for $S_\vvn = \sum_{\vvi\in[\vv1,\vvn]}X_\vvi$. 
\begin{Thm}\label{thm:WIPcombined}
For the two-dimensional combined model with $\alpha_1\in(0,1/2)$, $\alpha_2\in(0,1)$, 
and slowly varying functions $L_1$, $L_2$ respectively,
\eqnh
\ccbb{\frac{S_{\floor{\vvn\cdot\vvt}}}{n_1^{H_1}n_2^{H_2}L_1(n_1)^{-1}L_2(n_2)^{1/2}}}_{\vvt\in[0,1]^2}\weakto \sigma_\vv\alpha \ccbb{\B^{\vv H}_\vvt}_{\vvt\in[0,1]^2}
\eqne
in $D([0,1]^2)$, as $\vvn\to\infty$, with $H_1 = \alpha_1+1/2\in(1/2,1)$, $H_2 = \alpha_2/2 \in(0,1/2)$, and
\[
\sigma_\vv\alpha^2 = \frac{C_{\alpha_1}\Gamma(1-\alpha_2)2^{\alpha_2-1}}{\sum_{j\ge 0}(q_j\topp1)^2},
\]
for $C_{\alpha_1}$ defined in~\eqref{eq:Calpha} and $q_j\topp1=\proba(0\in A_j\topp1)$, $j\in\Z$.
\end{Thm}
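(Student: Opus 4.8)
The plan is to run the martingale machinery of Section~\ref{sec:HS} in the first (Hammond--Sheffield) direction, and to treat the second (Karlin) direction, after conditioning on the first-direction partition, by the one-dimensional randomized Karlin statistics of Section~\ref{sec:Karlin}. Since the first-direction partition is a Hammond--Sheffield partition and the first-direction rule is the identical one, the proof of Lemma~\ref{lem:projection} applies verbatim (it uses only that two cells with the same second coordinate related by a first-direction ancestral line carry the same spin, which is true for the mixed rule): with $\calF_m\topp1=\sigma\{X_\vvi:i_1<m\}$, $\calP_m\topp1=\esp(\cdot\mid\calF_{m+1}\topp1)-\esp(\cdot\mid\calF_m\topp1)$ and $X_\vvj^*=\calP_{j_1}\topp1(X_\vvj)$, one gets $\calP_m\topp1(X_\vvn)=q_{n_1-m}\topp1 X_{(m,n_2)}^*$, the $L^2$-expansion $X_\vvn=\sum_{m\le n_1}q_{n_1-m}\topp1 X_{(m,n_2)}^*$, and $\var(X_{(0,j_2)}^*)=\spp{\sif k0 (q_k\topp1)^2}^{-1}$ for every $j_2$. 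Writing $Z_2(n)=n^{\alpha_2/2}L_2(n)^{1/2}$ and $b_\vvn=b_{n_1}\topp1 Z_2(n_2)$ (so $b_\vvn^2\sim C_{\alpha_1}n_1^{2\alpha_1+1}L_1(n_1)^{-2}\,n_2^{\alpha_2}L_2(n_2)$ by~\eqref{eq:asymb_n}), this yields the counterpart of Proposition~\ref{prop:representation},
\[
\frac{S_\vvn}{b_\vvn}=\frac1{b_{n_1}\topp1}\sum_{j_1\in\Z}b_{n_1,j_1}\topp1 U_{j_1,n_2},\qquad U_{j_1,n_2}=\frac1{Z_2(n_2)}\sum_{j_2=1}^{n_2}X_{(j_1,j_2)}^*,
\]
with $\{U_{j,n}\}_{j\in\Z}$ a stationary $\{\calF_j\topp1\}$-martingale-difference sequence for each $n$. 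We then seek $S_\vvn/b_\vvn\weakto\calN(0,\rho^2)$, $\rho^2:=\Gamma(1-\alpha_2)2^{\alpha_2-1}\spp{\sif k0(q_k\topp1)^2}^{-1}$, through McLeish's martingale central limit theorem \citep{mcleish74dependent}; since $S_\vvn/(n_1^{H_1}n_2^{H_2}L_1(n_1)^{-1}L_2(n_2)^{1/2})=\sqrt{C_{\alpha_1}}\,(1+o(1))\,S_\vvn/b_\vvn$ and $\sigma_\vv\alpha^2=C_{\alpha_1}\rho^2$, this gives the stated normalization.

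The bridge between the two directions is that, grouping the cells of the second-direction partition by box,
\[
\sum_{j_2=1}^{n}X_{(j_1,j_2)}^*=\sum_{\ell\ge1}W_\ell\,\indd{Y_{n,\ell}\topp2~{\rm odd}},
\]
where, conditionally on the first-direction partition $G\topp1$, the $W_\ell=\epsilon_{(\calC_{j_1}\topp1,\ell)}-\sum_{k\ge1}p_k\topp1\epsilon_{(\calC_{j_1-k}\topp1,\ell)}$ are i.i.d., centered, symmetric and bounded by $2$, and independent of $\{Y_j\topp2\}$; thus $\sum_{j_2=1}^n X_{(j_1,j_2)}^*$ has, conditionally on $G\topp1$, the law of the partial sum of a generalized one-dimensional randomized Karlin model of index $\alpha_2$ with bounded symmetric spins, to which Proposition~\ref{prop:Karlin1d} applies. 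A direct computation with the Karlin box statistics gives $\esp\pp{\sum_{j_2=1}^n X_{(0,j_2)}^*}^2=\esp\!\bb{\sif r1 K_{n,2r-1}\topp2}\cdot\esp W_1^2\sim\rho^2 Z_2(n)^2$ by Lemma~\ref{lem:K}, so $\esp U_{0,n}^2\to\rho^2$, while $\sup_n\esp U_{0,n}^{2p}<\infty$ for every $p\ge1$ (conditionally on $G\topp1,\{Y_j\topp2\}$ the sum is a sum of $\sif r1 K_{n,2r-1}\topp2$ independent bounded variables, whence Burkholder's inequality and Lemma~\ref{lem:K} apply). The analogue of Lemma~\ref{lem:sup_bnj}, $\sup_j b_{n,j}\topp1/b_n\topp1\to0$, holds for $\alpha_1\in(0,1/2)$ exactly as before, and then conditions~\eqref{eq:McLeish1} and~\eqref{eq:McLeish2} follow word for word as in Section~\ref{sec:HSHS_CLT}.

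Condition~\eqref{eq:McLeish3} is the heart of the matter and the main obstacle. As in Section~\ref{sec:HSHS_CLT} we pass to its $L^2$-version, which reduces, after truncating to $X_{\vvj,K}^*=X_\vvj-\sum_{k=1}^K p_k\topp1 X_{(j_1-k,j_2)}$ and $U_{j_1,n,K}$ accordingly (the uniform bounds of Lemma~\ref{lem:Uapprox} carrying over since the truncated conditional spins stay bounded), to the decay estimate $\abs{\cov(U_{0,n,K}^2,U_{j,n,K}^2)}<\varepsilon$ for $j>L_{K,\varepsilon}$, $n>N_{K,\varepsilon}$, i.e.\ the analogue of Lemma~\ref{lem:cov_decay}. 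The proof follows the decoupling of Section~\ref{sec:HSHS_CLT}: introduce an independent copy $\{\wt A_j\topp1\}$ of the first-direction ancestral lines (hence $\wt U_{j,n,K}$) and the events $R_{j,K}\topp1,\wt R_{j,K}\topp1$ forcing disjointness of the first-direction ancestral lines attached to the blocks $\{-K+1,\dots,0\}$ and $\{j-K+1,\dots,j\}$; on these events the two blocks' first-direction partitions decouple and matters reduce to $\limn\cov(U_{0,n,K}^2,\wt U_{0,n,K}^2)=0$. The crucial point — and the one place where the Karlin direction must be handled with care, because $U_{0,n,K}$ and $\wt U_{0,n,K}$ still share the Karlin partition $\{Y_j\topp2\}$ — is that, conditionally on the block partitions $G_K\topp1,\wt G_K\topp1$, the process $\{aX_{(0,i),K}^*+b\wt X_{(0,i),K}^*\}_{i\in\N}$ is again a generalized one-dimensional randomized Karlin model with bounded centered spins and conditional variance $a^2\sigma_K^2+b^2\wt\sigma_K^2$; Proposition~\ref{prop:Karlin1d} and the Cram\'er--Wold device then yield $(U_{0,n,K},\wt U_{0,n,K})\mid G_K\topp1,\wt G_K\topp1\weakto(\sigma_K Z,\wt\sigma_K\wt Z)$ with $Z,\wt Z$ independent standard normals, hence unconditionally $(U_{0,n,K},\wt U_{0,n,K})\weakto(\sigma_K Z,\wt\sigma_K\wt Z)$ with all four factors independent, and uniform integrability of the fourth moments gives $\cov(U_{0,n,K}^2,\wt U_{0,n,K}^2)\to\cov(\sigma_K^2Z^2,\wt\sigma_K^2\wt Z^2)=0$, completing the central limit theorem.

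For the convergence of finite-dimensional distributions one applies the same scheme to $\sum_{r=1}^m\lambda_r S_{\vvn_{\vvt\topp r}}/b_\vvn$ via the Cram\'er--Wold device: the $j_1$-martingale-difference representation is preserved, \eqref{eq:McLeish1}--\eqref{eq:McLeish2} go through, and the ergodicity condition reduces — using the analogue of Lemma~\ref{lem:cov_decay2}, which needs the conditional \emph{functional} central limit theorem for the generalized Karlin model (Proposition~\ref{prop:Karlin1d}) — to the covariance of $\sum_r\lambda_r\B^\vvH_{\vvt\topp r}$; the product covariance of the fractional Brownian sheet emerges from the identity $\sum_j b_{n(r),j}\topp1 b_{n(r'),j}\topp1=\tfrac12\spp{(b_{n(r)}\topp1)^2+(b_{n(r')}\topp1)^2-(b_{|n(r)-n(r')|}\topp1)^2}$ together with~\eqref{eq:asymb_n} in the first coordinate (giving the factor with $H_1=\alpha_1+1/2$) and from the Karlin covariance $\tfrac12(t^{\alpha_2}+s^{\alpha_2}-|t-s|^{\alpha_2})$ of Proposition~\ref{prop:Karlin1d} in the second (giving the factor with $H_2=\alpha_2/2$). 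Tightness follows from Bickel--Wichura's criterion \citep{bickel71convergence}: choosing $p>2/\alpha_2$ (so $p>2$), Burkholder's inequality in $j_1$ and the uniform bound $\sup_m\esp|U_{0,m}|^p<\infty$ give $\esp(S_\vvm/b_\vvn)^p\le C\,(b_{m_1}\topp1/b_{n_1}\topp1)^p(Z_2(m_2)/Z_2(n_2))^p\le C\,(m_1/n_1)^{(\alpha_1+1/2)p}(m_2/n_2)^{(\alpha_2/2)p}$ times slowly varying ratios, and Potter's bound absorbs the latter at the cost of a slightly smaller exponent, still exceeding $1$ in each coordinate since $\alpha_1>0$ and $p>2/\alpha_2$.
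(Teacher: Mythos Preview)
Your proposal is correct and follows essentially the same route as the paper: the first-direction martingale decomposition via Lemma~\ref{lem:projection}, the truncation $X_{\vvj,K}^*$, McLeish's theorem with the key ergodicity condition~\eqref{eq:McLeish3} reduced by the decoupling argument (independent copy $\{\wt A_j\topp1\}$, events $R_{j,K}\topp1$, $\wt R_{j,K}\topp1$) to a joint conditional CLT for $(U_{0,n,K},\wt U_{0,n,K})$ given $G_K\topp1,\wt G_K\topp1$, which is then obtained from Proposition~\ref{prop:Karlin1d} for the generalized Karlin model. The only cosmetic difference is that the paper normalizes the second direction by $a_n\topp2=(\esp\wt K_n\topp2)^{1/2}$ rather than your $Z_2(n)=n^{\alpha_2/2}L_2(n)^{1/2}$, which makes $\esp U_{0,n}^2$ exactly constant rather than merely convergent; this has no effect on the argument.
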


\subsection{Proof of Theorem~\ref{thm:WIPcombined}}

The proof follows the same strategy as for  the two-dimensional Hammond-Sheffield model in Sections \ref{sec:representation} and \ref{sec:HSHS_CLT}.
We first introduce the sequence $\{X_\vvj^*\}_{j\in\Z}$ defined as in \eqref{eq:X*} by 
\[
 X_\vvj^*=X_\vvj-\esp(X_\vvj \mid \calF_{j_1}\topp1),\quad \vvj\in\Z,
\]
where $\calF_j\topp1=\sigma\{X_\vvi\mid i_1<j,\, i_2\in\N\}$. 
Note that, to draw a parallel with Sections \ref{sec:representation} and~\ref{sec:HSHS_CLT}, we keep the same notation but the variables $X_\vvj^*$ here are different from the preceding section since the dependence in the second direction is given by a partition from the Karlin model.
Nevertheless, for any $j_2\in\N$, the sequence $\{X_\vvj\}_{j_1\in\Z}$ is a martingale-difference sequence with respect to $\{\calF_{j_1}\topp1\}_{j_1\in \Z}$. So, Lemma~\ref{lem:projection} remains valid here (the proof is exactly the same) and we thus have
\[
 S_\vvn=\sum_{j_1\in\Z} b_{n_1,j_1}\topp1\sum_{j_2=1}^{n_2}X_\vvj^*,
\]
with $b_{n,j}\topp1 = \sum_{k=1}^n q_{k-j}\topp1$ defined as before.
Recall from \eqref{eq:asymb_n} that
\[
( b_{n}\topp1)^2=\sum_{j\in\Z}(b_{n,j}\topp1)^2  \sim C_{\alpha_1} n^{2\alpha_1 +1}L_1(n)^{-2},\mmas n\to\infty.
\]
Because of the alternating assignment rule in the second direction, we need to consider the number
$\wt K_{n}\topp2=\sum_{i=1}^\infty K_{n,2i-1}\topp2$ of odd-occupancy boxes, that is the number of values appearing an odd number of times among $\{Y_1\topp2,\ldots, Y_n\topp2\}$.
Recall from Lemma~\ref{lem:K} that
\[
 (a_n\topp2)^2 =\esp\wt K_{n}\topp2 \sim  \Gamma(1-\alpha_2) 2^{\alpha_2-1}n^{\alpha_2}L_2(n), \mmas n\to\infty.
\]
This time, for all $\vvn\in\N^2$, we can write
\[
 \frac{S_\vvn}{b_{n_1}\topp1 a_{n_2}\topp2}=\frac{1}{b_{n_1}\topp1}\sum_{j\in\Z}b_{n_1,j_1}\topp1 U_{j_1,n_2} \quad\text{ with }\quad  U_{j_1,n_2}=\frac{1}{a_{n_2}\topp2}\sum_{i=1}^{n_2} X_\vvj^*.
\]
This is the counterpart of Proposition \ref{prop:representation}, representing the normalized partial sum of interest as a weighted linear process with stationary martingale-difference innovations.

We then introduce, for all $K\ge 1$,  the approximations
\[
X_{\vvj,K}^*=X_\vvj-\sum_{k=1}^K p_k\topp1 X_{(j_1-k,j_2)}\quad\text{ and }\quad U_{j_1,n,K}=\frac{1}{a_{n}\topp2} \sum_{j_2=1}^n X_{\vvj,K}^*,
\]
for all $j_1\in\Z$, $j_2\in\N$, $n\in\N$. 

\begin{proof}[Proof of convergence of finite-dimensional distributions] This can be done as in Section~\ref{sec:HSHS_CLT} by the use of Cram\'er--Wold device and McLeish's theorem \citep{mcleish74dependent}. For this purpose, we only need to show that the conclusions of Lemmas~\ref{lem:Umoment},~\ref{lem:Uapprox},~\ref{lem:cov_decay}, and~\ref{lem:cov_decay2} are still valid with respect to the newly defined  $X_\vvj^*$, $X_{\vvj,K}^*$, $U_{j_1,n_2}$ and $U_{j_1,n_2,K}$. For the sake of convenience, we restate  Lemmas \ref{lem:Umoment} and \ref{lem:Uapprox} in Lemma \ref{lem:U} below, and restate Lemma \ref{lem:cov_decay} in Lemma \ref{lem:cov_decay3} below. 
The core arguments of Lemma \ref{lem:cov_decay2} are all in Lemma \ref{lem:cov_decay} and we therefore omit the proof.
\end{proof}

\begin{Lem}\label{lem:U}(i) For all $n\in\N$, $\esp U_{0,n}^2 = (\sum_{\vvk\ge\vv0}q_\vvk^2)\inv<\infty$. 

\noindent (ii) For all $p\in\N$ and $K\in\N$, $\esp U_{0,n}^{2p}$ and $\esp U_{0,n,K}^{2p}$ are uniformly bounded.

\noindent (iii) For all $p\ge 1$, $\lim_{K\to\infty}\sup_{n\in\N}\esp|U_{0,n}-U_{0,n,K}|^{2p} = 0$. 
\end{Lem}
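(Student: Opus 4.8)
The plan is to derive all three assertions from one-dimensional Karlin-model estimates, by freezing the first-direction forest. For part~(i), write $T_m=\sum_{i=1}^n X_{(m,i)}$ for the $m$-th column sum. Because $0\sim_1 -J_0\topp1$ and the points $(0,j)$ and $(-J_0\topp1,j)$ share the second coordinate, $X_{(0,j)}=X_{(-J_0\topp1,j)}$, so $\sum_{j=1}^n X_{(0,j)}^*=T_0-\sum_{k\ge1}p_k\topp1 T_{-k}=\sum_{k\ge1}\spp{\indd{J_0\topp1=k}-p_k\topp1}T_{-k}$, with $J_0\topp1$ independent of $\{T_{-k}\}_{k\ge1}$. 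Conditioning on $\sigma(Y\topp2)$, each column is a one-dimensional Karlin model, hence, as in the proof of Proposition~\ref{prop:CLTd}, $\esp(T_m^2\mid Y\topp2)=\wt K_n\topp2$; the same computation, using that the component signs are i.i.d.\ across distinct components, gives $\esp(T_m T_{m'}\mid Y\topp2)=\proba\spp{A_m\topp1\cap A_{m'}\topp1\neq\emptyset}\wt K_n\topp2$, so that $\esp(T_m T_{m'})=\proba\spp{A_m\topp1\cap A_{m'}\topp1\neq\emptyset}(a_n\topp2)^2$. Substituting, $\esp\pp{\sum_{j=1}^n X_{(0,j)}^*}^2=(a_n\topp2)^2\var(X_\vv0^*)$, where the coefficient of $(a_n\topp2)^2$ is exactly the quantity evaluated in the proof of Lemma~\ref{lem:projection}(ii); dividing by $(a_n\topp2)^2$ gives $\esp U_{0,n}^2=\var(X_\vv0^*)=\spp{\sum_{k\ge0}(q_k\topp1)^2}^{-1}$ for every $n$.

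For parts~(ii) and~(iii), the key point is that, conditionally on the first-direction forest $\calG\topp1$, the sequence $\{X_{(0,j),K}^*\}_{j\in\N}$ is of generalized one-dimensional Karlin type on the partition $\{Y_j\topp2\}$, with a centered per-box amplitude bounded by a constant not depending on $K$. Indeed, if $Y_j\topp2=\ell$ then $X_{(0,j),K}^*=(-1)^{Y_{j,\ell}\topp2+1}V_\ell$, where $V_\ell=\varepsilon_{0,\ell}-\sum_{k=1}^K p_k\topp1\varepsilon_{k,\ell}$ and $\varepsilon_{k,\ell}$ denotes the $\pm1$ sign attached to the product component whose first factor is the first-direction class of $-k$ and whose second factor is the box~$\ell$. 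Given $\calG\topp1$, the $V_\ell$ (for $\ell$ ranging over boxes) are i.i.d., centered, independent of $\{Y_j\topp2\}$, and $|V_\ell|\le 1+\sum_{k\ge1}p_k\topp1=2$. The same holds for $\{X_{(0,j)}^*\}_j$, with amplitude still bounded by $2$, and for $\{X_{(0,j)}^*-X_{(0,j),K}^*\}_j=\{-\sum_{k>K}p_k\topp1 X_{(-k,j)}\}_j$, whose amplitude $W_\ell=-\sum_{k>K}p_k\topp1\varepsilon_{k,\ell}$ satisfies $|W_\ell|\le r_K:=\sum_{k>K}p_k\topp1\to0$ as $K\to\infty$.

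With this reduction in hand, one applies the one-dimensional Karlin moment bound exactly as in the tightness proof of Theorem~\ref{thm:1} and in the proof of Lemma~\ref{lem:Umoment}: conditioning further on $\sigma(Y\topp2)$, $\sum_{j=1}^n X_{(0,j),K}^*$ is a sum of $\wt K_n\topp2$ conditionally independent centered variables bounded by $2$, so Burkholder's inequality yields $\esp\pp{\abs{\sum_{j=1}^n X_{(0,j),K}^*}^{2p}\mmid\calG\topp1,Y\topp2}\le C_p(\wt K_n\topp2)^p$; taking expectations, using $\wt K_n\topp2\le K_n\topp2$ together with the bound~\eqref{eq:Knm} of Lemma~\ref{lem:K} and $(a_n\topp2)^2\sim\Gamma(1-\alpha_2)2^{\alpha_2-1}n^{\alpha_2}L_2(n)$, one obtains $\sup_n\esp U_{0,n,K}^{2p}<\infty$ with a bound independent of $K$; the identical argument with $U_{0,n}$ proves~(ii). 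Running the same estimate on $\sum_{j=1}^n\spp{X_{(0,j)}^*-X_{(0,j),K}^*}$ with the amplitude bound $r_K$ in place of $2$ gives $\esp\abs{U_{0,n}-U_{0,n,K}}^{2p}\le C_p r_K^{2p}$ uniformly in $n$, proving~(iii).

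The main obstacle is the conditional reduction in the preceding two paragraphs: one must check that, once the first-direction forest is fixed, the $j$-indexed sequences really are generalized one-dimensional Karlin models, with amplitudes $V_\ell$ and $W_\ell$ that are i.i.d.\ over boxes, centered, independent of the second-direction partition $\{Y_j\topp2\}$, and bounded uniformly (by $2$, resp.\ by $r_K$). Once this structural fact is in place, parts~(ii) and~(iii) are reruns of the one-dimensional Karlin estimates of Section~\ref{sec:Karlin}, and part~(i) is the variance computation of Lemma~\ref{lem:projection}(ii).
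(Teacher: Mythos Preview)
Your proposal is correct and follows essentially the same approach as the paper: for parts~(ii) and~(iii) both arguments condition on $\calG\topp1$ and $\calY\topp2$, recognize the column sum as a sum of $\wt K_n\topp2$ conditionally i.i.d.\ bounded variables (your $V_\ell$, $W_\ell$ are the paper's $\epsilon_i'$ made explicit), apply Burkholder's inequality, and invoke the $L^p$-boundedness of $\wt K_n\topp2/(a_n\topp2)^2$ from Lemma~\ref{lem:K}. For part~(i) you use a slightly different but equivalent decomposition via the column sums $T_{-k}$ and the identity $T_0=\sum_k\indd{J_0\topp1=k}T_{-k}$, whereas the paper conditions directly on $(\calG\topp1,\calY\topp2)$; both routes reduce to the variance computation of Lemma~\ref{lem:projection}(ii) and yield $\var(X_\vv0^*)=\spp{\sum_{k\ge0}(q_k\topp1)^2}^{-1}$ (the $\spp{\sum_{\vvk\ge\vv0}q_\vvk^2}^{-1}$ in the statement is a leftover from the two-dimensional Hammond--Sheffield notation).
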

\begin{proof}
Denoting by $\calG\topp1$ and $\calY\topp2$ the $\sigma$-fields generated by $G\topp1$ and $\{Y_j\topp2\}_{j\in\N}$ respectively,  as for \eqref{eq:S_nK_n}, we see that for all $n\in\N$, $j_1\in\N$,
\[
\left.\sum_{j_2=1}^nX_\vvj^* \mmid \calG\topp1,\calY\topp2 \right.\eqd \left.\sum_{i=1}^{\wt K_n\topp2} \epsilon_i'\mmid  \calG\topp1,\right.
\]
where the random variables $\{\epsilon_i'\}_{i\in\N}$ are conditionally independent given $\calG\topp1$, independent of $\calY\topp2$, and for all $i \in\N$, the conditional distribution $\epsilon_i'\mid \calG\topp1$ is the same as the conditional distribution $X_\vv0^*\mid  \calG\topp1$. Note in the identity above, without the conditioning on $\calG\topp1$, the $\{\epsilon'_i\}_{i\in\N}$ on the right-hand side are no longer independent. 
We can thus write
\begin{align*}
 \esp\pp{ U_{0,n}^2\mmid \calG\topp1, \calY\topp2}
&=\frac{1}{(a_n\topp2)^2} \esp\bb{ \pp{\sum_{j_2=1}^nX_\vvj^*}^2\mmid \calG\topp1, \calY\topp2}\\
&=\frac{1}{(a_n\topp2)^2} \esp\bb{ \pp{\sum_{i=1}^{\wt K_n\topp2} \epsilon_i'}^2 \mmid \calG\topp1,\calY\topp2} =\frac{\wt K_n\topp2}{(a_n\topp2)^2} \esp\pp{\epsilon_0'^2\mid\calG\topp1},
\end{align*}
Thus
\[
  \esp\pp{ U_{0,n}^2}=\var(X_\vv0^*)<\infty,\; \text{uniformly in }n.
\]
This proves the first part. 

For the second part,  for all $p\ge 1$, by Burkholder's inequality we have
\[
  \esp\pp{ U_{0,n}^{2p}\mmid \calG\topp1,\calY\topp2}
\le C_p \pp{\frac{\wt K_n\topp2}{(a_n\topp2)^2}}^p \esp(X_\vv0^{*2p}\mid\calG\topp1).
\]
Note that $\esp\spp{{\wt K_n\topp2}/{(a_n\topp2)^2}}^p$ is uniformly bounded by Lemma~\ref{lem:K}. Similarly, 
\[
   \esp\pp{ U_{0,n,K}^{2p}\mmid\calG\topp1, \calY\topp2}
\le C_p \pp{\frac{\wt K_n\topp2}{(a_n\topp2)^2}}^p \esp(X_{\vv0,K}^{*2p}\mid\calG\topp1).
\]

For the third part, we have
\[
  \esp\pp{ |U_{0,n,K}-U_{0,n}|^{2p}\mmid \calG\topp1,\calY\topp2}
\le C_p \pp{\frac{\wt K_n\topp2}{(a_n\topp2)^2}}^p \esp(|X_{\vv0,K}^*-X_\vv0^*|^{2p}\mid\calG\topp1)\to 0,\
\]
as $K\to\infty$.
\end{proof}

\begin{Lem}\label{lem:cov_decay3}
For all $K\in\N,\varepsilon>0$, there exists integers $L_{K,\varepsilon}, N_{K,\varepsilon}$, such that
\[
\abs{
\cov(U_{0,n,K}^2,U_{j,n,K}^2)}<\varepsilon, \mfa j>L_{K,\varepsilon}, n>N_{K,\varepsilon}.
\]
\end{Lem}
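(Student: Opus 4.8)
The plan is to repeat, almost verbatim, the proof of Lemma~\ref{lem:cov_decay}, substituting Proposition~\ref{prop:Karlin1d} for Proposition~\ref{prop:generalizedHS} wherever the second direction enters. The first direction is still governed by a Hammond--Sheffield partition, so the events $R_{j,K}\topp1$ of~\eqref{eq:RjK}, built from the ancestral lines $\{A_i\topp1\}_{i\in\Z}$, are unchanged, and $\proba(R_{j,K}\topp1)\to1$ as $j\to\infty$ exactly as there, since the estimate only uses $\sum_{k\ge0}(q_k\topp1)^2<\infty$. As in~\eqref{eq:UU1} I would split $\esp(U_{0,n,K}^2U_{j,n,K}^2)$ over $R_{j,K}\topp1$ and its complement; the contribution of $(R_{j,K}\topp1)^c$ is bounded uniformly in $n$ by $C\,\proba((R_{j,K}\topp1)^c)^{1/2}$, using the Cauchy--Schwarz inequality twice and the uniform moment bounds of Lemma~\ref{lem:U}, and is therefore negligible for $j$ large.

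For the main term I would decouple precisely as in the proof of Lemma~\ref{lem:cov_decay}: introduce an independent copy $\{\wt A_i\topp1\}_{i\in\Z}$ of the first-direction ancestral lines, form the product partition $\wt G$ of $\Z\times\N$ from $\{\wt A_i\topp1\}$ and the \emph{same} second-direction Karlin partition $\{Y_j\topp2\}_{j\in\N}$, assign spins by the identical/alternating rule to obtain $\wt X_\vvj$ and the derived objects $\wt X_{\vvi,K}^*$, $\wt U_{j,n,K}$, and define $\wt R_{j,K}\topp1$ as in~\eqref{eq:tildeRjK}. Because $\proba(R_{j,K}\topp1)=\proba(\wt R_{j,K}\topp1)$ for $j\ge K$ and $(U_{0,n,K},U_{j,n,K})\mid R_{j,K}\topp1 \eqd (U_{0,n,K},\wt U_{j,n,K})\mid\wt R_{j,K}\topp1$, the computation~\eqref{eq:UU1}--\eqref{eq:UU3} carries over unchanged and reduces the lemma to showing $\limn\cov(U_{0,n,K}^2,\wt U_{0,n,K}^2)=0$.

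The only extra ingredient is a conditional central limit theorem for $U_{0,n,K}$. Condition on the partition $G_K\topp1$ induced on $\{-K+1,\dots,0\}$ by $\{A_i\topp1\}$, and write $\X_i=X_{(0,i),K}^*=X_{(0,i)}-\sum_{k=1}^Kp_k\topp1X_{(-k,i)}$. From the fact that the partition in the second direction is a Cartesian product of the fixed first-direction structure and the paintbox partition generated by $\{Y_j\topp2\}$, one checks that, conditionally on $G_K\topp1$, the sequence $\{\X_i\}_{i\in\N}$ is a generalized one-dimensional randomized Karlin model in the sense of Proposition~\ref{prop:Karlin1d}: if $i\not\sim i'$ in the second direction then $\X_i,\X_{i'}$ are independent and identically distributed, while within one box the $\X_i$ are the $\pm1$ multiples, alternating along the box, of one common bounded symmetric random variable. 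Hence Proposition~\ref{prop:Karlin1d}, in its conditional form (cf.\ Appendix~\ref{sec:appendix}), gives $U_{0,n,K}\mid G_K\topp1\weakto\sigma_K\cdot\calN(0,1)$ with $\sigma_K^2=\esp(X_{\vv0,K}^{*2}\mid G_K\topp1)$ bounded, and, running the Cram\'er--Wold argument used for~\eqref{eq:CCLT_joint} on $aU_{0,n,K}+b\wt U_{0,n,K}$ (which, given $G_K\topp1$ and $\wt G_K\topp1$, is again a bounded generalized Karlin model once one notes that $X_{(0,i),K}^*$ and $\wt X_{(0,i),K}^*$ are conditionally independent given the second-direction partition), the joint convergence $(U_{0,n,K},\wt U_{0,n,K})\weakto(\sigma_KZ,\wt\sigma_K\wt Z)$ follows, with $\sigma_K,\wt\sigma_K,Z,\wt Z$ mutually independent. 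The boundedness of $\sigma_K,\wt\sigma_K$ and the uniform integrability of $U_{0,n,K}^4,\wt U_{0,n,K}^4$ from Lemma~\ref{lem:U}(ii) then give $\cov(U_{0,n,K}^2,\wt U_{0,n,K}^2)\to\cov(\sigma_K^2Z^2,\wt\sigma_K^2\wt Z^2)=0$, and the lemma follows as in the proof of Lemma~\ref{lem:cov_decay}.

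The step I expect to cost the most care is the verification just described: that, conditionally on $G_K\topp1$, the process $\{X_{(0,i),K}^*\}_{i\in\N}$ is a generalized randomized Karlin model with bounded conditional marginal law, and that this property is preserved under the further conditioning used in the decoupling. Here the partition along the second direction is a paintbox partition rather than a random forest, so this bookkeeping is genuinely different from the one in Lemma~\ref{lem:cov_decay}, even though, once it is in place, the remainder of the argument is a transcription of that proof.
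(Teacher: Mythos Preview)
Your proposal is correct and follows essentially the same approach as the paper's own proof: the decoupling via $R_{j,K}\topp1$, $\wt R_{j,K}\topp1$ and the independent copy $\{\wt A_j\topp1\}_{j\in\Z}$ is identical, the reduction to $\limn\cov(U_{0,n,K}^2,\wt U_{0,n,K}^2)=0$ is the same, and the conditional CLT is obtained, as you say, by recognizing $\{X_{(0,i),K}^*\}_{i\in\N}$ given $G_K\topp1$ as a generalized randomized Karlin model and invoking Proposition~\ref{prop:Karlin1d} in place of Proposition~\ref{prop:generalizedHS}. The point you flag as requiring the most care---the verification that the conditional process is indeed a generalized Karlin model---is exactly the one the paper also singles out and checks explicitly (noting $\X_i=(-1)^{\ell+1}\X_j$ for $i\sim j$ and conditional independence otherwise).
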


\begin{proof}
To proceed we introduce a copy of $\{A_j\topp1\}_{j\in\Z}$, denoted by $\{\wt A_j\topp1\}_{j \in\Z}$, independent of the original model and we defined a new field $\{\wt X_\vvj\}_{\vvj\in\Z}$ based on the combined model involving $\{\wt A_j\topp1\}_{j\in\Z}$ and the same $\{Y_j\topp2\}_{j\in\N}$ as the original model. Then $\wt X_\vvj$, $\wt X_\vvj^*$, $\wt X_{\vvj,K}^*$, $\wt U_{j,n}$, and $\wt U_{j,n,K}$ are defined as the corresponding statistics of the combined model based on $\{\wt A_j\topp1\}_{j\in\Z}$ and  $\{Y_j\topp2\}_{j\in\N}$.  
In particular, these random variables are identically distributed as the variables $X_\vvj, X_\vvj^*, X_{\vvj,K}^*, U_{0,n}$ and $U_{0,n,K}$, respectively, and they are conditionally independent from them given $\calY\topp2$. 
As in the proof of Lemma~\ref{lem:cov_decay}, observe that
\[
 (U_{0,n,K},U_{j,n,K})\mid R_{j,K}\topp1 \eqd \left.\pp{U_{0,n,K},\wt U_{j,n,K}}\mmid \wt R_{j,K}\topp1,\right.
\]
for $R_{j,K}\topp1$ and $\wt R_{j,K}\topp1$ defined as in \eqref{eq:RjK} and \eqref{eq:tildeRjK}. Therefore we see that to prove the desired result it suffices to show that for all $K\ge 1$,
\eqnh
\lim_{n\to \infty}\cov\pp{U_{0,n,K}^2,\wt U_{0,n,K}^2}=0,
\eqne
corresponding to \eqref{eq:cov_decay2}.
Let $G_K\topp1$ be the random partition of $\{-K+1,\ldots,0\}$ induced by $G\topp1$ and note that
\[
 U_{0,n,K}\mid G_K\topp1 \eqd \left.\frac{1}{a_{n_2}\topp2} \sum_{i=1}^n  X_{(0,i),K}^*\mmid G_K\topp1 \right. \equiv
 \left.\frac{1}{a_{n_2}\topp2} \sum_{i=1}^n  \X_i\mmid G_K\topp1 \right.,
\]
where again we write $\X_i = X_{(0,i),K}^*$ for the sake of simplicity. 
Here, conditionally given $G_K\topp1$, $\{\X_i\}_{i\in\N}$ is a generalized one-dimensional randomized Karlin model. 
Indeed, for $i\not\sim j$ with respect to $\{Y_j\topp 2\}_{j\in\N}$, 
the random variables $\X_i$ and $\X_j$ are conditionally independent given $G_K\topp1$ and $\calY\topp2$, and for $i\sim j$, 
letting $\ell$ denote the number of integers in the component between $i$ and $j$ (say $i<j$ without loss of generality, so $\ell = \{k:i<k<j, Y_k\topp 2 = Y_i\topp 2 = Y_j\topp2\}$), we have $\X_i = (-1)^{\ell+1}\X_j$ given $G_K\topp 1$ and $\calY\topp2$.

Similarly, let $\wt G_K\topp1$ be the random partition of $\{-K+1,\ldots,0\}$ induced by $\wt G\topp1$. 
Then for all $a,b\in\R$,
\[
\left. a U_{0,n,K}+b\wt U_{0,n,K}\mmid G_K\topp1, \wt G_K\topp1\right.
\eqd \left.\frac{1}{a_{n_2}\topp2} \sum_{i=1}^n \pp{a X_{(0,i),K}^*+b\wt X_{(0,i),K}^*}\mmid G_K\topp1, \wt G_K\topp1\right.,
\]
where
\[
\ccbb{\wb \X_i}_{i\in\N} \equiv \ccbb{a X_{(0,i),K}^*+b\wt X_{(0,i),K}^*}_{i\in\N},
\]
given $G_K\topp 1$ and $\wt G_K\topp 1$, this time is a generalized one-dimensional randomized Karlin model. Since $\{\wb\X_i\}_{i\in\N}$ has  bounded and centered marginal distribution, we can thus apply Proposition \ref{prop:Karlin1d} for generalized one-dimensional randomized Karlin model. The variance of the limit normal distribution is then
\[\esp\pp{ \wb \X_1^2\mmid G_K\topp1,\wt G_K\topp1} = 
a^2\var\pp{X_{\vv0,K}^*\mmid G_K\topp1}+b^2\var\pp{\wt X_{\vv0,K}^*\mmid \wt G_K\topp1} 
= a^2\sigma_K + b^2\wt \sigma_K.
\]
It follows that, by the Cram\'er--Wold device, 
\[
\pp{U_{0,n,K},\wt U_{0,n,K}} \weakto \pp{\sigma_K Z, \wt \sigma_K \wt Z},
\]
where $Z$ and $\wt Z$ are standard normal random variables and the four random variables are independent.
To conclude, we deduce that $\lim_{n\to\infty} \cov(U_{0,n,K}^2,\wt U_{0,n,K}^2)=\cov(\sigma_K^2 Z^2, \wt \sigma_K^2 \wt Z^2)=0$, which imply the desired result.
\end{proof}

\begin{proof}[Proof of tightness]
Again, we proceed using Bickel--Wichura's criterion \citep{bickel71convergence}. Observe that for all $n\in\N$, $\{U_{j,n}\}_{j\in\Z}$ is a martingale-difference sequence with respect to $\{\calF_{j}\topp1\}_{j\in\Z}$. By Burkholder's inequality, for all $p\ge 1$, for all $\vvn,\vvm\in\N^2$,
\begin{align*}
 \esp\pp{\frac{S_\vvm}{b_{n_1}\topp1 a_{n_2}\topp2}}^{2p}
&\le C_p\pp{\frac{a_{m_2}\topp2}{b_{n_1}\topp1 a_{n_2}\topp2}}^{2p} \esp\pp{\sum_{j\in\Z} (b_{m_1}\topp1)^2 U_{j,m_2}^2}^{p}\\
&\le C_p\pp{\frac{b_{m_1}\topp1}{b_{n_1}\topp1 }}^{2p}\pp{\frac{a_{m_2}\topp2}{a_{n_2}\topp2}}^{2p}
\esp U_{0,n}^{2p}.
\end{align*}
Using that $\esp U_{0,n}^{2p}$ is bounded uniformly in $n$ and that 
\[
 \pp{\frac{b_{m_1}\topp1}{b_{n_1}\topp1 }}^{2p}\pp{\frac{a_{m_2}\topp2}{a_{n_2}\topp2}}^{2p}\sim \frac{m_1^{H_1}L_1^{-1}(m_1)}{n_1^{H_1}L_1^{-1}(n_1)}\frac{m_2^{H_2}L_2(m_2)}{n_2^{H_2}L_2(n_2)},\mmas \vvn\to\infty,
\]
we can conclude as for the other models, dealing with the slowly varying functions by using Potter's bound. 
\end{proof}

\begin{Rem}\label{rem:high_dimension}
As we have seen, the proof follows the same structure as for the two-dimensional Hammond--Sheffield model. In fact, our models have their natural generalizations to high dimensions ($d\ge 2$), and the proof will follow the same strategy. The generalization of the model to high dimensions, based on independent random partitions and assignment rules in different directions, is  intuitively obvious. However, it is notationally heavy to introduce. We only briefly explain how the proof would go. If in all directions the random partition is the same as the one in the one-dimensional Karlin model, then the same proof as Theorem \ref{thm:1}, by first conditioning on the partition, shall work. If at least in one direction, say the first, the random partition and assignment rule are the ones of the one-dimensional Hammond--Sheffield model,  then the same strategy as in two-dimensional Hammond--Sheffield model and the combined model shall work, by first writing
\[
\frac1{b_\vvn} S_{n_1,\dots,n_d} = \frac1{b_{n_1}\topp1}\sum_{j_1\in\Z}b_{j_1,n_1}\topp1 U_{j_1,n_2,\dots,n_d},
\]
with $\{U_{j_1,n_2,\dots,n_d}\}_{j_1\in\Z}$ a stationary sequence of martingale differences. The analysis of this martingale-difference sequence shall need results for generalized $(d-1)$-dimensional models (to be defined properly first). To complete the details of this strategy  would require an induction argument. 
\end{Rem}


\appendix
\section{Conditional convergence}\label{sec:appendix}
We follow the notations of \citet[Chapter 5]{kallenberg97foundations}. Let $(\Omega,\calA,\proba)$ be a probability space, $(S,\calS)$ be a Borel space and $(T,\calT)$ be a measurable space. Let $\xi,\eta$ be two random elements in $S,T$ respectively. A regular conditional distribution of $\xi$ given $\eta$ is defined as a random measure $\nu$ of the form
\[
\nu(\eta,B) = \proba(\xi\in B\mid\sigma(\eta)), \mbox{ almost surely, } B\in\calS,
\]
where $\nu$ is a probability kernel from $(T,\calT)$ to $(S,\calS)$: $\nu(\cdot,B)$ is $\calT$-measurable for all $B\in\calS$, and $\nu(t,\cdot)$ is a probability measure on $(S,\calS)$ for all $t\in T$. Under the previous regularity assumption on the space $(S,\calS)$ and $(T,\calT)$, such a probability kernel $\nu$ exists, and is unique almost everywhere $\proba\circ\eta\inv$ \citep[Theorem 5.3]{kallenberg97foundations}. Furthermore, for all measurable function $f$ on $(S\times T,\calS\times\calT)$, with $\esp |f(\xi,\eta)|<\infty$, 
\[
\esp(f(\xi,\eta)\mid\sigma(\eta)) = \int \nu(\eta,d s)f(s,\eta), \mbox{ almost surely.}
\]
See for example \citep[Theorem 5.4]{kallenberg97foundations}.

Some of our results are in the form of  conditional (functional) limit theorems for the random field given underlying the random partition. The random partition, denoted by $\eta$ here, and the random field $\{X_\vvi\}_{\vvi\in\Z^d}, d\in\N$ are defined on a common probability space $(\Omega,\calA,\proba)$. Let $\indn Z$ be a sequence of real-valued random variables (the normalized partial sum with appropriate normalization) in the same probability space.
Then, let $\nu_n(\eta,\cdot)$ denote the regular conditional distribution of $Z_n$ given $\eta$.  
With $\calG = \sigma(\eta)$, we write for some $\calG$-measurable random variable $V$ (possibly a constant), 
\[
Z_n\mid\calG\weakto V\cdot \calN(0,1), 
\]
if $\nu_n(\eta(\omega),\cdot)$ as $n\to\infty$ converges to the standard normal distribution multiplied by $V(\omega)$ almost surely. That is, for all bounded continuous functions $h:\R\to\R$, 
\[
\limn \int h(z)\nu_n(\eta(\omega),dz) = \int h(z)\frac1{\sqrt{2\pi}}e^{-z^2/2}dz\cdot V(\omega), \mbox{ almost surely.}
\]
In this case we say that the conditional central limit theorem holds.

The conditional functional central limit theorem is interpreted in a similar way. Let $\calZ = \{Z(t)\}_{t\in T}$ and $\calZ_n = \{Z_n(t)\}_{t\in T}$, $n\in\N$ with $T = [0,1]^d$, $d\in\N$, be real-valued  stochastic processes in $D(T)$ equipped with the Skorohod topology, defined in the same probability space. We write
\[
\ccbb{\calZ_n(t)}_{t\in T}\mid \calG\weakto V\cdot \ccbb{\calZ(t)}_{t\in T},
\]
if, 
letting $\nu_n(\eta,\cdot)$ denote this time the regular conditional distribution of $\calZ_n$ given $\eta$ and $\mu_\calZ$ denote the distribution of $\calZ$, both as probability measures on $D(T)$, 
for all  bounded and continuous function $h$ from $D(T)$ to $\R$,
\[
\limn \int_{D(T)} h(\zeta)\nu_n(\eta(\omega),d\zeta) = \int_{D(T)} h(\zeta)\mu_\calZ(d\zeta)\cdot V(\omega)\; \mbox{ almost surely.}
\]


\def\cprime{$'$} \def\polhk#1{\setbox0=\hbox{#1}{\ooalign{\hidewidth
  \lower1.5ex\hbox{`}\hidewidth\crcr\unhbox0}}}
  \def\polhk#1{\setbox0=\hbox{#1}{\ooalign{\hidewidth
  \lower1.5ex\hbox{`}\hidewidth\crcr\unhbox0}}}

\end{document}